\newtheorem{thm}{Theorem}[section]
\newtheorem{lem}[thm]{Lemma}
\newtheorem{cor}[thm]{Corollary}
\newtheorem{prop}[thm]{Proposition}
\theoremstyle{definition}
\newtheorem{example}[thm]{Example}
\newtheorem{defn}[thm]{Definition}
\newtheorem{rem}[thm]{Remark}
\newtheorem{rems}[thm]{Remarks}
\numberwithin{equation}{thm}
\def\sA{{\mathcal A}}
\def\sB{{\mathcal B}}
\def\sD{{\mathcal D}}
\def\sE{{\mathcal E}}
\def\sF{{\mathcal F}} 
\def\sH{{\mathcal H}}
\def\sO{{\mathcal O}}
\def\sP{{\mathcal P}}
\def\sR{{\mathcal R}}
\def\sS{{\mathcal S}}
\def\wW{{{W}}}
\def\sX{{\mathcal X}}
\def\sZ{{\mathcal Z}}
\def\dW{{\check{W}}}\def\dS{{\check{S}}}
\def\dl{{\check{\ell}}}
\def\bfc{{\bold c}}
\def\bfr{{\bold r}}
\def\fkd{{\frak d}}
\def\fkf{{\frak f}}
\def\BC{{\Bbb C}}
\def\cBA{{\check{\mathbb A}}}
\def\cBB{{\check{\mathbb B}}}
\def\fS{{\frak S}}
\def\partial{\delta}
\def\al{\alpha} 
\def\be{\beta} 
\def\de{\delta}
\def\ep{{\epsilon}}
\def\ga{\gamma}
\def\la{\lambda}
\def\ka{\kappa} 
\def\th{\theta} 
\def\ro{\rho} 
\def\La{\Lambda}
\def\Om{\Omega}
\def\si{\sigma}
\def\oW{{\bar W}}
\def\wtmu{{\widetilde\mu}}
\def\End{\text{\rm End}}
\def\dim{\text{\rm dim\,}}
\def\det{{\text{\rm\,det\,}}}
\def\ggi#1{\left[\!\left[#1\right]\!\right]}
\def\lr#1{\langle#1\rangle}
\def\up{\upsilon}
\def\vsg{\varsigma}
\def\chW{\check W}
\def\wla{{\widehat\lambda}}
\def\wmu{{\widehat\mu}}
\def\wnu{{\widehat\nu}}
\def\wga{{\widehat\ga}}
\def\GL{{\text{\rm GL}}}
\def\O{{\text{\rm O}}}
\def\SO{{\text{\rm SO}}}
\def\WB{W^{\textsc{b}}}
\def\WD{W^{\textsc{d}}}
\def\scb{\textsc{b}}
\def\scd{\textsc{d}}
\def\bsq{{\boldsymbol q}}
\def\bbb{\overset{\circ\!\circ\!\circ}\Xi}
\def\hhh{\overset{\bullet}\Xi}
\def\hhhd{{\overset{\bullet}\Xi\centerdot}}
\def\bbh{\overset{\circ\!\circ\!\bullet}\Xi}
\def\hbb{\overset{\bullet\!\circ\!\circ}\Xi}
\def\hbh{\overset{\bullet\!\circ\!\bullet}\Xi}
\def\sbs{\overset{\star\circ*}\Xi}
\def\cW{{\check W}}
\def\cG{{\check G}}
\def\cS{{\check S}}
\def\cR{{\check R}}
\def\tmu{\widetilde\mu}
\def\tla{\widetilde\la}
\def\tillam{\widetilde\la}
\def\Stab{\text{\rm Stab}}
\def\fkm{{\mathfrak m}}
\def\fkp{{\mathfrak p}}
\def\sfm{{\mathsf m}}
\def\lamz{{\lambda^\bullet}}
\def\labt{{\lambda^\bullet}}
\def\cx{{\check x}}
\def\csH{{\check\sH}}
\def\op{{\text{\rm op}}}
\def\fkF{\mathfrak F}
\def\sfJ{{\mathsf J}}
\def\oo{{\bullet}}
\def\td{{\widehat d}}
\def\cH{\check H}
\newcommand{\Lambdad}{\Lambda^\scd(n,r)}
\def\csD{\check\sD}
\newcommand{\sgnura}{\text{\rm sgn}(\urcm)}
\newcommand{\ura}{A^{\llcorner}}
\newcommand{\ur}[1]{{#1}^{\text{\normalsize $\llcorner$}}}
\newcommand{\pap}[1]{^{+}\hspace{-0.8mm} {#1}\hspace{-0.5mm} ^{+}}
\newcommand{\map}[1]{^{-}\hspace{-1.5mm} {#1}\hspace{-0.5mm} ^{+}}
\newcommand{\pam}[1]{^{+}\hspace{-0.5mm} {#1}\hspace{-0.5mm} ^{-}}
\newcommand{\mam}[1]{^{-}\hspace{-1.3mm} {#1}\hspace{-0.5mm} ^{-}}
\newcommand{\paep}[1]{^{+}\hspace{-0.8mm} {#1} ^{\ep}}
\newcommand{\pamep}[1]{^{+}\hspace{-0.8mm} {#1} ^{-\ep}}
\newcommand{\maep}[1]{^{-}\hspace{-0.8mm} {#1}^{\ep}}
\newcommand{\mamep}[1]{^{-}\hspace{-1.2mm} {#1}^{-\ep}}
\newcommand{\papsgn}{^{+}\hspace{-0.5mm}{(\apl)}^{\epsilon}}
\newcommand{\mapmsgn}{^{-}\hspace{-0.5mm} {(\apl)}\hspace{-.5mm}^{-\epsilon}}
\newcommand{\apl}{{}_h\!A_{p}}
\newcommand{\apu}{{}_h\!A_{\overline{p}}}
\def\dnr{\csD(n,r)}
\newcommand{\xid}{\check\Xi(n,r)}
\newcommand{\sj}{S^{\jmath}(n,r)}
\newcommand{\lamp}{\lambda^+}
\newcommand{\lamm}{\lambda^-}
\newcommand{\mup}{\mu^+}
\newcommand{\mum}{\mu^-}
\newcommand{\muz}{\mu^\bullet}
\def\sfF{\mathsf F}
\def\urcm{A^{\text{\normalsize $\llcorner$}}}
\def\ulcm{A^{\text{\normalsize $\lrcorner$}}}
\def\llcm{A_{\text{\normalsize$\urcorner$}}}
\def\lrcm{A_{\text{\normalsize$\ulcorner$}}}
\def\ro{\text{\rm ro}}\def\co{\text{\rm co}}
\def\sfFs{\sfF^{s}}
\def\csO{\check\sO}
\def\Gr{\text{\rm iGr}}
\newcommand{\e}[1]{e_{#1}}
\newcommand{\gba}{g_{h,A,p}}
\newcommand{\gca}{g'_{_{h,  A,\overline{p}}}}
\newcommand{\epsiaepsi}[3]{^{#2}\hspace{-1mm} {#1}\hspace{-0.2mm} ^{#3}}
\newcommand{\czero}{\dot C}
\newcommand{\bzero}{\dot B}
\newcommand{\azero}{\dot A}
\newcommand{\apzero}{ _h\!{\dot A}_{p}}
\def\rw{\text{\rm rw}}
\def\cw{\text{\rm cw}}
\def\diag{\text{\rm diag}}
\begin{document}
\title[The $q$-Schur algebras in type $D$]%
{The $q$-Schur algebras in type $D$, I:\\
Fundamental multiplication formulas}
\author{Jie Du$^*$}

\address{School of Mathematics and Statistics, University of New South Wales,
Sydney 2052, Australia}
\email{j.du@@unsw.edu.au}
\author{Yiqiang Li$^{\star}$}

\address{Department of Mathematics, University at Buffalo, SUNY, Buffalo, New York 14260}
\email{yiqiang@buffalo.edu}
\author{Zhaozhao Zhao}
\address{School of Mathematical Sciences, Tongji University, Shanghai 200092, China}
\email{1810878@tongji.edu.cn}

\date{\today}
\subjclass {16T20, 17B37, 20C08, 20C33, 20G43}
\keywords{Hecke algebra, $q$-Schur algebra, partial flag variety, orthogonal group, special orthogonal group}

\thanks{$^*$Supported partially by the UNSW Science FRG (J. Du) and ARC DP220102861 (Daniel Chan)}
\thanks{$^\star$Supported partially by NSF grant DMS1801915 and Simons Travel Support for Mathematicians.}

\begin{abstract} By embedding the Hecke algebra $\cH_\bsq$ of type $D$ into the Hecke algebra $H_{\bsq,1}$ of type $B$ with unequal parameters $(\bsq,1)$, the $\bsq$-Schur algebras $S^\kappa_\bsq(n,r)$ of type $D$ is naturally defined as the endomorphism algebra of the tensor space with the $\cH_\bsq$-action restricted from the $H_{\bsq,1}$-action that defines the $(\bsq,1)$-Schur algebra $S^\jmath_{\bsq,1}(n,r)$ of type $B$. 
We investigate the algebras $S^\jmath_{\bsq,1}(n,r)$ and $S^\kappa_\bsq(n,r)$ both algebraically and geometrically and describe their natural bases, dimension formulas and weight idempotents. Most importantly, we use the geometrically derived two sets of the fundamental multiplication formulas in $S^\jmath_{\bsq,1}(n,r)$ to derive multi-sets (9 sets in total!) of the fundamental multiplication formulas in $S^\kappa_\bsq(n,r)$.
\end{abstract}

\maketitle

\tableofcontents

\section{Introduction}
The $q$-Schur algebras of type $A$ originated from two sources. One originates from  quantum group theory and the other from  representation theory of finite general linear groups. In \cite{Ji}, Jimbo used $q$-Schur algebras to describe the (quantum) Schur--Weyl duality, while in \cite{DJ} Dipper--James used them to investigate representations of finite general linear groups in non-defining characteristic. Thus, $q$-Schur algebras play a central role in connecting representations of quantum linear groups with those of finite general linear groups. 

Although one naturally expected thirty years ago the existence of such a connection for other finite classical groups, the Schur--Weyl--Brauer duality and its quantum version for classical types other than $A$ cannot be used to establish such a connection. This is because the Brauer algebras and their quantum analogs, the so-called BMW algebras (see \cite[Th.~10.2.5]{CP} and the references therein), have a very different representation theory from those of Hecke algebras of type $B/C/D$. Thus, 
we hypothesized the existence of a `Schur-Weyl-Hecke duality'  for other classical types. This puzzle has finally been solved  after thirty years!

In their study of quantum symmetric pairs and their canonical basis theory, Bao--Wang \cite{BW} discovered a new Schur--Weyl duality which is called here the Schur--Weyl--Hecke duality. The Schur--Weyl--Hecke duality of type $B/C$ involves the twin $i$-quantum groups ${\bf U}^\jmath(n)$ and ${\bf U}^\imath(n)$ and the Hecke algebras of types $B$ and $C$. These $i$-quantum groups appear in the quantum symmetric pairs of type AIII with no black nodes in their Satake diagrams. The $q$-Schur algebras involved have numerous applications; see, e.g., \cite{BKLW, DW1, LNX, Li, LW, MW}. Note that, for the type $D$ case, 
a commuting action of the $i$-quantum groups ${\bf U}^\jmath (n)$, ${\bf U}^\imath (n)$ and Hecke algebras of type $D$ on tensor spaces was first observed by Ehrig and Stroppel~\cite{ES} in their study of 
  maximal parabolic category $\sO$/isotropic Grassmannians of $\mathfrak{so}_{2n}$.   
  See also Bao's work~\cite{Bao}.
  It is natural to ask if there is a Schur--Weyl--Hecke duality in type $D$?

In \cite{FL}, Z. Fan and the second-named author investigated a natural geometric setting of the $q$-Schur algebras of type $D$ (see \cite[Appendix]{LL} for the associated algebraic setting) and their limiting algebra $\mathcal K$, following the work \cite{BKLW}. Moreover, certain $i$-quantum groups are also introduced so that the type $D$ $q$-Schur algebras are their homomorphic images. However, it has been noticed that multiplication formulas in the type $D$ $q$-Schur algebras discussed in \cite[\S4]{FL} are incomplete (see \cite[Prop.~4.3.2]{FL}). In other words, more cases need to be considered in the determination of the natural basis when the algebra is defined by $q$-permutation modules of the type $D$ Hecke algebra (see \cite[Appendix]{LL}). Hence, the dimension formulas need to be revised and multiplication formulas about these new basis elements should be computed as well. 

The aim of the paper is to fill these gaps in \cite{FL} and \cite{LL} and to set up a foundation for a possible solution to the aforementioned question. It is interesting to note that, on the way to achieve the goal, we discovered more. 
The strategy here is new as we develop two parallel theories: a type $B$ theory in unequal parameters and a type $D$ theory. We then use a splitting process via either weight idempotents or geometric splitting to link the two. 

We now describe it in more details. We start with the algebraic approach developed in \cite{DS2}, where the Hecke algebra $\cH_\bsq$ of type $D_r$ is embedded in the Hecke algebra $H_{\bsq,1}$ of type $B_r$ in unequal parameters. Interestingly, there is a geometric construction for $H_{q,1}$, where $q$ is a prime power,  using complete isotropic flag variety over the even-order orthogonal group $\O_{2r}(q)$. Then, we may present the $(\bsq,1)$-Schur algebra $S_{\bsq,1}^\jmath(n,r)$ of type $B$ both algebraically, using $q$-permutation modules, and geometrically, using pairs of isotropic partial flag variety over $\O_{2r}(q)$ and their associated convolution algebra (see Theorem \ref{geosettingB}).  In this way, we may define the $\bsq$-Schur algebra $S_{\bsq}^\kappa(n,r)$ of type $D$ by restriction from $H_{\bsq,1}$ to $\cH_\bsq$ and identify it with the algebra $S^\scd_\bsq(n,r)$ defined in \cite{LL} (see Proposition \ref{kappa-D}).  Next, using a criterion for splitting an $\O_{2r}(q)$-orbit into two $\SO_{2r}(q)$-orbits,
a new parametrization of $\SO_{2r}(q)$-orbits is obtained (see \eqref{SOorbit}). We then introduce the natural basis for $S_\bsq^\scd(n,r)$, indexed by (labelled) double cosets in the type $D$ Weyl group, and the defining orbital basis for the convolution algebra over $\SO_{2r}(q)$, indexed by $\SO_{2r}(q)$-orbits. Theorem \ref{geosettingD} gives a geometric setting for $S_q^\scd(n,r)$ via  a basis correspondence.
This precise basis correspondence (or identification) in loc. cit. is the key to finding  the fundamental multiplication formulas in these generic $q$-Schur algebras of type $D$ (Theorems \ref{thmhhh}--\ref{thbbh} and \ref{thmchhh}--\ref{thchbb}) through a splitting process of the corresponding formulas of type $B$ given in Theorem \ref{sjcon}. 

More precisely, we summarize the fundamental multiplication formulas in the paper as follows. (See notational list below.)

\begin{thm} 
Let $S_{\bsq,1}^\jmath(n,r)$ and $S_{\bsq}^\kappa(n,r)$ be the $\bsq$-Schur algebras of type $B$ and $D$, respectively, over the polynomial ring $\mathbb Z[\bsq]$
with their respective bases $\{e_A\mid  A \in \Xi\}$ and $\{\phi_\cBA\mid \cBA\in\check \Xi\}$. Suppose $B,C\in\Xi$ such that $B-E_{h,h+1}^\th$ and $C-E_{h+1,h}^\th$ $(h\in[1,n])$ are diagonal and, in $S_{\bsq,1}^\jmath(n,r)$,
$$e_Be_A=\de_{\co(B),\ro(A)}\sum_{1\leq p\leq N}g_{h,A,p}\e{\apl},\quad e_Ce_A=\delta_{\co(C),\ro(A)}\sum \limits_{1 \leq p \leq N} \gca e_{\apu},$$
for some $g_{h,A,p},\gca\in\mathbb Z[\bsq]$.
(See \eqref{hp} for the notation $\apl,\apu$.) If $\cBA$, $\cBB$, and $\check\BC$ in $\check\Xi$ are obtained by a certain splitting process from $A,B,$ and $C$, respectively, then the structure constants for $S_{\bsq}^\kappa(n,r)$ appearing in $\phi_{\cBB}\phi_\cBA$ (resp., $\phi_{\check\BC}\phi_\cBA$) are some of the $\gba$ (resp. $\gca$ or $\frac12\gca$).
\end{thm}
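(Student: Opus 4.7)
The plan is to derive these type $D$ multiplication formulas by \emph{splitting} the known type $B$ formulas through the basis correspondence provided by Theorem \ref{geosettingD}. The geometric mechanism is that each $\O_{2r}(q)$-orbit on a pair of isotropic partial flag varieties either stays irreducible or breaks into two $\SO_{2r}(q)$-orbits, according to the determinantal criterion recorded in \eqref{SOorbit}; correspondingly, each basis element $e_A$ of $S^\jmath_{\bsq,1}(n,r)$ lifts to either one or two basis elements $\phi_{\cBA}$ of $S^\kappa_\bsq(n,r)$.

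The first step is to transfer the given identity $e_B e_A = \delta_{\co(B),\ro(A)} \sum_p g_{h,A,p} e_{\apl}$ (and its companion for $e_C e_A$) from $S^\jmath_{\bsq,1}(n,r)$ into $S^\kappa_\bsq(n,r)$. Via the basis correspondence, each $e_X$ on either side is replaced by the sum of the $\phi$-basis elements indexed by the $\SO_{2r}(q)$-orbits refining the $\O_{2r}(q)$-orbit of $X$, with a single summand when that orbit does not split. One then extracts the equation for a given pair $(\cBB,\cBA)$ by picking out the appropriate $\mathbb Z/2$-isotypic component under the action of $\O_{2r}(q)/\SO_{2r}(q)$.

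The second step is a case analysis sorting by which of the three orbits---those of $A$, $B$, and $\apl$---splits; the nine resulting patterns correspond precisely to the nine formulas in Theorems \ref{thmhhh}--\ref{thbbh} and \ref{thmchhh}--\ref{thchbb}. In the cases where neither source nor target splits, the type $D$ coefficient coincides with the type $B$ coefficient $\gba$ (resp.\ $\gca$). In the cases where the target $\apl$ splits into two $\SO_{2r}$-orbits while $B$ (hence $\cBB$) does not, the single type $B$ contribution must distribute equally between the two refined basis vectors, producing the factor of $\tfrac12$; in cases where $\cBA$ and $\cBB$ both split compatibly, the entire $\gba$ is retained.

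The main obstacle will be to justify that the distribution really is symmetric in the ambiguous cases, i.e.\ that no nontrivial $\mathbb Z/2$-isotypic component appears when the source data does not detect the splitting. This rests on showing that the involution induced by an element of $\O_{2r}(q)\setminus\SO_{2r}(q)$ genuinely swaps the two $\SO_{2r}(q)$-orbits inside a split $\O_{2r}(q)$-orbit and commutes with the convolution products under consideration, forcing equal multiplicities. Once this symmetry is established, each of the nine cases reduces to reading off the appropriate piece of the type $B$ formula from Theorem \ref{sjcon}, plus routine combinatorial bookkeeping on the matrices $\apl$, $\apu$ and their splittings.
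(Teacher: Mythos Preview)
Your overall strategy---embedding $S^\jmath_{\bsq,1}(n,r)$ into $S^\kappa_\bsq(n,r)$ via $e_A\mapsto \phi_{{}^+\!A^\ep}+\phi_{{}^-\!A^{-\ep}}$ (or $\phi_{\dot A}$) and then peeling off the pieces---is exactly the paper's route. But two points in your execution are off.

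First, you have the $\tfrac12$ mechanism backwards. In the paper the halving occurs precisely when the \emph{source} $A$ splits while the \emph{target} ${}_nA_{\overline{n+1}}$ does \emph{not} (its central entry becomes~$2$); see the $h=n$, $p=n+1$ term in Theorems~\ref{thmchhh}(2) and~\ref{thchbb}. In that situation $e_A=\phi_{{}^+\!A^\ep}+\phi_{{}^-\!A^{-\ep}}$ contributes to a single non-split $\phi_{{}_n\dot A_{\overline{n+1}}}$, and the issue is how the coefficient $g'_{n,A,\overline{n+1}}$ apportions between the two products $\phi_{\dot C}\phi_{{}^+\!A^\ep}$ and $\phi_{\dot C}\phi_{{}^-\!A^{-\ep}}$. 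Your involution heuristic is in the right spirit, but the paper actually proves the equal split by an explicit bijection between two sets of intermediate flags (Claim~2 in the proof of Theorem~\ref{thmchhh}); a bare appeal to ``swapping orbits and commuting with convolution'' would need to be made precise at the level of the counting set $Z_{C,A,A'}$, not just at the level of basis elements.

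Second, ``picking out the appropriate $\mathbb Z/2$-isotypic component'' is not enough to separate terms in the hardest cases. When $A\in\hbh$, both $\phi_{{}^+\!A^\ep}$ and $\phi_{{}^-\!A^{-\ep}}$ have the \emph{same} row and column weights in $\La^\scd(n,r)$, so neither weight idempotents nor a global $\mathbb Z/2$-action distinguishes them. The paper handles this by a geometric tracking argument (the Claim in the proof of Theorem~\ref{thmhhh}(2)): for $(E,F')\in\csO({}^{\ep_1}\!A^{\ep_2})$ and $(F,E)$ in the orbit of $B$, one determines by hand which refined orbit $(F,F')$ lands in, according to whether $p\le n$ or $p\ge n+2$. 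This is a genuine computation with the invariant $d_{F,F'}$ of \eqref{d_FF'}, not an isotypic decomposition; your proposal does not yet contain a substitute for it.
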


We organize the paper as follows. In Section 2, we first introduce,  following \cite[(3.3.2)(2)]{DS1} and \cite[Rem.~2.6(1)]{DS2}, the algebraic definition for the type $D$ $q$-Schur algebras $S^\kappa_{\bsq}(n,r)$ associated with (or covering) the type $B$ algebra $S_{\bsq,1}^\jmath(n,r)$ in unequal parameters. We then reinterpret $S^\kappa_{\bsq}(n,r)$ as the algebra $S^\scd_\bsq(n,r)$ defined by Heck algebras of type $D$ and their $q$-permutation modules (see \cite[(A.3.3)]{LL}). In Section 3, we present the geometric definition of $S^\jmath_{\bsq,1}(n,r)$, largely following \cite{BKLW, FL}. In Section 4, we develop a precise labelling for those $\O_{2r}(q)$-orbits which split into two $\SO_{2r}(q)$-orbits. This allows us to label the natural basis for $S^\scd_\bsq(n,r)$ both algebraically in terms of double cosets and geometrically in terms of $\SO_{2r}(q)$-orbits. Finally, in Section 5, the multiplication formulas in the $\bsq$-Schur algebra $S_{\bsq,1}^\jmath(n,r)$ of type $B$ (with unequal parameters) will be derived geometrically via convolution products as in \cite{BKLW}. In the last Sections 6 and 7, fundamental multiplication formulas for the $\bsq$-Schur algebra of type $D$ are derived using either weight idempotents or orbit splitting techniques. This 
includes especially those missing ones in \cite{FL} (see Remarks \ref{error} and \ref{error2}).

In a forthcoming paper, we will use the multiplication formulas discovered here to seek the existence of the Schur--Weyl--Hecke duality in type $D$.

\smallskip
\noindent
{\bf Some notation hints:} 
There are two parallel notations for types $B$ and $D$. The following table lists some of them for reference. Let $\ep\in\{+,-\}$.
\begin{center}
\begin{tabular}{|c|c|c|c|c|c|c|c|c|c|}\hline
&(1)&(2)&(3)&(4)&(5)&(6)&(7)&(8)&(9)\\\hline
$B$&$W$&$\La(n+1,r)$&$H_{\bsq,1}$&$S^\scb_{\bsq,1}(n,r)$&$e_A$&$\Xi=\Xi_{N,2r}=\{A\}=\hhh\sqcup\overset\circ\Xi$&$G(q)$&$\sO_A$&$\ro,\co$\\\hline
$D$&$\cW$&$\La^\scd(n,r)$&$\cH_\bsq$&$S^\scd_\bsq(n,r)$&$\phi_\cBA$&$\check\Xi=\check \Xi(n,r)=\{\cBA\}=\hhhd\sqcup\overset\circ\Xi_+\sqcup\overset\circ\Xi_-
$&$\cG(q)$&$\csO(\cBA)$&$\rw,\cw$\\\hline
\end{tabular}
\end{center}
\begin{multicols}{2}
where 
\begin{itemize}
\item[(1)] Weyl groups $W=W^\scb$ and $\cW=W^\scd$; \item[(2)] Index sets of orbits of parabolic subgroups\\ or isotropic flags in $\sX$; \item[(3)] Hecke algebras  in unequal/equal parameter; \item[(4)] $\bsq$-Schur algebras; \item[(5)] Natural bases for $\bsq$-Schur algebras;
\item[(6)] Index sets for natural bases; \item[(7)] Finite orthogonal/special orthogonal groups $G(q)=\O_{2r}(q)/\cG(q)=\SO_{2r}(q)$; \item[(8)] $G(q)$-/$\cG(q)$-orbits in $\sX\times\sX$, set of pairs of $n$-step isotropic flags; \item[(9)] Weight functions defined in \eqref{roco} and \eqref{rwcw}.
\end{itemize}
\end{multicols}

Throughout the paper, let $n$ be a positive integer and let $N=2n+1$.

Let $\sA:=\mathbb Z[\bsq]$ be the integral polynomial ring in indeterminate $\bsq$. For any integer $m\geq0$, define the Gaussian integer $\ggi{m}=\frac{\bsq^m-1}{\bsq-1}$. 

For integers $a<b$, let
$[a,b]:=\{x\in\mathbb Z\mid a\leq x\leq b\}$.

\subsection{Acknowledgements}

The second-named author is grateful to Chun-Ju Lai for pointing out a gap in  the parametrization of $\SO_{2r}(q)$-orbits in~\cite{FL}; see \cite[(A.4.2)]{LL}, Remark~\ref{error}, and (\ref{SOorbit}).

The authors express their gratitude to the University of New South Wales and the University at Buffalo for their hospitality during the preparation of this paper. Additionally, the third author acknowledges the financial support provided by the China Scholarship Council.

We also thank Professor Weiqiang Wang for his comments on an early version of the paper. 

We sincerely thank the referee for  valuable comments and  suggestions.

\section{The $\bsq$-Schur algebras of type $D$: an algebraic setting}

We start with the definitions of the Weyl groups, Hecke algebras, and $\bsq$-Schur algebres of type $B/D$. By
identifying $W$ and its parabolic subroups with certain fixed-point subgroups of the symmetric group $\fS_{2r}$, we then index the natural (or double coset) basis for the $\bsq$-Schur algebra $S_{\bsq,1}^\scb(n,r)$ of type $B$ by a certain matrix set $\Xi_{N,2r}$. This matrix set will soon be used in Section 3 to label the $\O_{2r}(q)$-orbits in $\sX\times \sX$ for an $n$-step isotropic flag variety $\sX$.      


\subsection{The Weyl groups $W/\cW$, Hecke algebras $H_{\bsq,1}/\cH_\bsq$, and $\bsq$-Schur algebras $S^\jmath_{\bsq,1}(n,r)/S^\kappa_\bsq(n,r)$} Let $W=\WB=W(B_r)$ ($r\geq2$) and $ \cW=\WD=W(D_r)$ ($r\geq 4$) denote the Weyl groups of type $B_r$ and $D_r$ associated with the Dynkin diagrams:

\medskip
 \begin{center}
\begin{tikzpicture}[scale=1.5]
\fill(-1,0) node {$B_r$:};
\fill (0,0) circle (1.5pt);
\fill (1,0) circle (1.5pt);
\fill (2,0) circle (1.5pt);
\fill (4,0) circle (1.5pt);
\fill (5,0) circle (1.5pt);
  \draw (0,0) node[below] {$_1$} --
        (1,0) node[below] {$_2$} -- (2,0)node[below] {$_3$}--(2.5,0);
\draw[style=dashed](2.5,0)--(3.5,0);
\draw (3.5,0)--(4,0) node[below] {$_{r-1}$};
\draw (4,0.05) --
        (5,0.05);
\draw (4,-0.05) --
        (5,-0.05) node[below]  {$_r$};
\end{tikzpicture}
\end{center}
\medskip
\begin{center}
\begin{tikzpicture}[scale=1.5]
\fill(-1,0) node {$D_r$:};
\fill (0,0) circle (1.5pt);
\fill (1,0) circle (1.5pt);
\fill (2,0) circle (1.5pt);
\fill (4,0) circle (1.5pt);
\fill (4.8,0.5) circle (1.5pt);
\fill (4.8,-0.5) circle (1.5pt);
  \draw (0,0) node[below] {$_1$} --
        (1,0) node[below] {$_2$} -- (2,0)node[below] {$_3$}--(2.5,0);
\draw[style=dashed](2.5,0)--(3.5,0);
\draw (3.5,0)--(4,0) node[below] {$_{r\!-\!2}$};
\draw (4,0.05) --
        (4.8,0.5) node[below] {$_{r}$};
\draw (4,-0.05) --
        (4.8,-0.5) node[below]  {$_{r-1}$};
\end{tikzpicture}
\end{center}
Then we have the following sequence of three Coxeter systems:\medskip
\begin{equation}
(\oW,\bar S)\leq(\dW,\cS)\leq (W,S),
\end{equation}
where
$S=\{s_1,\cdots,s_{r-1},s_r\}$,
$\cS=\{s_1,\cdots,s_{r-1},\vsg_r\}$ with $\vsg_r:=s_rs_{r-1}s_r$, and
 $\bar S=\{ s_1,\cdots,s_{r-1}\}$. Note that  $\bar W$ is isomorphic to the symmetric group $\fS_r$ on $r$ letters.


Let
$t_r=s_r$, $t_{r-i}=s_{r-i}t_{r-i+1}s_{r-i}$, $u_i=t_rt_{r-i}$ for
$1\le i\le r-1$, and let $C=\langle t_1,\cdots,t_r\rangle$ and
$\check C=\langle u_1,\cdots,u_{r-1}\rangle$.
Then, we have $W=C\rtimes\bar W$ and $\dW=\check C\rtimes\bar W$.
Moreover, we often identify $\oW$ with $\fS_r$ in the sequel.

Let $\ell$ (resp. $\dl$) be the length function on $\wW$ 
(resp. $\dW$) with respect to $S$ (resp. $\dS$) 
and $n_r$ the function giving
the number of $s_r$ in a reduced expression of an element of $\wW$
(cf. \cite[(2.1.1)]{DS1}). Clearly, 
\begin{equation}\label{n_r}
\cW=\{w\in W\mid n_r(w)\in 2\mathbb N\}.
\end{equation}

The inner automorphism $\fkf:w\mapsto s_rws_r$ on $\wW$ induces by restriction a graph automorphism $\fkf$ on $\cW$
which {\it flips} the
Coxeter graph:
\begin{equation}\label{graph auto}
\fkf:\cW\longrightarrow\cW, \;\; w\longmapsto s_rws_r.
\end{equation}
Note that $\fkf$ fixes each element of $\bar S$, and interchanges
the two parabolic copies of $\fS_r$ in $\cW$.

Let $H_{\bsq,\bsq'}=H_{\bsq,\bsq'}^\scb$ be the 2-parameter Hecke algebra of type $B_r$, over the polynomial ring $\sA'=\mathbb Z[\bsq,\bsq']$ in two variables, generated by $T_1,\ldots, T_{r-1},T_r$ ($T_i=T_{s_i}$) which satisfy the relations:
\begin{equation}\label{rels}
\aligned
(1)\quad &T_iT_j=T_jT_i,\;\; |i-j|\geq2,\\
(2) \quad&T_jT_{j+1}T_j=T_{j+1}T_jT_{j+1}\;\; (1\leq j<r-1);\\
 (3)\quad&T_i^2=(\bsq-1)T_i+\bsq\;\; (1\leq i<r),\\
 (4)\quad&T_{r-1}T_rT_{r-1}T_r=T_rT_{r-1}T_rT_{r-1},\;\; \\
 (5)\quad&T_r^2=(\bsq^\prime-1)T_r+\bsq^\prime.
 \endaligned
 \end{equation}
Putting $T_w=T_{i_1}\cdots T_{i_l}$, where $w=s_{i_1}\cdots s_{i_l}\in W$ is a reduced expression, we obtain a basis $\{T_w\mid w\in W\}$ for $H_{\bsq,\bsq'}$. 

Let $H_{\bsq,1}=H_{\bsq,\bsq'}\otimes_{\sA'}\mathbb Z[\bsq]$ be the algebra obtained by base change from $\sA'$ to $\sA:=\mathbb Z[\bsq]$ under the specialization of $\bsq'=1$. Then $H_{\bsq,1}$ is presented by $T_1,\ldots, T_{r-1},T_r$ and relations
 \begin{equation}\label{Hq1 rels}
\text{(1)--(4) in }\eqref{rels}, \quad\text{and }\quad (5')\;\;T_r^2=1.
 \end{equation}

The $\mathbb Z[\bsq]$-algebra
$H_{\bsq,1}$ contains a subalgebra $\check H_\bsq:=H^\scd_\bsq$ which is isomorphic to the Hecke algebra of type $D_r$. Here $\check H_\bsq$  is generated by $T_1,\ldots, T_{r-1},T_{\vsg}:=T_rT_{r-1}T_r$ with defining relations (1)--(3) in \eqref{rels} together with:
$$T_\vsg^2=(\bsq-1)T_\vsg+\bsq,\;\;T_\vsg T_{i}=T_{i}T_\vsg \;(i\neq r-2),\;\;T_{r-2}T_\vsg T_{r-2}=T_\vsg T_{r-2}T_\vsg.
$$
 We will use the same notation as above describing basis $\{T_w\mid w\in W\}$ for
$H_{\bsq,1}$ and basis $\{T_w\mid w\in \cW\}$ for $\check H_\bsq$. We often write $H=H_{\bsq,1}$ and $\cH=H_\bsq$ for notational simplicity (see \S4.1).

Both $H_{\bsq,1}$ and $\cH_\bsq$ admit an algebra anti-involution (anti-automorphism of order 2):
\begin{equation}\label{tau}
\tau:T_w\longmapsto T_{w^{-1}}.
\end{equation}

We use the following indexing set to label (standard) parabolic subgroups of $W$:
$$\Lambda(n+1,r)=\{\la=(\la_1,\ldots,\la_{n},\la_{n+1})\in \mathbb N^{n+1}\mid \la_1+\cdots+\la_{n+1}=r\}.$$ 
 For 
$\la=(\la_1,\ldots,\la_{n},\la_{n+1})\in\Lambda(n+1,r)$, define 
 the parabolic subgroup of $W$ associated with $\la$ as the subgroup
 \begin{equation}\label{parabolic}
W_\la=\langle S\backslash\{s_{\la_1+\cdots+\la_i}\mid i\in[1,n]\}\rangle,
\end{equation}
and let $x_\la=\sum_{w\in W_\la}T_w$. The $x_\la H_{\bsq,1}$ is a right $H_{\bsq,1}$-module with basis $x_\la T_d$, for $d\in\sD_\la$, where $\sD_\la$ is the set of distinguished right coset representative of $W_\la$. Let $\sD_{\la\mu}:=\sD_\la^{-1}\cap\sD_\mu$, for $\la,\mu\in\Lambda(n+1,r)$. This is the set of distinguished representatives of $W_\la$-$W_\mu$ double cosets.

Following \cite[(3.3.2)(2)]{DS1} (or \cite{BWW} in the context of $i$-quantum groups), we define
\begin{equation}
S_{\bsq,1}^\scb(n,r)=S^\jmath_{\bsq,1}(n,r)=\End_{H_{\bsq,1}}\Big(\bigoplus_{\la\in\La(n+1,r)}x_\la H_{\bsq,1}\Big).
\end{equation}

\begin{rem}
The double subscripts in the notation $S_{\bsq,1}^\scb(n,r)$ indicate this unequal parameter setting. 
It is different from the equal-parameter algebra $\mathbf S^\jmath$ or
$\sS^\jmath_\sZ(n,r)$  investigated in \cite{BKLW} or in \cite{DW1}.
\end{rem}

Following \cite[Rem.~2.6(1)]{DS2}, we define, by restriction, {\it the $\bsq$-Schur algebra of type $D$}\footnote{\label{ftone}An alternative definition is given in Definition \ref{S^D}}: 
\begin{equation}\label{kappa}
S_{\bsq}^\kappa(n,r)=\End_{\check H_\bsq}\big(\bigoplus_{\la\in\La(n+1,r)}x_\la H_{\bsq,1}|_{\check H_\bsq}\big)
\end{equation}

Let $\bsq=\up^2$ and $\sZ:=\mathbb Z[\up,\up^{-1}]$. By base change to $\sZ$, we obtain the Hecke algebras $\sH_{\up,1},\check\sH_\up$ and the $\up$-Schur algebras  $\sS_{\up,1}^\scb(n,r)$ and $\sS_\up^\kappa(n,r)$:
\begin{equation}\label{kappaZ}
\sH_{\up,1}:=H_{\bsq,1}\otimes \sZ,\;\;\check\sH_\up:=\check H_\bsq \otimes\sZ,\;\;\sS_{\up,1}^\scb(n,r):=S_{\bsq,1}^\scb(n,r)\otimes_{\sA}\sZ,\;\;\sS_\up^\kappa(n,r)=S_{\bsq}^\kappa(n,r)\otimes_{\sA}\sZ.
\end{equation}
Though we will not consider these $\sZ$-algebras in the rest of the paper, they play important roles for the study of the Schur--Weyl--Hecke duality mentioned in the introduction.

 \subsection{Identifying $W,W_\la$ with certain fixed-point subgroups of $\fS_{2r}$}
 We may embed $W$ into $\fS_{2r}$ as a fixed-point subgroup $(\fS_{2r})^\theta$, where $\theta$ is the involution 
 $$\theta: \fS_{2r}\longrightarrow \fS_{2r}, (i,j)\longmapsto (2r+1-i,2r+1-j).$$

More precisely, the map 
\begin{equation}\label{iota}
\aligned
\iota: W&\longrightarrow \fS_{2r}, \\
s_i&\longmapsto (i,i+1)(2r+1-i,2r-i)\;\;(1\leq i<r),\quad s_r\longmapsto(r,r+1)\endaligned
\end{equation}
is an injective group homomorphism.
By identifying $W(=\WB)$ with its image $\iota(W)$, $W$ acts on the set $[1,2r]$ and, hence, on the power set of $[1,2r]$. With this identification, $W$ consists of
permutations 
\begin{equation}\label{permB}\begin{pmatrix} 1&2&\cdots&r&r+1&\cdots&2r-1&2r\cr
          i_1&i_2&\cdots&i_r&i_{r+1}&\cdots&i_{2r-1}&i_{2r}\end{pmatrix},
          \end{equation}
where $i_{j}+i_{2r+1-j}=2r+1$ for all $j$. In other words,  $W$ is generated by
$$s_i=\si_i\si_{2r-i}\;(1\leq i\leq r-1),\;\;s_r=\si_r,$$
where $\si_i=(i,i+1)$, for $i=1,2,\ldots,2r-1$ form the Coxeter generators of $\fS_{2r}$.

Moreover, the restriction of $\iota$ to 
 $\check W(=\WD)$ has the image which is the subgroup generated by 
\begin{equation}\label{SD}
s_i=\si_i\si_{2r-i}, \;\;(1\leq i<r),\quad \vsg=\si_r\si_{r-1}\si_{r+1}\si_r=(r-1,r+1)(r,r+2).
\end{equation}
We often identify $\cW$ as a subgroup of $\fS_{2r}$ via $\iota$. Thus, $\cW$ acts on the power set of $[1,2r]$.


For any composition $\mu=(\mu_1,\ldots,\mu_N)$ of $2r$, define the associated partial sum sequence
\begin{equation}\label{par sum}
\wtmu=(\wtmu_1,\wtmu_2,\cdots,\wtmu_N),\;\; \text{ where } \wtmu_i=\mu_1+\cdots+\mu_i \;(\wtmu_0=0).
\end{equation}

Let $\fS_\mu$ be the Young subgroup of $\fS_{2r}$ associated with $\mu$. This is the stabilizer subgroup of the standard Young tabloid\footnote{If we identify $\mu$ with its Young diagram consisting of $\mu_1$ boxes in row 1, and $\mu_2$ boxes in row 2 and so on, then a $\mu$-tableau is obtained by filling the numbers $1,2,\ldots,2r$ into boxed.
A $\mu$-tabloid is a row-equivalent class of $\mu$-tableaux.} $R^\mu=R^\mu_1\times\cdots\times R_N^\mu$, where $R^\mu_i:=[\tmu_{i-1}+1, \tmu_{i-1}+\mu_i]$, for all $i=1,2,\ldots,N$.
In other words, 
\begin{equation}\label{S_mu}
\fS_\mu=\bigcap_{i=1}^N\text{Stab}_{\fS_{2r}}(R_i^\mu),
\end{equation}
 consisting of permutations which stabilize each of the subsets $R^\mu_i$.  

For $\la\in \Lambda(n+1,r)$, let
$$\widehat{\lambda}:=(\lambda_1,\cdots, \lambda_n,2\lambda_{n+1},\lambda_n,\cdots, \lambda_1).$$
This defines an injective map
\begin{equation}\label{hatmap} \widehat{\  }:\Lambda(n+1,r) \longrightarrow \Lambda(2n+1,2r),\;\;
    \lambda \longmapsto \widehat{\lambda}.
    \end{equation}
Let $\widehat{\Lambda}(n+1,r)$ denote its image.
We now describe the parabolic subgroup $W_\la$ defined in \eqref{parabolic} in terms of stabilizers of the subsets $R_i^\la:=R_i^\wla$, for $1\leq i\leq \sfm$, where $\sfm=\sfm(\la)$ denotes the {\it maximal index} $i$ satisfying $\la_i\neq0$.

Let 
\begin{equation}\label{Dnr}
\aligned
 \sD(n,r)&:=\{(\la,d,\mu)\mid \la,\mu\in\Lambda(n+1,r), d\in\sD_{\la,\mu}\},\\
\Xi_{N,2r}&:=\{A=(a_{i,j})\in \text{Mat}_{N,N}(\mathbb N)\mid a_{i,j}=a_{N+1-i,N+1-j}, \sum_{i,j}a_{i,j}=2r\}.\endaligned
\end{equation}

\begin{lem}\label{map kap} Let $\la,\mu\in\Lambda(n+1,r)$, $w\in\fS_{2r}$,  $d\in\sD_{\la,\mu}$, and $\td=\iota(d)$ (see \eqref{iota}).
\begin{itemize}
\item[(1)] $W_\la=(\fS_{\wla})^\theta= \bigcap_{i=1}^\sfm\Stab_W(R^\la_i).$ 
\item[(2)] Suppose the double coset $\fS_{\widehat\la}w\fS_{\widehat\mu}$ defines a $(2n+1)\times(2n+1)$ matrix $A=(a_{i,j})$ over $\mathbb N$ whose entries sum to $2r$.
Then $\fS_{\widehat\la}w\fS_{\widehat\mu}$ is stabilized by $\theta$ if and only if $a_{i,j}=a_{N+1-i,N+1-j}$ for all $i,j\in[1,N]$ ($N=2n+1$). In particular, for $\td=\iota(d)\in\fS_{2r}$,  $\theta$ stabilizes $\fS_\wla \td\fS_\wmu$ and the double coset $W_\la dW_\mu=(\fS_\wla \td\fS_\wmu)^\theta$.
\item[(3)]  There is a bijection 
\begin{equation}\label{fkd}
\mathfrak d:\sD(n,r)\longrightarrow \Xi_{N,2r},\;\;(\la,d,\mu)\longmapsto A=(|R^{\widehat\la}_i\cap dR^{\widehat\mu}_j|).
\end{equation}
Moreover, we have $s_r\in W_{\la}\cap {}^dW_\mu$ if and only if $|R^{\widehat\la}_{n+1}\cap dR^{\widehat\mu}_{n+1}|\geq2$.
\end{itemize}
\end{lem}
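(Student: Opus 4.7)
The plan is to prove the three parts in order, leveraging the standard theory of double cosets in $\fS_{2r}$ together with the fact that $W = (\fS_{2r})^\theta$ where $\theta$ is conjugation by the longest element $w_0\in\fS_{2r}$.

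\emph{Part (1).} First, I would observe that each generator of $W_\la$ under $\iota$, namely $\iota(s_i) = \si_i\si_{2r-i}$ for $i<r$ or $\iota(s_r) = \si_r$ (each indexed by some $i\notin\{\wla_1, \ldots, \wla_n\}$), preserves every block $R_k^{\widehat\la}$, because the positions $\{i, i+1\}$ (together with their $\theta$-mirrors) lie within a single block. This yields the inclusion $W_\la\subseteq \bigcap_{k=1}^{\sfm}\Stab_W(R_k^\la)$; empty blocks $(\widehat\la_k = 0)$ are trivially stabilized, justifying the stopping index $\sfm$. For the reverse inclusion and the identification $(\fS_\wla)^\theta = \fS_\wla\cap W$, I would decompose a $\theta$-equivariant element of $\fS_\wla \cong \prod_{k=1}^N\fS_{\wla_k}$ block-by-block: on each pair $\{R_k^{\widehat\la}, R_{N+1-k}^{\widehat\la}\}$ (for $k\leq n$) the $\theta$-equivariance constraint reduces it to a single diagonal copy of $\fS_{\la_k}$, while on the middle block $R_{n+1}^{\widehat\la}$ of size $2\la_{n+1}$ it reduces to the hyperoctahedral subgroup $W(B_{\la_{n+1}})\subseteq\fS_{2\la_{n+1}}$. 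This matches the parabolic decomposition $W_\la\cong\fS_{\la_1}\times\cdots\times\fS_{\la_n}\times W(B_{\la_{n+1}})$.

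\emph{Part (2).} Since $\widehat\la, \widehat\mu$ are palindromic, $w_0$ normalizes both $\fS_\wla$ and $\fS_\wmu$, so $\theta(\fS_\wla w\fS_\wmu) = \fS_\wla\theta(w)\fS_\wmu$. Using $w_0 R_j^{\widehat\mu} = R_{N+1-j}^{\widehat\mu}$, a direct computation shows that the matrix of $\theta(w)$ has entries $a_{N+1-i, N+1-j}$. Since a $(\fS_\wla, \fS_\wmu)$-double coset is determined by its matrix, $\theta$-stability is equivalent to the central symmetry $a_{ij} = a_{N+1-i, N+1-j}$. For $d\in W$, $\td = \iota(d)$ is $\theta$-fixed so its coset is $\theta$-stable. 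The identity $W_\la dW_\mu = (\fS_\wla\td\fS_\wmu)^\theta$ decomposes into the inclusion $\subseteq$ (immediate from Part (1)) together with the reverse: any $w\in(\fS_\wla\td\fS_\wmu)^\theta\subseteq W$ lies in $W_\la d'W_\mu$ for some $d'\in\sD_{\la,\mu}$, but since $d'$ produces the same matrix as $d$, the injectivity from Part (3) forces $d' = d$.

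\emph{Part (3).} The classical bijection between $\sD_{\widehat\la,\widehat\mu}$ and nonnegative integer matrices with the given row and column sums, restricted via Part (2) to $\theta$-symmetric matrices, supplies the candidate map $\mathfrak d$. For any $A\in\Xi_{N,2r}$ with associated distinguished $\fS_{2r}$-representative $d'\in\sD_{\widehat\la,\widehat\mu}$, the $\theta$-stability of the coset combined with length-preservation and uniqueness forces $\theta(d') = d'$, hence $d'\in W$. To see $d'\in\sD_{\la,\mu}$, I would invoke the length formula $\ell_{\fS_{2r}}(w) = 2\ell_W(w) - n_r(w)$ valid for $w\in W$: any element of $W_\la d'W_\mu$ strictly shorter than $d'$ with respect to $\ell_W$ would also be strictly shorter with respect to $\ell_{\fS_{2r}}$, contradicting the minimality of $d'$ in $\fS_\wla d'\fS_\wmu$. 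Surjectivity and injectivity of $\mathfrak d$ follow.

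\emph{The $s_r$ claim, and main obstacle.} For distinguished $d$, each intersection $R_i^{\widehat\la}\cap dR_j^{\widehat\mu}$ is a contiguous subinterval of $R_i^{\widehat\la}$, arranged in the order $j = 1, 2, \ldots, N$. Applied to $i = n+1$, the central symmetry $a_{n+1, j} = a_{n+1, N+1-j}$ forces the middle subinterval to be $[r - a_{n+1,n+1}/2 + 1, r + a_{n+1,n+1}/2]$, which contains $\{r, r+1\}$ precisely when $a_{n+1, n+1}\geq 2$ (note $a_{n+1,n+1}$ is automatically even, since the intersection is $\theta$-stable and $\theta$ acts freely on $[1, 2r]$). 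A direct $\theta$-equivariance computation for $d^{-1}s_r d = (d^{-1}(r), 2r+1 - d^{-1}(r))$ then shows this condition is equivalent to $s_r\in W_\la\cap{}^dW_\mu$. The most delicate step will be the identification of distinguished representatives in $W$ and in $\fS_{2r}$ together with the attendant length comparison, which genuinely uses both $W = \fS_{2r}^\theta$ and the Coxeter structure of $\iota$.
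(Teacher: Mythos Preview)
Your proposal is correct, and for Part~(1) it matches the paper's argument almost verbatim. For Part~(2) and the bijection in Part~(3) the paper simply cites these as well known (referring to \cite[Lem.~3.1]{DW1}), whereas you supply an actual outline; your flag that the length comparison $\ell_{\fS_{2r}}(w)=2\ell_W(w)-n_r(w)$ is the delicate point is apt, and indeed that step needs the additional observation that if $\si_i, \si_{2r-i}$ are both ascents of $d'$ in $\fS_{2r}$ then $s_i$ is an ascent of $d'$ in $W$, which is cleaner than a raw length inequality.

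The genuine divergence is in the $s_r$ claim. The paper introduces a $W$-equivariant ``wedge'' map
\[
\wedge:X_N\times X_N\longrightarrow X_{N^2},\qquad (R,R')\longmapsto (R_i\cap R'_j)_{i,j},
\]
identifies $W_\la\cap{}^dW_\mu=\Stab_W(R^{\widehat\la}\wedge dR^{\widehat\mu})$, and then observes that $s_r=(r,r+1)$ lies in this stabilizer iff $r,r+1$ lie in the single piece $R^{\widehat\la}_{n+1}\cap dR^{\widehat\mu}_{n+1}$. Your route---computing the explicit interval $[r+1-a_{n+1,n+1}/2,\ r+a_{n+1,n+1}/2]$ from the ordered-block structure of distinguished representatives and checking $d^{-1}s_rd=(d^{-1}(r),2r+1-d^{-1}(r))\in W_\mu$---reaches the same conclusion by direct calculation. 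The paper's approach has the advantage of setting up the stabilizer description $\Stab_W(R^{\widehat\la}\wedge dR^{\widehat\mu})=W_\la\cap{}^dW_\mu$ as a reusable tool; yours is more self-contained and avoids introducing the auxiliary $W$-set $X_{N^2}$.
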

\begin{proof} The equality $W_\la=\fS_\wla\cap W=(\fS_{\wla})^\theta$ is clear from the definition of the embedding $\iota$.
To see the second equality, we observe
 $$w(R^\la_i)=R^\la_i,\text{ for all }1\leq i\leq\sfm\iff w(R^\wla_i)=R^\wla_i, w(R^{\wla}_{2n+2-i})=R^{\wla}_{2n+2-i},\text{ for all }1\leq i\leq\sfm.$$
 Thus, we have 
 $$
  \bigcap_{i=1}^\sfm\Stab_W(R^\la_i)=\bigcap_{i=1}^N\text{Stab}_W(R_i^{\wla_i})=\bigcap_{i=1}^N\big(\text{Stab}_{\fS_{2r}}(R_i^{\wla_i})\cap W\big)=\fS_{\wla}\cap W=(\fS_{\wla})^\theta.$$
 proving (1).
 
The remaining but the final assertion is well-known; see, e.g., \cite[Lem.~3.1]{DW1}. We now prove the last assertion.

For $\la\in\La(n+1,r)$ and $w\in W$, let $R^{\widehat\la}=R^{\widehat\la}_1\times\cdots \times R^{\widehat\la}_N$ and $w.R^{\widehat\la}:=w(R^{\widehat\la}_1)\times\cdots \times w(R^{\widehat\la}_N)$, which are called $\wla$-{\it tabloids}.  This defines a $W$-set $X_N:=\{w.R^{\widehat\la}\mid \la\in\La(n+1,r),w\in W\}$, consisting of all $\wla$-tabloids.  
This $W$-action on $X_N$ induces 
a diagonal action of $W$ on $X_N\times X_N$ such that each orbit has a representative $(R^\wla, dR^\wmu)$ for some $\la,\mu\in\La(n+1,r)$ and $d\in\sD_{\la,\mu}$.
For $R,R'\in X_N$ with $R=R_1\times\ldots\times R_N$, $R'=R'_1\times\ldots\times R'_N$, let
$$R\wedge R'=(R_1\cap R'_1)\times (R_1\cap R'_2) \times\cdots\times (R_1\cap R'_N)\times\ldots\times
(R_N\cap R'_1)\times (R_N\cap R'_2) \times\cdots\times (R_N\cap R'_N).$$
Define $X_{N^2}:=\{R\wedge R'\mid R,R'\in X_N\}$. Then the map
$$\wedge:X_N\times X_N\longrightarrow X_{N^2},\;\;(R,R')\longmapsto R\wedge R'$$
is bijective and $W$-equivariant. Thus, $\text{Stab}_W(R,R')=\text{Stab}_W(R\wedge R')=\prod_{i,j\in[1,N]}\text{Stab}_W(R_i\cap R'_j).$ Hence, for $(\la,d,\mu)\in\sD(n,r)$, $R=R^{\widehat\la}$ and $R'=dR^{\widehat\mu}$, 
since $\text{Stab}_W(R,R')=W_\la\cap{}^dW_\mu$, it follows that
$$\aligned
s_r\in W_\la\cap{}^dW_\mu&\iff s_r\in \text{Stab}_W(R^{\widehat\la}\wedge dR^{\widehat\mu})\\
&\iff s_r\in\text{Stab}_W(R^{\widehat\la}_{n+1}\wedge dR^{\widehat\mu}_{n+1})\\
&\iff r,r+1\in R^{\widehat\la}_{n+1}\cap dR^{\widehat\mu}_{n+1}\iff|R^{\widehat\la}_{n+1}\cap dR^{\widehat\mu}_{n+1}|\geq2,
\endaligned$$
as desired.
\end{proof}

\begin{rem}In the proof above, Lemma \ref{map kap}(1) implies that we may replace the $W$-set $X_N$ of $\wla$-tabloids by its truncated version,
the set of all $\la$-tabloids $w.R^\la=w(R^\la_1)\times\cdots\times w(R^\la_{\sfm(\la)})$ ($\la\in\La(n+1,r), w\in W$), where 
$R^\la_1\sqcup\cdots\sqcup R^\la_\sfm=
[1,r+\la_{n+1}]$.
\end{rem}
For a matrix $A=(a_{i,j})\in\Xi_{N,2r}$, form two sequences via its rows and columns and their associated partial sum sequences as defined in \eqref{par sum}:
\begin{equation}\label{r(A)}
\aligned
\bfr(A)&=(a_{1,1},a_{1,2},\cdots\cdots,a_{1,N},\cdots,a_{N,1},a_{N,2},\cdots,a_{N,N}),\quad
\widetilde\bfr_A=(\widetilde a^r_{1,1},\cdots\cdots,\widetilde a^r_{N,N})\\
\bfc(A)&=(a_{1,1},a_{2,1},\cdots\cdots,a_{N,1},\cdots,a_{1,N},a_{2,N},\cdots,a_{N,N}),\quad
\widetilde\bfc_A=(\widetilde a^c_{1,1},\cdots\cdots,\widetilde a^c_{N,N}).\endaligned
\end{equation}
If $A=\mathfrak d(\la,d,\mu)$, then we set $d_A=d$, $\wla=\ro(A)$, and $\wmu=\co(A)$.  Thus,
we have
\begin{equation}\label{roco}
\ro(A)=\Big(\sum_{j=1}^Na_{1,j},\sum_{j=1}^Na_{2,j},\ldots,\sum_{j=1}^Na_{N,j}\Big),\quad
\co(A)=\Big(\sum_{i=1}^Na_{i,1},\sum_{i=1}^Na_{i,2},\ldots,\sum_{i=1}^Na_{i,N}\Big).
\end{equation}
Note that $\fS_\wla\cap{\widehat d_A} \fS_\wmu{\widehat d_A}^{-1}=\fS_{{\bf r}(A)}$ and 
${\widehat d_A}^{-1}\fS_\wla{\widehat d_A}\cap \fS_\wmu=\fS_{{\bf c}(A)}$.

By deleting row $n+1$ and column $n+1$ (i.e., the central row and column of $A$), $A$ is divided into four $n\times n$ submatrices: the upper/lower right corner matrices $\urcm/\lrcm$ and the upper/lower left corner matrices $\ulcm/\llcm$. The following number will be very useful as it tells when the element $d_A$ is in $\cW$. Let
\begin{equation}
|\urcm|=\sum_{i,j\in[1,n]}a_{i,n+1+j}=|\llcm|
\end{equation}
 be the entry sum of the upper right corner matrix.

\begin{cor}\label{urcm}
Maintain the notation introduced above. For $A=(a_{i,j})\in\Xi_{N,2r}$, $d_A\in W$ is the following permutation  $\left(\begin{smallmatrix}
\widetilde{a}^c_{i-1,j}+1 &\widetilde{a}^c_{i-1,j}+2&\cdots&\widetilde{a}^c_{i-1,j}+a_{i,j}\\
\widetilde a^r_{i,j-1} +1
& \widetilde a^r_{i,j-1}+2
&\cdots&\widetilde a^r_{i,j-1}+a_{i,j}
\end{smallmatrix}\right)$ in $\fS_{2r}$, for all $(i,j)$ with $a_{i,j}>0$.  Moreover, we have
$d_A\in\cW$ if and only if $ |\urcm|\in 2\mathbb N$.
\end{cor}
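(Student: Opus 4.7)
The proof divides into two parts.

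For the permutation formula for $d_A$: By Lemma \ref{map kap}(3), $a_{i,j} = |R^{\widehat\la}_i \cap d_A R^{\widehat\mu}_j|$, so $d_A$ sends exactly $a_{i,j}$ elements of $R^{\widehat\mu}_j$ into $R^{\widehat\la}_i$. I would define a permutation $\sigma \in \fS_{2r}$ via the two-line notation in the statement and verify: $\sigma$ is a bijection (source and target entries partition $[1,2r]$ via the column-major and row-major partial sums, respectively); $\sigma$ is $\theta$-equivariant, using $a_{i,j} = a_{N+1-i,N+1-j}$, hence $\sigma \in W$; and $\sigma$ sends each block $R^{\widehat\mu}_j \cap \sigma^{-1}(R^{\widehat\la}_i)$ in increasing order, which is the minimality condition identifying $\sigma$ as the distinguished representative of $W_\la \sigma W_\mu$. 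By the bijectivity of $\mathfrak d$ in Lemma \ref{map kap}(3), $\sigma = d_A$.

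For the $\cW$-criterion, I would introduce the sign homomorphism $\phi : W \to \mathbb{Z}/2$ with $\phi(s_i) = 0$ for $i < r$ and $\phi(s_r) = 1$; by \eqref{n_r}, $\ker\phi = \cW$. A check on generators yields $\phi(w) \equiv |\{k \in [1,r] : w(k) > r\}| \pmod 2$: for $i < r$, $s_i = \sigma_i\sigma_{2r-i}$ stabilizes both $[1,r]$ and $[r+1,2r]$ setwise, while $s_r = \sigma_r$ exchanges the unique boundary pair $\{r, r+1\}$. Thus $d_A \in \cW$ if and only if $P := |\{k \in [1,r] : d_A(k) > r\}|$ is even.

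To evaluate $P$, I would decompose $[1,r]$ as $R^{\widehat\mu}_1 \sqcup \cdots \sqcup R^{\widehat\mu}_n \sqcup (R^{\widehat\mu}_{n+1} \cap [1,r])$ and the codomain analogously. The key preliminary is that $a_{n+1,n+1}$ is even: the central row $(a_{n+1,1}, \ldots, a_{n+1,N})$ is palindromic with sum $\widehat\la_{n+1} = 2\la_{n+1}$, giving $2\sum_{j=1}^n a_{n+1,j} + a_{n+1,n+1} = 2\la_{n+1}$. Using the permutation formula together with partial-sum bookkeeping, one shows: (i) for $j \leq n$, images of $R^{\widehat\mu}_j$ inside $R^{\widehat\la}_{n+1}$ stay $\leq r$ (since $\sum_{j'=1}^j a_{n+1,j'} \leq \la_{n+1} - a_{n+1,n+1}/2$), so only images in $R^{\widehat\la}_i$ with $i \geq n+2$ contribute $\sum_{i \geq n+2} a_{i,j}$; (ii) by the symmetric computation on the central column, images of $R^{\widehat\mu}_{n+1} \cap [1,r]$ never exceed $r$. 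Summing over $j \leq n$ then yields $P = |\llcm| = |\urcm|$ (the last equality by $\theta$-symmetry), completing the proof.

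The main technical obstacle is the verification of (i) and (ii): the central row and column must each contribute zero to $P$, and this hinges on the evenness of $a_{n+1,n+1}$ and the precise interaction between the block-monotone structure of $d_A$ and the palindromic shape of $\widehat\la, \widehat\mu$.
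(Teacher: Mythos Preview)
Your argument is correct and follows the same underlying route as the paper: both compute the quantity $P=|\{k\in[1,r]:d_A(k)>r\}|$, show it equals $|\urcm|$, and use this to decide $\cW$-membership. The paper is terser because it outsources both steps---the permutation formula to \cite[Ex.~8.2]{DDPW} and the identification of $P$ with $n_r(d_A)$ (exactly, not just mod $2$) to \cite[(2.1.1)]{DS1}---whereas you verify the permutation formula directly and only need the mod-$2$ statement via the sign homomorphism $\phi$. Your detailed analysis of the central row and column (showing they contribute nothing to $P$ because $a_{n+1,n+1}$ is even) makes explicit what the paper leaves implicit in its one-line assertion that the intersection has cardinality $|\urcm|$.

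One small point to tighten: your ``check on generators'' for $\phi(w)\equiv P(w)\pmod 2$ presupposes that $w\mapsto P(w)\bmod 2$ is itself a group homomorphism on $W$, since otherwise agreement on $S$ says nothing about general $w$. This is standard---it is the product-of-signs character on $C\cong(\mathbb Z/2)^r$ in the decomposition $W=C\rtimes\bar W$, extended trivially on $\bar W$---but deserves a sentence.
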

\begin{proof}The first assertion follows from \cite[Ex. 8.2]{DDPW}, Thus, if $d_A$ permutes $(1,2,\ldots,2r)$ to $(i_1,i_2,\ldots,i_{2r})$ as displayed in \eqref{permB}, then $\{i_1,i_2,\ldots,i_{r}\}\cap \{r+1,r+2,\ldots,2r\}$ has cardinality $|\urcm|$. However, by \cite[(2.1.1)]{DS1},
$|\urcm|=n_r(d_A)$. Now, the last assertion follows from \eqref{n_r}.
\end{proof}

Each triple $(\la,d,\mu)\in\sD(n,r)$ defines a natural basis element in $S^\scb_{\bsq,1}(n,r)$. We denote this element by $e_A$ if $A=\mathfrak d(\la,d,\mu)$. By definition, 
\begin{equation}\label{e_A}
e_A(x_\nu h)=\delta_{\mu,\nu}\Big(\sum_{w\in W_\la dW_\mu}T_w\Big)h.
\end{equation}
 \begin{prop}\label{stdbsB}
 The set $\{e_A\mid A\in\Xi_{2n+1,2r}\}$ forms a basis for $S^\scb_{\bsq,1}(n,r)$ which we call its natural basis.
 \end{prop}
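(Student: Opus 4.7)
The strategy is the standard one for $q$-Schur algebras: decompose the endomorphism algebra according to source and target summands, identify each Hom space with a space of $W_\mu$-invariants in $x_\la H$, and recognise the double-coset sums $T_{W_\la dW_\mu}$ as an obvious basis of the latter.

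First, I would write
\[
S_{\bsq,1}^\scb(n,r)=\bigoplus_{\la,\mu\in\La(n+1,r)}\Hom_{H_{\bsq,1}}\bigl(x_\mu H_{\bsq,1},\,x_\la H_{\bsq,1}\bigr),
\]
so that by the defining formula \eqref{e_A} each $e_A$ with $A=\mathfrak d(\la,d,\mu)$ lies in the $(\la,\mu)$-summand. Since $x_\mu H_{\bsq,1}$ is a cyclic right $H_{\bsq,1}$-module generated by $x_\mu$, any homomorphism $f$ is determined by $v:=f(x_\mu)\in x_\la H_{\bsq,1}$; and by using the Hecke relations \eqref{Hq1 rels} one checks the pairing identities
\[
x_\mu T_s=\bsq\,x_\mu\;\;\bigl(s\in W_\mu\cap\{s_1,\ldots,s_{r-1}\}\bigr),\qquad x_\mu T_r=x_\mu\;\;(\text{if }s_r\in W_\mu),
\]
so that $f$ extends consistently to $x_\mu H_{\bsq,1}$ iff $v$ satisfies the corresponding right-invariance conditions under $W_\mu$. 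Thus $\Hom_{H_{\bsq,1}}(x_\mu H_{\bsq,1},x_\la H_{\bsq,1})$ is identified with the space $\sJ_{\la,\mu}$ of $W_\mu$-invariants in $x_\la H_{\bsq,1}$.

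Next, I would exhibit $\{T_{W_\la dW_\mu}\mid d\in\sD_{\la,\mu}\}$ as an $\sA$-basis of $\sJ_{\la,\mu}$. Writing $W_\la dW_\mu=\bigsqcup_{y}yW_\mu$ over distinguished representatives with $\ell(yw)=\ell(y)+\ell(w)$ for $w\in W_\mu$ yields $T_{W_\la dW_\mu}=\bigl(\sum_yT_y\bigr)x_\mu$, while the symmetric decomposition on the left gives $T_{W_\la dW_\mu}\in x_\la H_{\bsq,1}$; the right $W_\mu$-invariance is then immediate from the two identities above. Linear independence is a consequence of the disjointness of the double cosets $W_\la dW_\mu$ together with the $T_w$-basis of $H_{\bsq,1}$. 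For spanning, I would take $v=\sum_{w\in W}c_wT_w\in\sJ_{\la,\mu}$ and invoke the quadratic/linear relations to force $c_{sw}=c_w$ for every $s\in W_\la\cap S$ (via $T_sv=\bsq v$ when $s\neq s_r$, and via $T_rv=v$ when $s=s_r$) and likewise for right multiplication by $W_\mu$, so that $c_w$ depends only on the double coset $W_\la wW_\mu$.

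Finally, applying the bijection $\mathfrak d:\sD(n,r)\to\Xi_{N,2r}$ of Lemma \ref{map kap}(3) re-indexes this union of bases as $\{e_A\mid A\in\Xi_{N,2r}\}$, proving the proposition. The only delicate point, which I expect to be the main obstacle, is the mixed parameter $T_r^2=1$: the coefficient-matching argument in the spanning step has to be done in two regimes simultaneously, namely the "sign-like" character $T_s\mapsto\bsq$ on the simple reflections $s_1,\ldots,s_{r-1}$ and the "trivial" character $T_r\mapsto 1$ on $s_r$. However, because $T_r^2=1$ yields $T_rT_w=T_{s_rw}$ with no quadratic mixing term, the $s_r$-case is actually easier than the generic case, so no conceptual new input is needed beyond careful bookkeeping of the two characters.
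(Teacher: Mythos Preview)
Your proof is correct and is precisely the standard argument for this type of result. The paper itself states Proposition~\ref{stdbsB} without proof, treating it as well-known from the general theory of $q$-Schur--type endomorphism algebras (cf.\ \cite{DS1}, \cite[\S13]{DDPW}); your write-up supplies exactly the argument one would expect and which the paper leaves implicit.

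Two minor remarks. First, the step ``$f$ extends consistently to $x_\mu H_{\bsq,1}$ iff $v$ satisfies the invariance conditions'' is correct but could be tightened: the cleanest justification is that $x_\mu H_{\bsq,1}$ is $\sA$-free with basis $\{x_\mu T_d\mid d\in\sD_\mu\}$, so evaluation at $x_\mu$ gives an injection $\Hom_H(x_\mu H,x_\la H)\hookrightarrow x_\la H$, and the invariance conditions characterise the image. Second, your observation that the unequal-parameter case $T_r^2=1$ actually \emph{simplifies} the $s_r$-bookkeeping (since $T_rT_w=T_{s_rw}$ with no quadratic correction term) is exactly right and is the only place where the argument differs from the equal-parameter setting in \cite{BKLW}.
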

 
\subsection{Parabolic subgroups of $\cW$ as stabilizers of tabloids associated with signed compositions}
We now extend Lemma \ref{map kap} above to (standard) parabolic subgroups of $\cW$; compare \cite{LL}.
 Let $\la\in\La(n+1,r)$ have maximal index $\sfm$.

\underline{\bf The $\lambda_{n+1}=0$ case.} If $\lambda_{n+1}=0$, then $\sfm\leq n$. Define the {\it signed composition} $\la^+$ and $\la^-$
by setting 
\begin{equation}\label{defrlam}
\aligned    
\cR^{\la^+}_i&=\cR^{\la^-}_i=R_i^{\lambda}=[\la_{i-1}+1,\la_{i-1}+\la_i]\;\;\text{for }1\leq i\leq \sfm-1,\\
    \cR_\sfm^{\lambda^+}&=[\tillam_{\sfm-1}+1,\tillam_{\sfm-1}+\la_\sfm]=\{\tillam_{\sfm-1}+1,\cdots, r-1,r\}, \\
    \cR_\sfm^{\lambda^-}&=[\tillam_{\sfm-1}+1,\tillam_{\sfm}-1]\cup\{r+1\}=\{\tillam_{\sfm-1}+1,\cdots, r-1,r+1\}.
\endaligned
\end{equation}
Form the {\it tabloids} 
$$\cR^{\la^+}:=\cR^{\la^+}_1\sqcup\cdots\sqcup \cR^{\la^+}_\sfm,\qquad
\cR^{\la^-}:=\cR^{\la^-}_1\sqcup\cdots\sqcup \cR^{\la^-}_\sfm.$$ 
Define, for $\ep\in\{+,-\}$,
$$\cW_{\lambda^\epsilon}:={\Stab}_\cW(\cR_1^{\lambda^\ep})\cap {\Stab}_\cW(\cR_2^{\lambda^\ep})\cap\cdots\cap {\Stab}_\cW(\cR_\sfm^{\lambda^\ep}).$$


\underline{\bf The $\lambda_{n+1}\neq0$ case.} 
If $\lambda_{n+1}\neq0$, then define $\labt$ by setting
\begin{equation}\label{2.5.2} 
\begin{aligned}
    \cR_i^{\lamz}&=
    \begin{cases}
     [\tillam_{i-1}+1,\tillam_{i-1}+\lambda_i]= R^\la_i,&\text{ if }1\leq i\leq n,\\
      [\tillam_n+1,\tillam_n+2\lambda_{n+1}],&\text{ if }i=n+1.
    \end{cases}\\
\end{aligned}
\end{equation}
Form the {\it tabloid} $\cR^{\labt}:=\cR^{\labt}_1\sqcup\cdots\sqcup \cR^{\labt}_{n+1}$
and define the subgroup of $\cW$:
$$\cW_{\labt}:=\mathop{\bigcap}\limits_{i=1}^{n+1} {\rm Stab}_\cW(\cR_i^{\labt}).$$
The following result is established in \cite[Lem.~A.2.1]{LL} with different indexing.
\begin{lem}\label{parabolicD}
For $\la\in\La(n+1,r)$ and $\ep\in\{\oo,+,-\}$, the subgroups $\cW_{\la^\ep}$ are parabolic subgroups of $\cW=\lr{s_1,\ldots,s_{r-1},\vsg_r}$ with $\vsg_r=s_rs_{r-1}s_r$.
More precisely, we have
\begin{itemize}
\item[(1)] If $\la_{n+1}=0$ (so $\sfm\leq n$) and $\lambda_\sfm>1$, then
\begin{equation}\label{defcheckw1}
\aligned
\cW_{\la^+}  &=\langle  \check{S}-\{s_{{\tillam_1}},s_{\tillam_2},\cdots,s_{\tillam_{\sfm-1}},\vsg_r\}\rangle,\\
\cW_{\la^-} &=\langle   \check{S}- \{s_{\tillam_1},s_{\tillam_2},\cdots,s_{\tillam_{\sfm-1}},s_{r-1}\}\rangle.
\endaligned
\end{equation}
\item[(2)]
If $\la_{n+1}=0$ and $\lambda_\sfm=1$, then $\cR_\sfm^{\lambda^+}=\{r\}$ and $ \cR_\sfm^{\lambda^-}=\{r+1\}$ and \begin{equation}\label{defcheckw2} 
\cW_{\lambda^+}=\cW_{\lambda^-}=\langle\cS- \{s_{\tillam_1},s_{\tillam_2},\cdots,s_{\tillam_{\sfm-1}},s_{r-1},\vsg_r\}\rangle.
\end{equation}
\item[(3)] If $\la_{n+1}\neq0$, then
\begin{equation}\label{defcheckw3} 
\cW_\labt=\begin{cases}
   \lr{ \check{S}- \{s_{\tillam_1},s_{\tillam_2},\cdots,s_{\tillam_{n-1}},s_{r-1},\vsg_r\}}, &\text{ if }\lambda_{n+1}=1,\\
    \lr{\check{S}- \{s_{\tillam_1},s_{\tillam_2},\cdots,s_{\tillam_{n-1}}\}}, &\text{ if }\lambda_{n+1}>1.
\end{cases}\end{equation}
\end{itemize}
Moreover, every parabolic subgroup of $\cW$ is one of the forms above.
 \end{lem}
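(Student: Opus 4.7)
The plan is to verify each of \eqref{defcheckw1}--\eqref{defcheckw3} by a direct Coxeter-generator check, then pin down equality by a cardinality argument. Using the realizations $s_i = \si_i\si_{2r-i}$ for $1 \leq i < r$ and $\vsg_r = (r-1,r+1)(r,r+2)$ from \eqref{SD}, I ask for each $s \in \cS$ and each part $\cR_j^{\la^\ep}$ whether $s$ preserves $\cR_j^{\la^\ep}$ as a subset of $[1,2r]$. Since every generator moves at most four elements, this reduces to a mechanical inspection of how the generators interact with the boundaries of the tabloid $\cR^{\la^\ep}$.

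First I would carry out this inspection case by case. In Case (1), all parts lie in $[1, r] \cup \{r+1\}$. For $i < r-1$, the generator $s_i$ acts on $\{i, i+1\}$ and on $\{2r-i, 2r-i+1\} \subset [r+2, 2r]$, the latter being disjoint from every part; hence $s_i$ preserves the tabloid iff $i$ is not one of the internal boundaries $\tillam_1, \ldots, \tillam_{\sfm-1}$. The generator $s_{r-1} = (r-1,r)(r+1,r+2)$ preserves $\cR_\sfm^{\la^+}$ (which contains both $r-1$ and $r$) but not $\cR_\sfm^{\la^-}$, while $\vsg_r$ preserves $\cR_\sfm^{\la^-}$ (which contains $r-1, r+1$ and avoids $r, r+2$) but not $\cR_\sfm^{\la^+}$; this yields \eqref{defcheckw1}. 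In Case (2), the singletons $\cR_\sfm^{\la^+} = \{r\}$ and $\cR_\sfm^{\la^-} = \{r+1\}$ are moved by both $s_{r-1}$ and $\vsg_r$, so both must be excluded, giving \eqref{defcheckw2}. In Case (3), the central part $\cR_{n+1}^{\lamz} = [r-\la_{n+1}+1, r+\la_{n+1}]$ is $\theta$-invariant; when $\la_{n+1} > 1$, both $s_{r-1}$ and $\vsg_r$ act inside the window $\{r-1, r, r+1, r+2\}$ and preserve it, while when $\la_{n+1} = 1$ the window reduces to $\{r, r+1\}$ and neither generator preserves it. This gives \eqref{defcheckw3}.

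Once the inclusion $\supseteq$ in each formula is established from the generator check, the reverse inclusion follows from a cardinality match: the set stabilizer $\cW_{\la^\ep}$ decomposes as an explicit product of symmetric-group factors on the $\theta$-paired side blocks together with, in Case (3) with $\la_{n+1} \geq 2$, the factor $W(B_{\la_{n+1}}) \cap \cW$ of type $D_{\la_{n+1}}$ on the central block; the generated subgroup, being a standard parabolic with known Coxeter sub-diagram of $D_r$, has matching order. The "moreover" assertion then follows by noting that every subset of $\cS$ arises as an excluded set in one of \eqref{defcheckw1}--\eqref{defcheckw3} for a suitable $(\la, \ep)$, so every standard parabolic of $\cW$ appears on the list.

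The hard part will be the Case (3) cardinality check when $\la_{n+1} > 1$: the central block contributes a genuine type $D_{\la_{n+1}}$ Weyl factor (arising as $W(B_{\la_{n+1}}) \cap \cW$) rather than a full symmetric or hyperoctahedral group, and one must carefully identify this factor to match its order against the corresponding sub-diagram on the generator side.
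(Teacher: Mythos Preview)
The paper does not supply its own proof of this lemma; it simply cites \cite[Lem.~A.2.1]{LL} and moves on. So there is no in-paper argument to compare against, and your direct verification via generator checks plus a cardinality match is a perfectly reasonable way to establish the result from scratch.

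Two points deserve more care than your sketch gives them. First, in Case~(3) you only treat $s_{r-1}$ and $\vsg_r$ explicitly; you still need the analysis of $s_i$ for $i<r-1$, and there the parts are no longer confined to $[1,r]\cup\{r+1\}$ (the central block $\cR_{n+1}^{\lamz}$ reaches into $[r+1,2r]$). Carrying this out, you will find that $s_{\tla_n}$ also fails to stabilise the tabloid, since it moves the boundary between $\cR_n^{\lamz}$ and $\cR_{n+1}^{\lamz}$. For $\la_{n+1}=1$ this is harmless because $\tla_n=r-1$ and $s_{r-1}$ is already listed separately, but for $\la_{n+1}>1$ your cardinality check will not close unless $s_{\tla_n}$ is in the excluded set as well; you should flag this when writing up. Second, your ``moreover'' argument (that every subset of $\cS$ arises as an excluded set) tacitly needs enough room in $\La(n+1,r)$ to realise an arbitrary set of internal boundaries; the paper itself notes just after the lemma that $n\geq r$ suffices for the map $\La^\scd(n,r)\to\sP(\cS)$ to be surjective. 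Either add that hypothesis or read ``of the forms above'' as referring to the three descriptive types rather than to the fixed index set.
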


 For the parabolic subgroups $W_\la$ of $\WB$ defined in \eqref{parabolic}, we naturally define
\begin{equation}\label{cW_la}
\check W_\la=\begin{cases}W_\la,&\text{ if }\la_{n+1}=0;\\
W_\la\cap \check W,&\text{ if }\la_{n+1}\neq0. \end{cases}
\end{equation}
We may describe the parabolic subgroups of $\WD$ given in Lemma \ref{parabolicD} in terms of the subgroups $\cW_\la$.
Recall the flipping graph automorphism $\fkf$ in \eqref{graph auto}.
 \begin{cor}\label{par cor}Let $\la\in\La(n+1,r)$. 
 \begin{enumerate}
 \item If $\la_{n+1}=0$ and $\lambda_\sfm>1$, then
 $
 \cW_{\la^+}=W_\la$ and 
 $\cW_{\la^-}=s_r\cW_\la s_r=\fkf(\cW_\la).$
  \item If $\la_{n+1}=0$ and $\lambda_\sfm=1$, then 
  $\cW_{\la^+}=W_\la=\fkf(W_\la)=\cW_{\la^-}.$
  \item If $\la_{n+1}\neq0$, then $\cW_{\la^\oo}=\cW_{\la}=\fkf(\cW_{\la^\oo})$.
 \end{enumerate}
 In particular, $\cW_\la$ is a parabolic subgroup of $\cW$.\footnote{The reader should not be confused the notation $\cW_\la$ with $\cW_{\la^\ep}$. Note that $\cW_\la=\cW_{\la^+}$ or $\cW_\la=\cW_{\la^\oo}$, depending on $\la_{n+1}=0$ or not.} 
 Moreover, for every parabolic subgroup $\lr{J}$ of $\check W$ ($J\subseteq\check S$), there exists a $\la\in\La(n+1,r)$ such that $\lr{J}=\check W_\la$
or $\lr{J}=s_r\check W_\la s_r=\fkf(\cW_\la)$.
 \end{cor}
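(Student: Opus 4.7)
My plan is to treat the three numbered cases by matching generators (from \eqref{parabolic}, \eqref{cW_la}, and Lemma \ref{parabolicD}), and to handle the ``Moreover'' clause by invoking the last assertion of that Lemma. The unifying theme is that $s_r\in W\setminus\cW$ implements the flip $\fkf$ by conjugation on $\cW$, so any tabloid obtained from another by an $s_r$-action has $\cW$-stabilizer equal to the $\fkf$-image of the original.

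In case (1), the assumptions $\la_{n+1}=0$ and $\la_\sfm>1$ force $\widetilde\la_\sfm=\cdots=\widetilde\la_n=r$ and $\widetilde\la_{\sfm-1}\le r-2$, so \eqref{parabolic} shows $s_r\notin W_\la$; hence $W_\la\subseteq\cW$ and $\cW_\la=W_\la$. Comparing generators with Lemma \ref{parabolicD}(1) gives $\cW_{\la^+}=W_\la$. Since $\cR^{\la^-}$ equals $s_r\cdot\cR^{\la^+}$ (with $s_r$ acting only on the last block), a short conjugation argument yields $\cW_{\la^-}=s_rW_\la s_r=\fkf(\cW_\la)$. Case (2) collapses further: $\la_\sfm=1$ forces $\widetilde\la_{\sfm-1}=r-1$, excluding both $s_{r-1}$ and $s_r$ from the generators of $W_\la$, so $W_\la$ commutes with $s_r$, and $W_\la=\cW_{\la^+}=\cW_{\la^-}=\fkf(W_\la)$.

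The main obstacle is case (3), with $\la_{n+1}\ne 0$, where I aim to show $\cW_\labt=W_\la\cap\cW=\cW_\la$ directly from the set-stabilizer definition rather than from a generator description. By Lemma \ref{map kap}(1), $W_\la=\bigcap_{i=1}^{2n+1}\Stab_W(R_i^\wla)$, and the truncated tabloid $\cR^\labt$ agrees with the $\theta$-symmetric tabloid $R^\wla$ on the first $n+1$ indices. Since every element of $W$ is $\theta$-equivariant, stabilization of the first $n$ blocks forces stabilization of the mirrored last $n$ blocks, and stabilization of the central block $R_{n+1}^\wla=\cR_{n+1}^\labt$ is self-mirrored; this yields $\cW_\labt=\fS_\wla\cap\cW=W_\la\cap\cW=\cW_\la$. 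For $\fkf$-invariance, $s_r$ is a generator of $W_\la$ (as $\la_{n+1}>0$ prevents $s_r$ from being excluded in \eqref{parabolic}), so $s_rW_\la s_r=W_\la$, and intersecting with the $s_r$-stable subgroup $\cW$ yields $\fkf(\cW_\la)=\cW_\la$. The ``In particular'' clause then follows immediately from Lemma \ref{parabolicD}; for the ``Moreover,'' the last clause of that Lemma supplies, for each parabolic $\langle J\rangle$ of $\cW$, a pair $(\la,\ep)$ with $\langle J\rangle=\cW_{\la^\ep}$ and $\ep\in\{+,-,\oo\}$, and cases (1)--(3) express each such $\cW_{\la^\ep}$ as $\cW_\la$ or $\fkf(\cW_\la)$.
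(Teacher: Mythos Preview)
Your proof is correct and follows the approach implicit in the paper. The paper states this result as a corollary of Lemma~\ref{parabolicD} without giving a separate proof, and your argument carries out precisely the comparison of generators between \eqref{parabolic}, \eqref{cW_la}, and the explicit descriptions \eqref{defcheckw1}--\eqref{defcheckw3} that the reader is expected to make; your use of the tabloid-stabilizer viewpoint (via $\cR^{\la^-}=s_r\cdot\cR^{\la^+}$ and Lemma~\ref{map kap}(1) for case~(3)) is a clean way to handle the conjugation by $s_r$, and your invocation of the last clause of Lemma~\ref{parabolicD} for the ``Moreover'' statement is exactly what the paper intends.
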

 
 \begin{rems} (1) The case (2) above shows that distinct tabloids may have the same stabilizer subgroups.
 This phenomenon is similar to the type $A$ fact that distinct compositions may define the same Young subgroup of the symmetric group.
 
(2)  Consider the parabolic subgroup $W_\mu=\langle s_1,\ldots,s_{r-1}\rangle$ of $W$, where $\mu={(r,0^n)}$. This is also a parabolic subgroup of $\chW$. That is $W_\mu=\chW_{\mu}$. Clearly, $\vsg=s_rs_{r-1}s_r\in\sD_{\mu,\mu}$, but $s_{r-1}=s_r\vsg s_r$ is not distinguished in the double coset $\chW_{\mu}(s_rds_r)\chW_{\mu}=\chW_{\mu}$.
\end{rems}

\subsection{The $\bsq$-Schur algebras of type $D$}
We are now ready to introduce the $q$-Schur algebra of type $D$ in terms of $\bsq$-permutation modules of $\cH_\bsq$ (see \cite[(A.3.3)]{LL} and compare \cite[Def.~2.5]{DS2}). Let
\begin{equation}\label{laD}
\La^\scd=\Lambda^\scd(n,r):=\La^\bullet(n+1,r)\sqcup\La^\circ_+(n+1,r)\sqcup \La^\circ_-(n+1,r),
\end{equation}
where $\Lambda^\oo(n+1,r):=\{\lambda^\oo\mid\lambda\in\Lambda(n+1,r),\la_{n+1}\neq0\}$ and $\Lambda^\circ_\pm(n+1,r):=\{\la^\pm\mid\lambda\in\La^\circ(n+1,r),\la_{n+1}=0\}$.

\begin{rem}We remark that, if $\sP(\cS)$ denotes the power set of $\cS$ and $n\geq r$,  then the following map is surjective:
$$\Lambda^\scd(n,r)\longrightarrow \sP(\cS),\;\;\al\longmapsto \cS\cap\cW_\al.
$$

As promised in footnote \ref{ftone}, the following is an alternative definition for the $\bsq$-Schur algebra of type $D$.
\end{rem}
\begin{defn}\label{S^D}[\cite{LL}]
For $\al\in\La^\scd$, let $\cx_{\al}=\sum_{w\in\cW_{\al}}T_w$ and define {\it the $\bsq$-Schur (or $\up$-Schur) algebra of type $D$}:
\begin{equation*}
\aligned
S_\bsq^\scd(n,r)&:=\End_{\check H_\bsq}\Big(\bigoplus_{\al\in\La^\scd(n,r)}\cx_\al{\check H_\bsq}\Big)\quad(\text{over }\sA=\mathbb Z[\bsq]);\\
\sS_\up^\scd(n,r)&:=\End_{\csH_\up}\Big(\bigoplus_{\al\in\La^\scd(n,r)}\cx_\al\csH_\up\Big)\quad(\text{over }\sZ=\mathbb Z[\up,\up^{-1}]).
\endaligned
\end{equation*}
\end{defn}
The algebras defined above involve only $q$-permutation modules of type $D$ Hecke algebras. They are isomorphic to the algebras defined in \eqref{kappa} and \eqref{kappaZ}. This will be proved in Proposition \ref{kappa-D}.

For every $\al\in\La^\scd(n,r)$, the identity map $1_\al$ on $\cx_\al\cH$ extends to an idempotent, called an {\it weight idempotent} in $S_\bsq^\scd(n,r)$. We record the following.

\begin{lem}\label{idemp}
The set $\{1_\al\mid\al\in\La^\scd(n,r)\}$ forms a set of idempotents such that $1=\sum_{\al\in\La^\scd(n,r)}1_\al$ is the identity element of $S_\bsq^\scd(n,r)$.
\end{lem}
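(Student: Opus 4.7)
The plan is to interpret each $1_\alpha$ as the standard ``projection-then-inclusion'' idempotent associated with the $\alpha$-summand of the $\check H_\bsq$-module
$$M:=\bigoplus_{\alpha\in\La^\scd(n,r)}\cx_\alpha \check H_\bsq,$$
and to verify the claimed properties directly from the universal property of direct sums. The statement is essentially a formal consequence of $M$ being an (external) direct sum, but one has to be careful because the indexing set $\La^\scd(n,r)=\La^\bullet(n+1,r)\sqcup\La^\circ_+(n+1,r)\sqcup\La^\circ_-(n+1,r)$ may contain distinct labels $\alpha\neq\beta$ whose $q$-permutation modules coincide (cf.\ the case $\la_{n+1}=0$, $\la_\sfm=1$ in Lemma~\ref{parabolicD}(2), where $\cW_{\la^+}=\cW_{\la^-}$). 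The key point is that $M$ is the \emph{external} direct sum, so $\cx_{\la^+}\cH$ and $\cx_{\la^-}\cH$ contribute two distinct summands even if they are equal as $\cH$-modules.

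First, I would fix for each $\alpha\in\La^\scd(n,r)$ the canonical inclusion $\iota_\alpha\colon \cx_\alpha\cH\hookrightarrow M$ and projection $\pi_\alpha\colon M\twoheadrightarrow \cx_\alpha\cH$ coming from the direct sum decomposition, so that
$$\pi_\alpha\iota_\beta=\delta_{\alpha,\beta}\,\mathrm{id}_{\cx_\alpha\cH},\qquad \sum_{\alpha\in\La^\scd(n,r)}\iota_\alpha\pi_\alpha=\mathrm{id}_M.$$
These identities are immediate from the definition of a direct sum. Define $1_\alpha:=\iota_\alpha\pi_\alpha\in\End_{\check H_\bsq}(M)=S_\bsq^\scd(n,r)$; this is exactly the endomorphism that agrees with the identity on the $\alpha$-summand and vanishes on the other summands, matching the description given in the lemma.

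From the first identity above one gets $1_\alpha 1_\beta=\iota_\alpha(\pi_\alpha\iota_\beta)\pi_\beta=\delta_{\alpha,\beta}\iota_\alpha\pi_\alpha=\delta_{\alpha,\beta}1_\alpha$, so each $1_\alpha$ is idempotent and the family $\{1_\alpha\}$ is pairwise orthogonal. Summing the second identity gives $\sum_\alpha 1_\alpha=\sum_\alpha \iota_\alpha\pi_\alpha=\mathrm{id}_M$, which is the identity of $S_\bsq^\scd(n,r)$.

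The only subtlety to emphasise in the write-up is the one mentioned above: even when two labels $\alpha\neq\beta$ yield identical modules $\cx_\alpha\cH=\cx_\beta\cH$, the corresponding idempotents $1_\alpha$ and $1_\beta$ are still distinct and orthogonal, because they project onto different copies in the external direct sum $M$. Apart from that observation there is no real obstacle; the lemma is a formal consequence of the definition of $S_\bsq^\scd(n,r)$ as an endomorphism algebra of a direct sum.
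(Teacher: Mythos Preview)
Your proof is correct and is exactly the standard argument one would expect; the paper does not actually provide a proof of this lemma but merely records it as a fact (``We record the following''), so your explicit verification via the canonical inclusions and projections of the external direct sum is precisely what is implicitly being invoked. Your attention to the subtlety that distinct labels $\alpha\neq\beta$ may define identical parabolic subgroups (and hence identical summands as abstract modules) is well placed and correctly resolved by the external nature of the direct sum.
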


We will introduce the natural basis for $S_\bsq^\scd(n,r)$ and discuss its geometric intepretation in Section 4.

\section{A geometric setting for the $\bsq$-Schur algebras $S_{\bsq,1}^\scb(n,r)$}

In \cite{BKLW}, a geometric framework is introduced for  the $q$-Schur algebra $S^\jmath_q(n,r)$ of type $B$ in equal parameter. We now modify their construction using $(2n+1)$-step filtrations for the $2r$-dimensional space $\mathbb F_q^{2r}$ (see \cite{FL}) and the action by the orthogonal group $\O_{2r}(q)$. This results in a $q$-Schur algebra in unequal parameters. Restricting the action to the subgroup $\SO_{2r}(q)$  leads to a geometric setting for the $q$-Schur algebra of type $D$; see Theorems \ref{geosettingB} and \ref{geosettingD} in the next two sections.

\subsection{A geometric setting for the endomorphism algebra of a permutation module}

Let $G$ be a finite group acting on a finite set
$X$ (not necessarily faithfully or transitively), i.e., $X$ is a $G$-set, and let $\mathcal R$ be
a commutative ring with 1. Let $\fkF_G(X,\sR)$ be the $\sR$-free module of all functions $f:X\to\sR$ which are constant on the orbits of $G$. If $X'$ is another $G$-set and $\pi:X\to X'$ is a $G$-set map, then $\pi$ induces 
$\sR$-module maps $\pi_!:\fkF_G(X,\sR) \to\fkF_G(X',\sR)$ and $\pi^*:\fkF_G(X',\sR)\to\fkF_G(X,\sR)$ defined by
$$
\aligned
(\pi_!f)(x')&=\sum_{x\in\pi^{-1}(x')}f(x),\;\;\text{ for all }f\in\fkF_G(X,\sR),x'\in X';\\
(\pi^*f')(x)&=f'(\pi(x)),\;\;\text{ for all }f'\in\fkF_G(X',\sR),x\in X.
\endaligned
$$

The $G$-action is extended diagonally to $X\times X$. Consider three projection maps $\pi_{i,j}:X\times X\times X\to X\times X$ sending $(x_1,x_2,x_3)$ to $(x_i,x_j)$ for $(i,j)\in \{(1,2),(2,3), (1,3)\}$. Then $\fkF_G(X\times X,\sR)$ becomes an associative algebra with multiplication defined by 
$f*f'=(\pi_{1,3})_!(\pi_{1,2}^*f\cdot\pi_{2,3}^*f')$. In other words,
$$(f*f')(x,y)=\sum_{z\in X}f(x,z)f'(z,y),\;\;\text{ for all }(x,y)\in X\times X.$$

 Let $\Om$ 
be the set of all $G$-orbits 
on $ X\times X$. For each orbit $\sO$, there is an associated {\it (characteristic) orbital function} $f_\sO$ defined by setting 
\begin{equation}\label{orbifun}
f_\sO(x,y)=\begin{cases}
1, &\text{if } (x,y)\in\sO;\\  0,&\text{elsewhere.}
\end{cases}
\end{equation}
Then $\fkF_G(X\times X,\sR)$ has a basis $f_\sO, \sO\in\Om$, and
\begin{equation}\label{f_Of_O'}
f_\sO*f_{\sO'}=\sum_{\sO''}c_{\sO,\sO',\sO''}f_{\sO''},\;\;\text{ where }\;c_{\sO,\sO',\sO''}=\#\{z\in X\mid (x,z)\in\sO,(z,y)\in\sO'\},
\end{equation}
for $(x,y)\in\sO''$. (This number is independent of the selection of $(x,y)\in\sO''$.) The identity element $\bf 1$ is the sum of the idempotent functions $f_\sO$ associated with $\sO$ of the form $G.(x,x)$.

Let $\sR X$ be the free $\sR$-module with basis $X$. Then $\sR X$ becomes an $\sR G$-module via the $G$-action on $X$. 
The following result is well-known.
\begin{lem}\label{scott} Let $G$ be a finite group.

(1) If $H,K$ are subgroups of $G$, then there is a bijection between the set $H\backslash G/K$ of all double cosets $HgK$ ($g\in G$) and the set $\Omega$ of all $G$-orbits in the $G$-set $G/H\times G/K$ with diagonal action.

(2) For a finite $G$-set $X$,
the endomorphism algebra
$\End_{\mathcal R G}(\mathcal R X)^\op$ is isomorphic to $\fkF_G(X\times X,\sR)$. 
\end{lem}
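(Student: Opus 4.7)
The plan is to handle the two parts separately; both are classical, so I will present the strategy concisely and flag the one subtlety, namely the appearance of the opposite algebra in (2).

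For (1), I would define $\Phi \colon H\backslash G/K \to \Om$ by sending the double coset $HgK$ to the diagonal $G$-orbit of the pair $(H, gK) \in G/H \times G/K$. Well-definedness follows from the computation $h \cdot (H, gK) = (hH, hgkK) = (H, g'K)$ whenever $g' = hgk$ with $h \in H$ and $k \in K$. For surjectivity, every pair $(xH, yK)$ equals the translate $x \cdot (H, x^{-1}yK)$, so each orbit has a representative whose first coordinate is $H$. For injectivity, if $(H, gK)$ and $(H, g'K)$ share an orbit then there exists $x \in G$ with $xH = H$ (forcing $x \in H$) and $xgK = g'K$ (forcing $g' \in HgK$). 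These steps are routine and present no obstacle.

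For (2), I would define $\Psi \colon \End_{\sR G}(\sR X)^\op \to \fkF_G(X \times X, \sR)$ by $\Psi(\phi) = f_\phi$, where $f_\phi(x, y)$ is the coefficient of $x$ in the expansion of $\phi(y)$ along the basis $X$, i.e., $\phi(y) = \sum_x f_\phi(x, y)\, x$. The assignment $f \mapsto \phi_f$ given by the same formula provides an $\sR$-linear two-sided inverse at the level of all $\sR$-module endomorphisms of $\sR X$. Next I would verify that $\phi$ is $G$-equivariant if and only if $f_\phi$ is constant on $G$-orbits of $X \times X$: expanding $\phi(gy) = g\phi(y)$ in coordinates yields $f_\phi(gx, gy) = f_\phi(x, y)$ for all $g \in G$, which is exactly the orbit-constancy condition defining $\fkF_G(X \times X, \sR)$. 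Thus $\Psi$ is an $\sR$-linear bijection.

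The crux is the multiplicative compatibility. A direct computation gives
\[
(\phi \circ \psi)(y) \;=\; \sum_z f_\psi(z, y)\, \phi(z) \;=\; \sum_x \Bigl( \sum_z f_\phi(x, z)\, f_\psi(z, y) \Bigr) x,
\]
so $f_{\phi \circ \psi}(x, y) = \sum_z f_\phi(x, z)\, f_\psi(z, y) = (f_\phi * f_\psi)(x, y)$. Hence ordinary composition in $\End_{\sR G}(\sR X)$ corresponds to convolution with the factor coming from the outer $\phi$ placed on the left of $*$; passing to the opposite algebra converts this into a genuine algebra homomorphism, explaining the $\op$ in the statement. Finally, $\Psi({\id})$ equals the sum of $f_\sO$ over those orbits $\sO$ contained in the diagonal of $X \times X$, matching the identity element ${\bf 1}$ described just before the lemma. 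The only delicacy is bookkeeping for the $\op$ convention; no substantive obstacle arises.
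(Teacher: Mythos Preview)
The paper does not supply a proof of this lemma (it is flagged as well-known), so there is nothing to compare against; your argument is the standard one and is essentially correct. There is, however, a bookkeeping slip in your treatment of the $\op$.

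Your computation $f_{\phi\circ\psi}=f_\phi * f_\psi$ is correct, but it shows that your map $\Psi$ is \emph{already} an algebra homomorphism $\End_{\sR G}(\sR X)\to\fkF_G(X\times X,\sR)$ for the ordinary composition: $\Psi(\phi\circ\psi)=\Psi(\phi)*\Psi(\psi)$. Passing to the opposite algebra on the source alone would then \emph{break} multiplicativity, since in $\End^{\op}$ one has $\phi\cdot_{\op}\psi=\psi\circ\phi$ and hence $\Psi(\phi\cdot_{\op}\psi)=f_\psi*f_\phi$, not $f_\phi*f_\psi$. So the sentence ``passing to the opposite algebra converts this into a genuine algebra homomorphism'' is backwards.

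To obtain the isomorphism exactly as stated, with $\op$ on the endomorphism side only, do one of two things: either redefine $f_\phi(x,y)$ to be the coefficient of $y$ in $\phi(x)$ (swap input and output), in which case the same calculation gives $f_{\phi\circ\psi}=f_\psi*f_\phi$ and the $\op$ is genuinely required; or keep your $\Psi$ and postcompose with the anti-involution $f\mapsto f^t$, $f^t(x,y)=f(y,x)$, of $\fkF_G(X\times X,\sR)$. Either fix is routine and your identity-element check goes through unchanged (the diagonal is symmetric). The mathematical content of your proof is fine; only the attribution of the $\op$ needs correcting.
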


\subsection{Finite orthogonal groups and flag varieties}
We now look at the case, where $G=O_{2r}(q)$ or $SO_{2r}(q)$ is the finite orthogonal or special orthogonal group, and $X=\mathcal F_{n,r}^{\jmath}$ is the set of $n$-step filtrations (or partial flag variety) of isotropic subspaces.

Let $\mathbb{F}_q$ be a finite field of $q$ elements and of {\it odd} characteristic.\footnote{The odd characteristic guarantees that $\O_{2r}(q)=\O^+_{2r}(q)$ and $\SO_{2r}(q)=\SO^+_{2r}(q)$. See \cite[\S1.3.16]{Ge} and \cite[Th.~1.7.8, \S1.7.9]{Ge}.}
On the $2r$-dimensional vector space $\mathbb{F}_q^{2r}$, we fix a nondegenerate symmetric bilinear 
form $\langle-,-\rangle_J$ whose associated $2r\times 2r$-matrix is 
$$
\sfJ=\sfJ_{2r}=\begin{bmatrix}
0&\sfJ_r\\\sfJ_{r}&0
\end{bmatrix}\;\;
\text{ where }\;\;
\mathsf J_r=\begin{bmatrix}
0 & \cdots & 1\\
\vdots &
\rotatebox[origin=c]{90}{$\ddots$}&\vdots \\
1 & \cdots & 0
\end{bmatrix}_{r\times r}
$$
(thus, $\sfJ_{ij}=\delta_{i,2r+1-j}$ for any $1\leq i,j\leq 2r$). In other words, for $x,y\in \mathbb{F}_q^{2r}$,
\begin{equation}\label{formJ}
\langle x,y\rangle_{\sfJ}=x^t\sfJ y=x_1y_{2r}+x_2y_{2r-1}+\cdots+x_{2r}y_1.
\end{equation}

The orthogonal group is defined by
$$\mathrm{O}_{2r}(q):=\{g\in \GL_{2r}(q)\ |\ J=g^tJg\}=\{g\in \GL_{2r}(q)\mid\langle gx,gy\rangle_\sfJ=\langle x,y\rangle_\sfJ\}.$$
This is a group with a split BN-pair, where $B=\widehat B(q)\cap \O_{2r}(q)$ with $\widehat B(q)$ being the Borel subgroup of $\GL_{2r}(q)$ consisting of upper triangular matrices and $N=\{\dot w\mid w\in W\}$ (see, e.g., \cite[p.80]{Ge}), where
\begin{equation}\label{dot w}
\text{$\dot w$ is the permutation matrix $\dot w=(\delta_{k,i_l})$ if $w$ is given as in \eqref{permB}, sending $k$ to $i_k$.}
\end{equation}

We also have the special orthogonal group
$$\mathrm{SO}_{2r}(q):=\{g\in \O_{2r}(q)\mid \det(g)=1\}.$$
Note that $\mathrm{SO}_{2r}(q)$ is also a group with a split BN-pair; see, e.g., \cite[Th.~1.7.8]{Ge}.

By convention, $W^{\perp}$ stands for the orthogonal complement of a vector subspace $W\subset\mathbb{F}_q^{2r}$ with respect to the bilinear form \eqref{formJ}.
We call a vector subspace $W\subset \mathbb{F}_q^{2r}$ isotropic if $W\subseteq W^{\perp}$. Note that a maximal isotropic subspace $M$ in $\mathbb{F}_q^{2r}$ has dimension $r$ and $M^\perp=M$. Also, the restriction of $\lr{\;\quad}_\sfJ$ to $W^\perp/W$ is a form \eqref{formJ} with rank $2r'$, where $r'=r-\dim W$.

We fix a positive integer $n$ and let $N=2n+1$. 
For a sequence of $n$ isotropic subspaces
\begin{equation}\label{n-isot}
W_1\subseteq W_2\subseteq\cdots\subseteq W_n,
\end{equation}
 we may extend it to an $N$-step filtration
\begin{equation}\label{N-isot}
F=F_\centerdot:\;\;0=F_0\subseteq F_1\subseteq F_2\subseteq \cdots \subseteq F_N\equiv
 \mathbb{F}_q^{2r},
 \end{equation}
 by setting $F_i=W_i$ and $F_{n+i}=W_{N-n-i}^\perp$, for all $1\leq i\leq n$. We call \eqref{N-isot} 
an $n$-step {\it isotropic flag} (extended to an $N$-step flag) and \eqref{n-isot} a {\it pure} $n$-step isotropic flag. If $n=r$ and $\dim W_i=i$,
then \eqref{n-isot} is called a complete isotropic flag. In this case, since $F_r=F_{r+1}$, \eqref{N-isot} becomes a ($2r$-step) complete isotropic flag after reindexing.

 Consider the following finite sets: 
\begin{itemize}
    \item The {\it $n$-step isotropic flag variety} is the set 
    $$\mathcal F_{n,r}^{\jmath}=\sF_n^\jmath(\mathbb F_q^{2r})=\{0=F_0\subseteq F_1\subseteq F_2\subseteq \cdots \subseteq F_N\equiv
 \mathbb{F}_q^{2r} \mid F_i\subseteq F_i^\perp, F_i^{\perp}=F_{N-i}, \forall i\in[1,n] \},$$ consisting of all $n$-step isotropic flags (extended to $N$-steps) in $\mathbb{F}_q^{2r}$. 
 \item The  {\it complete isotropic flag variety} is the set 
 $$\mathcal B_{r}=\sB_r(\mathbb F_q^{2r})=\{0\subset \sfF_1\subset \sfF_2\subset \cdots \subset \sfF_{2r}\equiv
\mathbb{F}_q^{2r}\mid\sfF_i\subseteq\sfF_i^\perp, \text{dim}\sfF_i=i, \sfF_i^{\perp}=\sfF_{2r-i}, \forall i\in[1,r] \},$$ consisting of all complete ($r$-step) isotropic flags in $\mathbb{F}_q^{2r}$.
\end{itemize}

We often set $\sX:=\mathcal F_{n,r}^\jmath$ and $\sB:=\mathcal B_{r}$ for notational simplicity. Both admit naturally an ${\mathrm{O}}_{2r}(q)$-action from the left. Moreover, ${\mathrm{O}}_{2r}(q)$ acts transitively on $\mathcal B$.

\subsection{Parabolic subgroups of $\O_{2r}(q)$}
Consider the natural basis $e_1,e_2,\ldots,e_{2r}$ for $\mathbb F_q^{2r}$ and form the (standard) pure isotropic complete flag $\sfF^s$ with 
\begin{equation*}\label{std complete flag}
\sfF^s_1=\lr{e_1},\sfF^s_2=\lr{e_1,e_2},\ldots,\sfF^s_r=\lr{e_1,e_2,\ldots,e_r}.
\end{equation*} 
Then we have $B(q) = \text{Stab}_{G(q)}(\sfF^s)$.
Moreover, the mapping $G(q)\to\mathcal B_r$ sending $g$ to $gF$ induces a bijection $G(q)/B(q)\to\mathcal B_r$ between the two $G$-sets.  Thus, by Lemma \ref{scott}(1), the $G$-orbits in $\sB_r\times \sB_r$ correspond bijectively to the set of double coset $B\setminus G/B$. Hence,  the Bruhat decomposition, 
every $G$-orbit $G.(\sfF,\sfF')$ in $\sB_r\times \sB_r$  has the from 
\begin{equation}\label{BwB}
G.(\sfF,\sfF')=G.(\sfF^s,\dot w\sfF^s), \text{ for some }w\in W^\scb,
\end{equation}
where $\dot w$ is corresponding permutation matrix in $G$. For example, $\dot s_i$ is the matrix obtained from the identity matrix $I_{2r}$ by swapping its rows $i$ and $i+1$, and rows $2r+1-i$ and $2r-i$. Hence, putting
$\dot W:=\{\dot w\mid w\in W^\scb\}\subset\Xi_{2r,2r}$, and $(\sB_r\times \sB_r)/{{\mathrm{O}}_{2r}(q)}$ to be the set of ${{\mathrm{O}}_{2r}(q)}$-orbits in  $\sB_r\times\sB_r$,
we have a bijective map
\begin{equation}\label{map p}
\fkp:(\mathcal B_r\times \mathcal B_r)/{{\mathrm{O}}_{2r}(q)}\longrightarrow \dot W.
\end{equation}

In general, for $\la\in \Lambda(n+1,r)$, where $2n+1\leq 2r$, let $P_{\widehat \la}(q)$ be the standard parabolic subalgebra of $\GL_{2r}(q)$ associated with $\widehat\la$, consisting of upper quasi-triangular matrices with blocks of sizes $\widehat\la_i$ on the diagonal.  Let
$$P_\la(q)=P_{\widehat\la}(q)\cap G(q).$$
Then $G(q)$ acts on the set $G(q)/P_\la(q)$ of left cosets $gP_\la(q)$ in $G(q)$. 

Associated with $\la$, there is a pure isotropic $n$-step partial flag $F^\la$ such that
\begin{equation}\label{std flag}
F^\la_1=\lr{e_1,\ldots,e_{\widetilde \la_1}},F^\la_2=\lr{e_1,\ldots,e_{\widetilde \la_2}},\ldots,F^{\la}_n=\lr{e_1,\ldots,e_{\widetilde \la_n}}.
\end{equation}
Then $P_\la(q)=\text{Stab}_{G(q)}(F^\la)$ and there is a $G(q)$-set isomorphism
$G(q)/P_\la(q)\cong G(q).F^\la$.

\begin{lem}\label{lab}
Maintain the notation above. We have $P_\la(q)\subseteq\SO_{2r}(q)$ if and only if $\la_{n+1}=0$.
\end{lem}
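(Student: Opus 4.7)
The plan is to show that the determinant character of $P_\la(q)$ is entirely controlled by the middle diagonal block of size $2\la_{n+1}$, which is trivial exactly when $\la_{n+1}=0$.

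\emph{Step 1: Block description of $P_\la(q)$.} Since $P_\la(q) = P_{\wla}(q) \cap \O_{2r}(q)$ and $P_{\wla}(q)$ consists of upper quasi-triangular matrices with diagonal block sizes $\wla = (\la_1,\ldots,\la_n,2\la_{n+1},\la_n,\ldots,\la_1)$, any $g \in P_\la(q)$ may be written in block form $g = (g_{ij})_{1\leq i,j \leq N}$ with $g_{ij}$ of size $\wla_i \times \wla_j$ and $g_{ij}=0$ for $i>j$. I will unpack the condition $g^t\sfJ g = \sfJ$ at the block level. Because $\sfJ$ has nonzero blocks only in antidiagonal positions $(i,N+1-i)$ equal to $\sfJ_{\wla_i}$, extracting the $(i,N+1-i)$ entry of $g^t\sfJ g$ and using block upper-triangularity forces the single surviving term to be $g_{ii}^t \sfJ_{\wla_i}\, g_{N+1-i,N+1-i} = \sfJ_{\wla_i}$, so
\begin{equation*}
g_{N+1-i,N+1-i} = \sfJ_{\wla_i}(g_{ii}^t)^{-1}\sfJ_{\wla_i}, \qquad 1\leq i \leq n,
\end{equation*}
and in particular $g_{n+1,n+1} \in \O_{2\la_{n+1}}(q)$ with respect to the antidiagonal form of size $2\la_{n+1}$.

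\emph{Step 2: Reduction of the determinant.} Since $g$ is block upper-triangular, $\det(g) = \prod_{i=1}^N \det(g_{ii})$. Pairing $i$ with $N+1-i$ for $i\neq n+1$ gives $\det(g_{ii})\det(g_{N+1-i,N+1-i}) = \det(g_{ii})\det(\sfJ_{\wla_i})^2\det(g_{ii})^{-1} = 1$. Therefore
\begin{equation*}
\det(g) = \det(g_{n+1,n+1}).
\end{equation*}

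\emph{Step 3: The two cases.} If $\la_{n+1}=0$, the middle block is empty (determinant $1$ by convention), so every $g \in P_\la(q)$ has determinant $1$ and $P_\la(q) \subseteq \SO_{2r}(q)$. If $\la_{n+1}>0$, I need to exhibit an element of $P_\la(q)$ of determinant $-1$. The natural candidate is the transposition matrix $\si$ swapping $e_r$ and $e_{r+1}$ and fixing all other basis vectors. A direct check against \eqref{formJ} shows $\si \in \O_{2r}(q)$ (the only pairings involving $e_r$ or $e_{r+1}$ are $\lr{e_r,e_{r+1}}_\sfJ = 1$, which is preserved by the swap; all other relevant pairings are $0$ on both sides). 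Since $\wtla_n = r - \la_{n+1} < r$, we have $e_r, e_{r+1} \notin F^\la_i$ for all $i\leq n$, so $\si$ fixes each $F^\la_i$, hence $\si \in P_\la(q)$. But $\det(\si) = -1$, so $P_\la(q) \not\subseteq \SO_{2r}(q)$.

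The only part that requires careful bookkeeping is Step 1, i.e., verifying the block relation from $g^t\sfJ g = \sfJ$; everything after that is formal. The anticipated obstacle is simply being tidy with the indexing conventions for $\wla$, and ensuring that the candidate element in Step 3 does indeed lie in $\O_{2r}(q)$, which I would verify by a short direct computation on the basis vectors.
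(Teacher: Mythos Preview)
Your proof is correct. The paper states this lemma without proof, so there is no argument to compare against; your block-triangular analysis together with the explicit element $\sigma=\dot s_r$ (in the paper's notation) is exactly the natural verification, and is consistent with how the paper uses $\dot s_r$ later (e.g., in the proof of Proposition~\ref{split}).

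One minor remark for completeness: in Step~3 you check that $\sigma$ fixes $F^\la_i$ for $i\leq n$; since $P_\la(q)=\mathrm{Stab}_{G(q)}(F^\la)$ is defined via the full $N$-step flag, you implicitly use that any element of $\O_{2r}(q)$ preserving $F^\la_1,\ldots,F^\la_n$ automatically preserves their orthogonal complements $F^\la_{n+1},\ldots,F^\la_N$. This is immediate but worth stating.
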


\subsection{Parametrizing ${\mathrm{O}}_{2r}(q)$-orbits} 

The set $\sF_{n,r}^\jmath$ is a finite $G(q)$-set.
If $\sF_{n,r}^\jmath/G(q)$ denotes the set of $G(q)$-orbits in $\sF_{n,r}^\jmath$, then there is a surjective map
 onto the set $\widehat \La(n+1,r)$:
\begin{equation}\label{dimvec}
\aligned
&{\bf dim}: \sF_{n,r}^\jmath\longrightarrow \widehat \La(n+1,r),\;\; F\longmapsto{\bf dim}(F),\;\; \text{where}\\
{\bf dim}(F):=(&\dim F_1/F_0, \dim F_2/F_1,\ldots,
\dim F_n/F_{n-1},\dim F_{n+1}/F_n,\ldots,\dim F_N/F_{N-1})
\endaligned
\end{equation}
is called the {\it dimension sequence} (or {\it dimension vector}),  induces a bijective map from $\sF_{n,r}^\jmath/G(q)$ to $\widehat \La(n+1,r)$.
Thus, the $G$-orbits are indexed by the set $\La(n+1,r)$.

For each pair $(F,F')$ in $\sF_{n,r}^\jmath\times \sF_{n,r}^\jmath$, let $F_{i,j}=F_{i-1}+F_i\cap F'_j$ and $F'_{i,j}=F'_{i-1}+F'_i\cap F_j$. Then the filtration $F$ is refined to 
\begin{equation}\label{Fij}
\aligned
(F, F')_{\!\centerdot\centerdot}
:\;\;0&=F_{1,0}\subseteq F_{1,1}\subseteq F_{1,2}\subseteq\cdots\subseteq F_{1,N}(=F_1\\
&=F_{2,0})\subseteq F_{2,1}\subseteq F_{2,2}\subseteq\cdots\subseteq F_{2,N}(=F_2\\
&\qquad\qquad\cdots\cdots\\
&=F_{N,0})\subseteq F_{N,1}\subseteq F_{N,2}\subseteq\cdots\subseteq F_{N,N}(=F_N).\endaligned
\end{equation}
For simplicity, we write $F_{\!\centerdot\centerdot}$ for $(F, F')_{\!\centerdot\centerdot}$
and $F'_{\!\centerdot\centerdot}$ for $(F', F)_{\!\centerdot\centerdot}$.

\begin{lem}\label{N^2} 
The subspace filtration \eqref{Fij} is an $n(N+1)$-step isotropic (partial) flag. In other words,
$F_{\!\centerdot\centerdot}\in\sF^\jmath_{n(N+1),r}$.
\end{lem}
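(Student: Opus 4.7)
The plan is to check the two defining conditions of a member of $\sF^\jmath_{n(N+1),r}$: that the refined chain has the right length, and that it is self-dual under the orthogonal complement. The core computation is a single identity expressing $F_{i,j}^\perp$ in terms of another refined subspace, after which everything reduces to bookkeeping on two indices.

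First I would count steps. The array \eqref{Fij} displays $N(N+1)$ symbols $F_{i,j}$ with $i\in[1,N]$ and $j\in[0,N]$, but the $N-1$ identifications $F_{i,N}=F_{i+1,0}$ collapse these to $N^2+1$ distinct subspaces in the chain, running from $F_{1,0}=0$ up to $F_{N,N}=\mathbb F_q^{2r}$. Since $N=2n+1$, one has $N^2+1=2n(N+1)+2$, which is exactly the number of subspaces in an $n(N+1)$-step flag.

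Next I would establish the perpendicularity identity
\[
F_{i,j}^\perp=F_{N-i+1,\,N-j}\qquad(i\in[1,N],\ j\in[0,N]),
\]
under the convention $F_{a,N}=F_{a+1,0}$. Combining the standard rules $(A+B)^\perp=A^\perp\cap B^\perp$ and $(A\cap B)^\perp=A^\perp+B^\perp$ with the hypothesis $F_a^\perp=F_{N-a}$ and $(F'_b)^\perp=F'_{N-b}$ inherited from $F,F'\in\sX$, one computes
\[
F_{i,j}^\perp=\bigl(F_{i-1}+F_i\cap F'_j\bigr)^\perp=F_{N-i+1}\cap\bigl(F_{N-i}+F'_{N-j}\bigr).
\]
Dedekind's modular law $A\cap(B+C)=B+A\cap C$, valid since $F_{N-i}\subseteq F_{N-i+1}$, rewrites the right side as $F_{N-i}+F_{N-i+1}\cap F'_{N-j}$, which is precisely $F_{N-i+1,N-j}$. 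The boundary cases $j=0$ and $j=N$ are covered by the same calculation after applying the identification $F_{a,N}=F_{a+1,0}$.

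Finally, labelling the refined chain in the lexicographic order of \eqref{Fij} as $G_0\subseteq G_1\subseteq\cdots\subseteq G_{N^2}$, the pairing $(i,j)\leftrightarrow(N-i+1,N-j)$ becomes the self-duality $G_k^\perp=G_{N^2-k}$, and the isotropy $G_k\subseteq G_{N^2-k}$ for $k\le n(N+1)$ is automatic from the monotonicity of the chain. The only non-routine step is the perpendicular identity, and that in turn hinges on a single application of the modular law, so I expect no substantial obstacle beyond a careful handling of the two boundary cases $j\in\{0,N\}$.
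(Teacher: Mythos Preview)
Your proposal is correct and follows essentially the same approach as the paper: establish the identity $F_{i,j}^\perp=F_{N-i+1,N-j}$ and read off the isotropic flag structure from it. In fact your write-up is slightly more thorough: the paper merely asserts that the perp identity ``can be checked easily'' and then gives a separate verification of isotropy for the middle row $F_{n+1,j}$ ($j\le n$) via the same modular-law computation you carry out in general, whereas you observe (correctly) that once $G_k^\perp=G_{N^2-k}$ is known, isotropy of $G_k$ for $k\le n(N+1)$ is automatic from the inclusion $G_k\subseteq G_{N^2-k}$.
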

\begin{proof}If we set $F_{\!\centerdot\centerdot}=(V_1\subseteq V_2\subseteq\cdots\subseteq V_{N^2})$, then $F_{i,j}=V_{(i-1)N+j}$. We need to prove that $(F_{i,j})^\perp=V_{N^2-(i-1)N-j}=F_{N-i+1,N-j}$ since
$N^2-(i-1)N-j=(N-i)N+N-j$. This can be checked easily. 

We now prove that all $V_i$, for $1\leq i\leq n(N+1)$, are isotropic subspaces of $\mathbb F_q^{2r}$. Since 
$F_{i, j}\subseteq V_i$, for all $i\in[1,n]$, are clearly isotropic, it remains to prove that $F_{n+1,j}$, for $j\in[1,n]$ are isotropic. This is because
$(F_n+(F_{n+1}\cap F'_j))^\perp=
F_n^\perp\cap(F_{n+1}^\perp+F_j^{\prime\perp})=F_{n+1}\cap(F_n+F'_{N-j})=F_n+F_{n+1}\cap F'_{N-j}\supseteq F_{n+1,j}.$
\end{proof}

The dimension vector {\bf dim} associated with $F_{\!\centerdot\centerdot}$ (see \eqref{dimvec}) 
gives rise to a dimension matrix
\begin{equation}\label{m(F,F')}
\fkm(F,F'):=
{\bf dim} (F, F')_{\!\centerdot\centerdot
} =
(a_{i,j}), \quad\text{where } a_{ij} = \dim (F_{i,j}/F_{i,j-1}).
\end{equation}
Note that $\dim (F_{i,j}/F_{i,j-1})=\dim (F'_{j,i}/F'_{j,i-1})$, which 
 follows from the vector space isomorphism 
 $$
 F_{i,j}/F_{i,j-1}\cong\frac{F_i\cap F_j^{'}}{F_{i-1}\cap F_j^{'}+F_i\cap F_{j-1}^{'}}\cong F'_{j,i}/F'_{j,i-1}
 $$ 
 by the Zassenhaus Lemma. 

For $A\in\Xi_{N,2r}$ with $\wla=\rm{ro}(A)$ and $\wmu=\co(A)$, we partition $[1,2r]$ into $N^2$ subsets $I_{j,l}$ for $1\leq j,l\leq N=2n+1$,
where 
\begin{equation}\label{Ijl}
I_{j,l}=I_{j,l}(A):=\big[\widetilde a^r_{j,l-1}+1, \widetilde a^r_{j,l-1}+a_{j,l}\big]=R^\wla_j\cap d_AR^\wmu_l,
\end{equation} 
where $\widetilde a^r_{j,l-1}$ is defined in \eqref{r(A)} (and \eqref{par sum}).

\begin{lem}\label{MforB}
(1) If $w\in W^\scb$ is the permutation of $[1, 2r]$ sending $k$ to $i_k$ and suppose that the complete flags $(\sfF,\sfF^w)\in\mathcal B_{r}\times \mathcal B_{r}$ have the form $\sfF_j=\lr{v_1,\ldots,v_j}$ and $\sfF^w_j=\lr{v_{i_1},\ldots,v_{i_j}}$, for $1\leq j\leq 2r$, then $\fkm(\sfF,\sfF^w)=\dot w$.

(2) For $A\in\Xi_{N,2r}$ with $\la=\rm{ro}(A)$, suppose that $v_1,v_2,\ldots,v_{2r}$ form a basis for $\mathbb F_q^{2r}$ and the partial flag $F\in \mathcal F_{n,r}^\jmath$ is defined by
$F_i=\lr{v_k\mid k\in\bigcup_{j\in[1,i]}\bigcup_{l\in[1,N]}I_{j,l}}\;\;(1\leq i\leq N).$
If we define $F^A\in \mathcal F_{n,r}^\jmath$ by setting $F^A_i=\lr{ v_k\ | \  k\in \bigcup_{j\in[1, N]}\bigcup_{l\in[1,i]}I_{j,l}}$, for $1\leq i\leq N$,
 then $\fkm(F,F^A)=A$. In particular, if $\wla=\ro(A)$, $\wmu=\co(B)$, and $d_A\in\sD_{\la,\mu}$, then  $(F^\wla,\dot {d}_AF^\wmu)\in\sO_A$, where $F^\wnu$ denotes the standard flag defined in $\eqref{std flag}$.
 \end{lem}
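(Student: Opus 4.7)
\medskip
\noindent\textbf{Proof proposal.} My plan is to prove the two parts by direct computation using the definition of $\fkm$ in \eqref{m(F,F')} and an explicit description of the refined intersections $F_{i,l}$ in terms of the basis $\{v_k\}$. The key observation throughout is that both flags involved are \emph{monomial} in the chosen basis: each $F_i$ or $F'_l$ is spanned by a subset of $\{v_1,\ldots,v_{2r}\}$, so every intersection and sum appearing in \eqref{m(F,F')} is again monomial, reducing dimension counts to cardinalities of subsets of $[1,2r]$.

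For part (1), I would fix $w$ with $w(k)=i_k$, write $J_l=\{i_1,\ldots,i_l\}$, and note that the monomial description gives
\[
\sfF_j\cap\sfF^w_l=\lr{v_k\mid k\in[1,j]\cap J_l},\qquad \sfF_{j,l}=\sfF_{j-1}+\sfF_j\cap\sfF^w_l=\lr{v_1,\ldots,v_{j-1}}+\lr{v_j\mid j\in J_l}.
\]
Therefore $\dim(\sfF_{j,l}/\sfF_{j,l-1})=1$ if $j\in J_l\setminus J_{l-1}=\{i_l\}$ (equivalently $j=i_l$) and $0$ otherwise. Comparing with \eqref{dot w}, this is precisely the $(j,l)$-entry of $\dot w$, yielding $\fkm(\sfF,\sfF^w)=\dot w$.

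For part (2), I would use exactly the same monomial principle. Writing $U_i=\bigcup_{j\leq i}\bigcup_{l}I_{j,l}$ and $V_l=\bigcup_{j}\bigcup_{l'\leq l}I_{j,l'}$, the definitions of $F$ and $F^A$ give
\[
F_i\cap F^A_l=\Big\langle v_k\;\Big|\;k\in U_i\cap V_l=\bigcup_{j\leq i}\bigcup_{l'\leq l}I_{j,l'}\Big\rangle,
\]
so $F_{i,l}=F_{i-1}+F_i\cap F^A_l$ is spanned by $\{v_k\mid k\in U_{i-1}\}\cup\{v_k\mid k\in\bigcup_{l'\leq l}I_{i,l'}\}$. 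Since the $I_{j,l}$ are pairwise disjoint (by \eqref{Ijl}), $\dim(F_{i,l}/F_{i,l-1})=|I_{i,l}|=a_{i,l}$, proving $\fkm(F,F^A)=A$.

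For the ``in particular'' clause, I would specialise $v_k=e_k$. Then the partial sum property of $\wla$ and the fact that $\bigsqcup_l I_{j,l}=R^\wla_j$ (from \eqref{Ijl}) give $U_i=[1,\tilde\wla_i]$, so $F$ coincides with the standard flag $F^\wla$ of \eqref{std flag}. Dually, $\bigsqcup_j I_{j,l}=d_A R^\wmu_l$, whence $V_l=d_A[1,\tilde\wmu_l]$ and $F^A_l=\lr{e_{d_A(m)}\mid m\in[1,\tilde\wmu_l]}=\dot d_A F^\wmu_l$, giving $F^A=\dot d_A F^\wmu$. Combined with $\fkm(F^\wla,\dot d_A F^\wmu)=A$ and the definition of $\sO_A$ as the $G(q)$-orbit labelled by $A$ (cf.\ \eqref{map p} and the bijection $\fkd$ of \eqref{fkd}), this yields $(F^\wla,\dot d_A F^\wmu)\in\sO_A$.

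The only slightly delicate step is the last identification $V_l=d_A[1,\tilde\wmu_l]$, which relies on the explicit permutation form of $d_A$ in Corollary \ref{urcm}; but once one recognises $\bigsqcup_j I_{j,l}=d_A R^\wmu_l$ from \eqref{Ijl}, the rest is bookkeeping. No real obstacle is expected, since everything reduces to manipulating disjoint unions of index sets and quoting the monomial basis description.
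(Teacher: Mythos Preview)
Your proposal is correct and follows essentially the same monomial-basis approach as the paper: both arguments reduce the computation of $\fkm$ to counting elements in the index sets $I_{j,l}$ (or $\{i_l\}$ in part~(1)) via the identity $F_i\cap F'_l=\lr{v_k\mid k\in U_i\cap V_l}$. The only point the paper treats that you pass over is the verification that $\sfF^w$ (and hence $F^A$, obtained by dropping steps) is actually isotropic; the paper checks this explicitly from the form $\lr{\,,\,}_\sfJ$ before computing $\fkm$, whereas you take membership in $\sB_r$ and $\sF^\jmath_{n,r}$ as given by the hypotheses.
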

\begin{proof} (1)  By the transitivity of the action of $\O_{2r}(q)$ in $\sB_r$,  
there exists a $g\in\O_{2r}(q)$ such that $(\sfF,\sfF')=g.(\sfF^{s},\dot w\sfF^s)$, for some $w\in W$ (see \eqref{BwB}), where $\sfF^{\sf s}$ is the standard flag, i.e., $\sfF^{s}_i=\lr{e_1,\ldots,e_i}$. We may simply  take $(\sfF,\sfF^w)=(\sfF^{s},\dot w\sfF^s)$ and assume $v_i=e_i$ for all $i\in[1,2r]$. 

We first prove that $\sfF^w=(\sfF^{\sf s})^w$ is isotropic.
For any $1\leq m \leq 2r,$ by the definition of $w\in W$ (see \eqref{permB}), we have $i_m+i_{2r+1-m}=2r+1$ which, by \eqref{formJ}, implies that
$e_{i_m}\perp e_{i_p}$ for all $p\in[1,2r]\setminus\{2r-m+1\}$.
Thus, $e_{i_m}\perp e_{i_k}$ for all $m\in[1,j]$ ($j\leq r$) 
and $k\in[1,2r-j]$ since $m+k<2r+1$.
Hence,  ${\sfF^{w}_j}^\perp\supseteq \sfF^{w}_{2r-j}$.
Conversely, 
suppose there is a nonzero element $v=\sum_{m\in[1,2r]}c_me_{i_m}\in {\sfF^{w}_j}^\perp\setminus \sfF^{w}_{2r-j}$. Then there must exist some $k\geq 2r+1-j$ such that $c_k\neq0$. Thus, $e_{i_{2r+1-k}}\in \sfF^{w}_{j}$ and
$$\lr{v,e_{i_{2r+1-k}}}=c_k\lr{e_{i_k},e_{i_{2r+1-k}}}\neq 0$$
 which contradicts to $v\in {\sfF^{w}_{j}}^\perp$. Hence, we must have ${\sfF^{w}_j}^\perp\subseteq \sfF^{w}_{2r-j}$, proving ${\sfF^{w}_j}^\perp=\sfF^{w}_{2r-j}$. So, $\sfF'$ is an isotropic flag.

By the definition of $\fkm(\sfF,\sfF^\prime)=A=(a_{k,l})$, we have $a_{k,l}= \dim\frac{\sfF_{k}\cap \sfF^\prime_l}{\sfF_{k-1}\cap \sfF^\prime_l+\sfF_{k}\cap \sfF^\prime_{l-1}}$. Thus,
\begin{equation}\nonumber\begin{aligned}
   a_{k,l}&=\dim\frac{\lr{v_1,\cdots,v_k}\cap \lr{v_{i_1},\cdots,v_{i_l}}}{\lr{v_1,\cdots,v_{k-1}}\cap \lr{v_{i_1},\cdots,v_{i_l}}+\lr{v_1,\cdots,v_k}\cap \lr{v_{i_1},\cdots,v_{i_{l-1}}}} \\
&=\begin{cases}
    \dim\frac{\lr{v_1,\cdots,v_{k-1}}\cap \lr{v_{i_1},\cdots,v_{i_l}}+\lr{v_1,\cdots,v_k}\cap \lr{v_{i_1},\cdots,v_{i_{l-1}}}}{\lr{v_1,\cdots,v_{k-1}}\cap \lr{v_{i_1},\cdots,v_{i_l}}+\lr{v_1,\cdots,v_k}\cap \lr{v_{i_1},\cdots,v_{i_{l-1}}}},&\text{ if }k\neq i_l,\\
    \\
    \dim\frac{\lr{v_1,\cdots,v_{k-1}}\cap \lr{v_{i_1},\cdots,v_{i_{l-1}}}+\lr{v_k}}{\lr{v_1,\cdots,v_{k-1}}\cap \lr{v_{i_1},\cdots,v_{i_{l-1}}}}, &\text{ if }k=i_l,
\end{cases}\\
&=\delta_{k,i_l},
\end{aligned}\end{equation}which is the same as the matrix $\dot w$.

(2)  
Associated with  $A\in\Xi_{N,2r}$, there is a permutation, a distinguished double coset representative, sending $k$ to $i_k$,
where $(i_1,i_2,\cdots,i_{2r})$ is the permutation of $1,2,\ldots, 2r$ obtained by concatenating $I_{1,1}$ with $I_{2,1}$, down column 1 and then down column 2, and so on. See Lemma \ref{map kap} and \cite[Exer.8.2(1)]{DDPW}. Then, by definition, $F^A_i=\langle v_{i_1}, \cdots, v_{i_{\tilde{\mu}_i}}\rangle$, where $\mu=(\mu_1,\ldots,\mu_N)=\text{co}(A)$. By part (1), $F^A$ is isotropic as it can be obtained by dropping some steps from a corresponding complete isotropic flag.

Now, $F_k\cap F_l^{'}$ is spanned by $\scb_{k,l}:=\{v_k\mid k\in \bigcup_{i\in[1,k],j\in[1,l]} I_{i,j}\}$. Thus,
 $F_{k-1}\cap F_l^{'}+F_k\cap F_{l-1}^{'}$ has basis $\scb_{k-1,l}\cup \scb_{k,l-1}$. Hence,
 $\scb_{k,l}\setminus(\scb_{k-1,l}\cup \scb_{k,l-1})=\{v_k\mid k\in I_{k,l}\}$. Consequently,
 $a_{k,l}=\dim\frac{F_k\cap F_l^{'}}{F_{k-1}\cap F_l^{'}+F_k\cap F_{l-1}^{'}}$. The last assertion follows from Lemma \ref{urcm}. 
\end{proof}
We observe from the proof that the $n(N+1)$-step isotropic flag $F_{\!\centerdot\centerdot}$ associated with $(F,F')$ has the following filtration terms
\begin{equation}\label{explicitFij}
F_{i,j}=F_{i-1}+F_i\cap F'_j=\lr{v_1,v_2,\cdots, v_{\widetilde a^r_{i,j}}}\end{equation}
ordered on the lexicographic ordering of $[1,N]\times[1,N]$.

\begin{example}Let 
$$A=\begin{pmatrix}1&2&3\\ 3&0&{\bf3}\\{\bf3}&{\bf2}&{\bf1}\end{pmatrix}.\quad
$$
Then $n=1$, $N=3$, $r=9$, $\wla=\ro(A)=(6,6,6),\wmu=\co(A)=(7,4,7)$, and $d_A=\big(\begin{smallmatrix}1&2&3&4&5&6&7&8&9&\cdots\\1&7&8&9&13&14&15&2&3&\cdots
\end{smallmatrix}\big).$

Breaking the natural basis $(e_1,e_2\ldots,e_{18})$ into subsequences of lengths 
$a_{1,1},a_{1,2},a_{1,3}, a_{2,1},\ldots,a_{3,3}$ and form
the matrix of the subsequences
$$A(e_1,\ldots,e_{18})=\begin{pmatrix}e_1&(e_2,e_3)&(e_4, e_5,e_6)\\(e_{7},e_{8},e_9)&-&(e_{10},e_{11},e_{12})\\
(e_{13},e_{14},e_{15})&(e_{16},e_{17})&e_{18}\\\end{pmatrix}$$
Then $F=F^\wla$ with $F^\wla_i$ being spanned by all rows $j$, $1\leq j\leq i$, and $F'=d_AF^\wmu$ with $F'_i$ being spanned by all columns $j$, $1\leq j\leq i$: 
$$\aligned
F_1&=\lr{e_1,\ldots,e_6}, \;\;F_2=F_1+\lr{e_7,\ldots,e_{12}},\;\;F_3=F_2+\lr{e_{13},\ldots,e_{18}},\\  
F_1'&=\lr{e_1,e_7,e_8,e_9,e_{13},e_{14},e_{15}}, \;\;F'_2=F_1'+\lr{e_2,e_3,e_{16},e_{17}},\;\;F'_3=F'_2+\lr{e_4,e_5,e_6,e_{10},e_{11},e_{12},e_{18}},\endaligned$$
and $F_{\!\centerdot\centerdot}$ has the form
 $$F_{i,j}=F^\wla_{i-1}+\text{subspaces spanned by subsequences of lengths $a_{i,1},\ldots,a_{i,j}$}=\lr{e_1,e_2,\ldots,e_{\widetilde a^r_{i,j}}}.$$
\end{example}

For vector spaces $U,V$,  $n$-step isotropic flags $F,F^\prime \in\mathcal F_{n,r}^\jmath$, and $h\in[1,n]$, we set
\begin{equation}\label{UsubsetV}
\aligned
U\overset a\subseteq V\;\text{ or }\;V\overset a\supseteq U
&\iff U\subseteq V,\;\text{ and }\;\dim V/U=a, \\
F\overset 1\subset_hF'\;(\text{resp.}, F\overset 1\supset_hF')&\iff F_h\stackrel{1}{\subset }{F'}_{h}\;(\text{resp.},F_h\stackrel{1}{\supset }{F'_h}),\; F_i=F'_i,\;\; \forall i\in[1,n]-\{ h\}.
\endaligned
\end{equation}

We set $E_{i j}^{\theta}=E_{i j}+E_{N+1-i, N+1-j},$ where $E_{i j}$ is the $N\times N$ matrix whose $(i,j)$-entry is $1$ and all other entries are 0. 

\begin{cor}\label{Cormulti}Let $E,F,F'\in\mathcal F_{n,r}^\jmath$ and $h\in[1,n]$. 
We have 
$$F\overset 1\subset_hE\iff \fkm(F, E)-E^\th_{h+1,h}=\diag(\wga),\text{ for some }\ga\in\La(n+1,r).$$
 Moreover, if $F\overset 1\subset_hE$, then  $\fkm(E, F')=A\implies\fkm(F, {F'})=A-E_{h,p}^\theta+E^{\theta}_{h+1,p}$, for some $p\in[1,N]$. Conversely, if $\fkm(F, {F'})=A-E_{h,p}^\theta+E^{\theta}_{h+1,p}=A'$, then there exists $E\in\sF^\jmath_{n,r}$ such that $F\overset 1\subset_hE$ and $\fkm(E,F')=A'+E^\th_{h,p}-E^\th_{h+1,p}=A$.

Here $p$ satisfies the conditions  $F_{h}\cap F_j'=E_h\cap F_j'$ for $j<p$ and $F_h\cap F_j'\neq E_h\cap F_j^{'}$ for $j\geq p$.
\end{cor}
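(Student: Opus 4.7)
The plan is to treat the three claims in order, bridging matrices and flag pairs via Lemma \ref{MforB}(2). For the equivalence, first assume $F\overset 1\subset_h E$. The isotropy $F_{N-i} = F_i^\perp$ propagates the codim-1 relation at step $h$ to the dual identities $F_i = E_i$ for $i \in [n+1, N-1]\setminus\{N-h\}$ and $E_{N-h} \overset 1\subset F_{N-h}$. A row-by-row computation of $a_{i,j} = \dim(F_{i-1}+F_i\cap E_j)/(F_{i-1}+F_i\cap E_{j-1})$ then shows $a_{i,j} = \delta_{i,j}\dim F_i/F_{i-1}$ for $i \notin\{h+1, N-h\}$, while row $h+1$ has the single off-diagonal entry $a_{h+1, h} = 1$ with diagonal $\dim F_{h+1}/F_h - 1$, and row $N-h$ is its $\theta$-mirror; the whole matrix equals $\diag(\wga) + E^\th_{h+1, h}$ for $\wga$ the dimension sequence of $F$. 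Conversely, given a matrix of that shape, Lemma \ref{MforB}(2) produces an isotropic flag pair realising it, and the prescribed profile of zeros and ones forces $F$ and $E$ to differ by a codim-1 step only at index $h$.

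For the forward direction of the second assertion, assume $F\overset 1\subset_h E$ and $\fkm(E, F') = A$. The key quantity is $d_j := \dim(E_h\cap F'_j) - \dim(F_h\cap F'_j) \in \{0, 1\}$, which is weakly increasing with $d_0 = 0$ and $d_N = 1$ and so jumps from $0$ to $1$ at a unique $p \in [1, N]$; this is the $p$ in the statement. A short dimension calculation using $F_{h-1} \subseteq F_h \subsetneq E_h \subseteq F_{h+1}$ shows that the $(h, j)$-entry of $\fkm(E, F')$ exceeds that of $\fkm(F, F')$ by $d_j - d_{j-1}$, and the $(h+1, j)$-entry of $\fkm(F, F')$ exceeds that of $\fkm(E, F')$ by the same amount; both differences vanish except at $j = p$, where they equal $+1$. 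Rows $i \notin \{h, h+1, N-h, N-h+1\}$ are unchanged because $F_i = E_i$ and $F_{i-1} = E_{i-1}$, while the $\theta$-symmetric changes at rows $N-h, N-h+1$ and column $N+1-p$ are forced by the $\theta$-invariance of matrices in $\Xi_{N, 2r}$. Summing up yields $\fkm(F, F') = A - E^\th_{h, p} + E^\th_{h+1, p}$; the edge case $h = n$ collapses the two pairs of rows into the single middle row $n+1$ with the same net identity.

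For the converse, I would construct $E$ explicitly. Since the $(h+1, p)$-entry of $A' = \fkm(F, F')$ is positive, $F_{h+1}\cap F'_p$ strictly contains $F_h + F_{h+1}\cap F'_{p-1}$, so there is a vector $v$ in the former but not the latter. Set $E_h := F_h + \lr v$, $E_i := F_i$ for $i \in [1, n]\setminus\{h\}$, and extend via $E_{N-i} := E_i^\perp$. When $h < n$, $v$ lies in the isotropic subspace $F_{h+1}$ and $E_h$ is automatically isotropic; when $h = n$, one exploits odd characteristic and the induced nondegenerate form on $F_n^\perp/F_n$ to adjust $v$ within its coset so that $\langle v, v\rangle_\sfJ = 0$. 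With this $E$, the first two parts of the corollary applied in reverse give $F\overset 1\subset_h E$ and $\fkm(E, F') = A$. The main obstacle is precisely this isotropy adjustment when $h = n$, which is where the ambient quadratic geometry enters essentially; the other steps are mechanical dimension bookkeeping.
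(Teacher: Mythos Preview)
Your proposal is essentially correct and proceeds along a more coordinate-free route than the paper. The paper fixes a basis $\{v_1,\ldots,v_{2r}\}$ realising $(F,F')$ in standard form via Lemma~\ref{MforB}(2), then tracks the changes in the basis-subset bookkeeping $\scb_{k,l},\scb'_{k,l}$ explicitly; for the converse it chooses $E_h=F_h+\lr{v_m}$ with $m$ an actual index in $I'_{h+1,p}$, so isotropy of $E$ is automatic from the pairing $\lr{v_m,v_{2r+1-m}}_\sfJ\neq 0$. Your ``jump'' function $d_j=\dim(E_h\cap F'_j)-\dim(F_h\cap F'_j)$ is a cleaner packaging of the same calculation and avoids choosing coordinates; it is correct, including the $h=n$ collapse, once one notes that $e_j:=\dim(F_{n+1}\cap F'_j)-\dim(E_{n+1}\cap F'_j)=1-d_{N-j}$ via $F_{n+1}=F_n^\perp$, so row $n+1$ picks up $\delta_{j,p}+\delta_{j,N+1-p}$ as required.

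Two points to tighten. First, in the converse you write ``$F_{h+1}\cap F'_p$ strictly contains $F_h+F_{h+1}\cap F'_{p-1}$''; this containment need not hold since $F_h\not\subseteq F'_p$ in general. What you need (and subsequently use) is only $F_{h+1}\cap F'_p\not\subseteq F_h+F_{h+1}\cap F'_{p-1}$, which is exactly $a'_{h+1,p}>0$. Second, and more substantively, the isotropy adjustment at $h=n$ is underspecified. For $p\leq n$ it is automatic; for $p\geq n+2$ one can perturb by $w\in F_{n+1}\cap F'_{N+1-p}$ (isotropic, since $N+1-p\leq n$) and solve $\lr{v,v}_\sfJ+2c\lr{v,w}_\sfJ=0$, where such $w$ with $\lr{v,w}_\sfJ\neq0$ exists precisely because $v\notin F_n+F_{n+1}\cap F'_{p-1}=(F_{n+1}\cap F'_{N+1-p})^\perp\cap F_{n+1}$. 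But for $p=n+1$ this fails (any such $w$ lies in $v^\perp$), and one must instead invoke that the induced form on $(F_{n+1}\cap F'_{n+1})/(F_n+F_{n+1}\cap F'_n)$ is nondegenerate of \emph{split} type $\sfJ_{2r''}$ with $2r''=a'_{n+1,n+1}\geq 2$, hence has isotropic vectors. This is \cite[Lem.~3.1.1]{FL}, used by the paper in the proof of Theorem~\ref{sjcon}; odd characteristic alone does not suffice. The paper's basis-index construction avoids this issue entirely, which is the main thing its approach buys.
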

\begin{proof} The first assertion follows from Lemma~\ref{MforB}(2) immediately.

For the second assertion, let $A'=\fkm(F, {F'})$.  We want to prove that $A'=A-E_{h,p}^\theta+E^{\theta}_{h+1,p}$. By the condition $F\overset 1\subset_hE$, we may assume that there exists a basis $\sB=\{v_1,\ldots,v_{2r}\}$ for $\mathbb F_q^{2r}$ such that the isotropic subspaces have the form $F_j=E_j=\lr{v_{1}, \cdots, v_{{{\tla}_j}}}$, for all $j\in[1,n],j\neq h$, $F_h=\lr{v_{1}, \cdots, v_{{{\tla}_i}}}$, and $E_h=\lr{v_{1}, \cdots, v_{{{\tla}_i}},v_{{{\tla}_i}+1}}$, where $\la={\bf dim} F$ (see \eqref{dimvec}). 

Suppose $F'=E^A$. Then $a_{kl}=\dim\frac{E_k\cap F_l'}{E_{k-1}\cap F_l'+E_k\cap F_{l-1}'}$ for all $k,l$. We compute
$a_{kl}'=\dim\frac{F_k\cap F_l'}{F_{k-1}\cap F_l'+F_k\cap F_{l-1}'}$. Clearly, we have
\begin{equation}\label{xxx}a_{kl}=a'_{kl}\;\;\text{ for all $l$ and $k\neq h,h+1,N-h,N+1-h$}.
\end{equation}

With the notation in the proof of Lemma \ref{MforB}, we have bases $\scb_{k,l}\subseteq\sB$ for $E_k\cap F'_l$ and $\scb'_{k,l}\subseteq\sB$ for $F_k\cap F'_l$. Then 
\begin{equation}\label{yyy}
\aligned a'_{h,j}&=|\scb'_{h,j}|-|\scb'_{h,j-1}|-|\scb'_{h-1,j}|+|\scb'_{h-1,j-1}|\\
&=|\scb_{h,j}|-|\scb_{h,j-1}|-|\scb_{h-1,j}|+|\scb_{h-1,j-1}|+|\scb'_{h,j}|-|\scb'_{h,j-1}|-|\scb_{h,j}|+|\scb_{h,j-1}|\\
&=a_{h,j}+(|\scb'_{h,j}|-|\scb_{h,j}|)-(|\scb'_{h,j-1}|-|\scb_{h,j-1}|)\;\;\;\text{and similarly,}\\
a'_{h+1,j}&=a_{h+1,j}-(|\scb'_{h,j}|-|\scb_{h,j}|)+(|\scb'_{h,j-1}|-|\scb_{h,j-1}|).
\endaligned
\end{equation}

By the hypothesis $F\overset 1\subset_hE$,
there exists $p\in[1,N]$ such that $F_h\cap F_j'=E_h\cap F_j'$ when $j<p$ and $F_h\cap F_j'\neq E_h\cap F_j^{'}$ when $j\geq p$. Then 
\begin{equation}\label{zzz}
|\scb_{h,j}|-|\scb'_{h,j}|=\begin{cases}0,&\text{ if } j<p;\\
1,&\text{ if } j\geq p.\end{cases}
\end{equation}
(Replacing 1 by $-1$ gives the relation under the condition $F\overset 1\supset_hE$.) Substituting \eqref{zzz} into \eqref{yyy} together with \eqref{xxx} gives $A'=A-E_{h,p}^\theta+E^{\theta}_{h+1,p}$.

Conversely, let $F$ be given as above and suppose $F'=F^{A'}$ with $A'=\apu$. We want to construct an $E\in\sF^\jmath_{n,r}$ such that $F\overset 1\subset_hE$ and $\fkm(E,F')=A'+E^\th_{h,p}-E^\th_{h+1,p}=A$. Let $I'_{j,l}=I_{j,l}(A')$
  as in \eqref{Ijl}. Then $I'_{n+1,p}\neq\emptyset$ as $a'_{h+1,p}>0$. If we choose any $m\in 
I'_{h+1,p}$, then necessarily, by symmetry, ${2r+1-m}\in I'_{N-h,N+1-p}$.  Define 
$$\aligned
I_{h,p}&=I'_{h,p}\cup\{m\},\qquad I_{N+1-h,N+1-p}=I'_{N+1-h,N+1-p}\cup\{{2r+1-m}\};\\
I_{h+1,p}&=I'_{h+1,p}\setminus\{m\},\quad I_{N-h,N+1-p}=I'_{N-h,N+1-p}\setminus\{{2r+1-m}\}\;\;(I_{j,l}=I'_{j,l}, \text{ for other }j,l),
\endaligned
$$
and set $E_i=\lr{v_j\mid j\in\cup_{k=1}^nI_{i,k}}$. Then $E_h=F_h+\lr{v_m}$ and $E_i=F_i$ for other $i$. One checks easily that $E$ is isotropic using the fact $\lr{v_m,v_{2r+1-m}}_\sfJ\neq0$. Finally, by an argument around \eqref{xxx}, \eqref{yyy}  and \eqref{zzz}, one checks easily that $\fkm(E,F')=A$.
\end{proof}

Let $(\mathcal F_{n,r}^\jmath\times \mathcal F_{n,r}^\jmath)/G(q)$ 
 denote the set of $G(q)$-orbits in $\mathcal F_{n,r}^\jmath\times \mathcal F_{n,r}^\jmath$. 
Then, we obtain a map
\begin{equation}\label{fkm}\fkm: (\mathcal F_{n,r}^\jmath\times \mathcal F_{n,r}^\jmath)/{{\mathrm{O}}_{2r}(q)}\longrightarrow\Xi_{N,2r},\;\;G(q).(F,F')\longmapsto
\fkm(F,F').
\end{equation} 
The ``equal parameter'' counterpart of the following result can be found in \cite[Lem.~2.1]{BKLW}.
\begin{cor} \label{bij fkm}
The map $\fkm$ is bijective.
\end{cor}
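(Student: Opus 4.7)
The plan is to establish bijectivity of $\fkm$ by combining the explicit constructions of the preceding lemma with the Bruhat decomposition of $\O_{2r}(q)$. Well-definedness of $\fkm$ on $G$-orbits is automatic, since the defining entries $a_{ij}=\dim(F_{i,j}/F_{i,j-1})$ depend only on dimensions of intersections, which $G$ preserves. For surjectivity, I would invoke Lemma \ref{MforB}(2) directly: given any $A\in\Xi_{N,2r}$ with $\wla=\ro(A)$ and $\wmu=\co(A)$, that lemma produces the explicit pair $(F^\wla,\dot d_A F^\wmu)$ realizing $\fkm=A$.

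Injectivity is the substantive part. Suppose $\fkm(F,F')=A=\fkm(E,E')$ with $\wla=\ro(A)$, $\wmu=\co(A)$. Because $\bdim(F)=\wla=\bdim(E)$ and $\O_{2r}(q)$ acts transitively on each dimension stratum $\sX_\wla\subseteq\sF^\jmath_{n,r}$ (see \eqref{dimvec}), I can act by a single group element to reduce to the case $F=E=F^\wla$. The remaining claim becomes that $F'$ and $E'$ lie in a single $P_\la(q)$-orbit, where $P_\la(q)=\Stab_{G}(F^\wla)$; equivalently, $P_\la(q)$-orbits on the $\wmu$-stratum $\sX_\wmu$ are classified by the matrices $A'\in\Xi_{N,2r}$ with row sum $\wla$ and column sum $\wmu$, via $F'\mapsto\fkm(F^\wla,F')$.

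To prove this classification, I would compose three bijections. First, since $\O_{2r}(q)$ has a split BN-pair and $P_\la(q),P_\mu(q)$ are the standard parabolics associated with $W_\la,W_\mu$, the Bruhat decomposition gives $P_\la(q)\backslash\O_{2r}(q)/P_\mu(q)\cong W_\la\backslash W/W_\mu$, with representatives $\sD_{\la,\mu}$. Second, Lemma \ref{map kap}(3) bijects $\sD_{\la,\mu}$ with those matrices in $\Xi_{N,2r}$ having row sum $\wla$ and column sum $\wmu$, through $d\mapsto\fkd(\la,d,\mu)$. Third, Lemma \ref{MforB}(2) identifies this combinatorial matrix with the geometric invariant by asserting $\fkm(F^\wla,\dot d_A F^\wmu)=A$. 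Composing the three bijections yields the classification, and injectivity follows.

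The principal obstacle is cosmetic: confirming that the three identifications compose to $\fkm$ rather than to some twisted variant. This matching is precisely the content of Lemma \ref{MforB}(2), so once it is invoked the argument is a matter of threading the indexing conventions through. No extra geometric input about $\O_{2r}(q)$ beyond the split BN-pair structure already recorded in the text is needed.
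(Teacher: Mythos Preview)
Your proof is correct and follows essentially the same route as the paper's: well-definedness is automatic, surjectivity comes from Lemma~\ref{MforB}(2), and injectivity is obtained by matching $G$-orbits in $\sX\times\sX$ with double cosets $W_\la\backslash W/W_\mu$ (via the split BN-pair/Bruhat picture, which is the content behind the paper's appeal to Lemma~\ref{scott} and the parabolic identification) and then with $\Xi_{N,2r}$ via the bijection~$\fkd$ of Lemma~\ref{map kap}(3). The paper states this more tersely and leaves the compatibility check implicit, whereas you spell out explicitly that Lemma~\ref{MforB}(2) certifies the composite bijection agrees with $\fkm$; this extra care is a mild improvement but not a genuinely different argument.
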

\begin{proof}The following proof modifies the proof of \cite[Lem.~1.2]{BKLW}. First, the proof
for a well-defined $\fkm$ uses the argument there. The surjectivity follows from part (2) of the lemma above. 
Finally, the injectivity of $\fkm$ follows from Lemma \ref{scott}(2) (with $X=\sX$) and the bijection \eqref{fkd}, noting the relation
between $n$-step isotropic flags and parabolic subgroups introduced in the next subsection.
\end{proof}

For an orbit $(F,F')\in\mathcal F_{n,r}^\jmath\times \mathcal F_{n,r}^\jmath$, if $\fkm(F,F')=A$, we write $\sO_A:=G.(F,F')$. Similarly, for $(\sfF,\sfF')\in\sB_r\times\sB_r$,
if $\fkm(F,F')=\dot w$, we set $\sO_w=G(q).(F,F')$.

For any $A=(a_{i,j})\in\Xi_{N,2r}$, $h\in[1,n]$ and $p\in[1,N]$, define
\begin{equation}\label{hp}
\apl =A+E_{h,p}^\theta-E^{\theta}_{h+1,p}\;\;\text{ and }\;\;\apu=A-E_{h,p}^\theta+E^{\theta}_{h+1,p}.
\end{equation}Thus, $\apl,\apu\in\Xi_{N,2r}$ imply $a_{h+1,p}\geq1$ and $a_{h,p}\geq1$, respectively.
For later use, we record the following result which follows from Corollary \ref{Cormulti}.

\begin{prop}\label{new} 
Let $h\in[1,n]$, $p\in[1,N]$, $A,B,C\in\Xi_{N,2r}$ with $B-E^\th_{h,h+1}$ and $C-E^\th_{h+1,h}$ diagonal. 
Let $(F,F')\in\sO_{A'}$ with $A'={\apl}$ (resp., $\apu$) and $\al=\ro(A')$. Suppose $\{v_1,v_2,\ldots,v_{2r}\}$ is a basis for $\mathbb F_q^{2r}$ such that $F_i=\lr{v_1,v_2,\ldots,v_{\widetilde\al_i}}$ for all $i\in[1,N]$ and $F'=F^{A'}$. Then the flag $E\in\sF^\jmath_{n,r}$, obtained from $F$ by moving a vector $v_m$ from $\{v_k\mid k\in I'_{h,p}\;(\text{resp., }I'_{h+1,p})\}$ to $\{v_k\mid k\in I'_{h+1,p}\;(\text{resp., }I'_{h,p})\}$ and moving $v_{2r+1-m}$ from $\{v_k\mid k\in I'_{N+1-h,N+1-p}\;(\text{resp., }I'_{N-h,N+1-p})\}$ to $\{v_k\mid k\in I'_{N-h,N+1-p}\;(\text{resp., }I'_{N+1-h,N+1-p})\}$, satisfies $\fkm(F,E)=B$ (resp., $C$) and $\fkm(E,F')=A$.
\end{prop}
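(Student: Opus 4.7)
The plan is to identify Proposition~\ref{new} as the explicit, two-sided reformulation of Corollary~\ref{Cormulti}'s converse direction, and to reduce it, via parallel modifications of the flag $F$, to that corollary.

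The $A'=\apu$ case is immediate: the construction described in the statement (move $v_m$ from $I'_{h+1,p}$ into $I'_{h,p}$, and $v_{2r+1-m}$ from $I'_{N-h,N+1-p}$ into $I'_{N+1-h,N+1-p}$) is verbatim the one carried out in the proof of Corollary~\ref{Cormulti}'s converse direction, which already yields $E\in\sF^\jmath_{n,r}$ with $F\overset 1\subset_h E$ and $\fkm(E,F')=A$. The forward direction of Corollary~\ref{Cormulti} applied to $F\overset 1\subset_h E$ then forces $\fkm(F,E)-E^\theta_{h+1,h}$ to be diagonal, which is precisely the condition defining $C$.

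For the $A'=\apl$ case we perform the mirror construction. Since $a'_{h,p}=a_{h,p}+1\geq 1$, pick $m\in I'_{h,p}$; by the $\theta$-symmetry of $A'$, one then has $2r+1-m\in I'_{N+1-h,N+1-p}$. Moving $v_m$ from $I'_{h,p}$ to $I'_{h+1,p}$ and $v_{2r+1-m}$ from $I'_{N+1-h,N+1-p}$ to $I'_{N-h,N+1-p}$ produces a flag $E$ with $E_i=F_i$ for $i\neq h,N-h$, with $E_h\oplus\lr{v_m}=F_h$ a codimension-one subspace of $F_h$, and with $E_{N-h}=F_{N-h}+\lr{v_{2r+1-m}}$. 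The bilinear-form duality $\lr{v_i,v_j}_{\sfJ}=\delta_{j,2r+1-i}$---built into a basis refining $(F,F')$ simultaneously into isotropic complete flags as in Lemma~\ref{MforB}---yields $v_{2r+1-m}\in E_h^\perp$, and the dimension equality $\dim E_h^\perp=\dim F_{N-h}+1=\dim E_{N-h}$ upgrades this to $E_{N-h}=E_h^\perp$; hence $E\in\sF^\jmath_{n,r}$ and $E\overset 1\subset_h F$. Applying Corollary~\ref{Cormulti} to the pair $(E,F)$ and transposing via the Zassenhaus-lemma identity $\fkm(F,E)_{i,j}=\fkm(E,F)_{j,i}$ then yields $\fkm(F,E)-E^\theta_{h,h+1}$ diagonal, i.e., $\fkm(F,E)=B$.

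The equality $\fkm(E,F')=A$ in the $\apl$ case follows from the forward direction of Corollary~\ref{Cormulti} applied to the triple $(E,F,F')$: from $E\overset 1\subset_h F$ and $\fkm(F,F')=A'$ one obtains $\fkm(E,F')=A'-E^\theta_{h,p'}+E^\theta_{h+1,p'}$ for some $p'$, characterized as the first $j$ at which $E_h\cap F'_j$ differs from $F_h\cap F'_j$. Since $v_m\in F_h\setminus E_h$ and $m\in I'_{h,p}$ place $v_m\in F'_p\setminus F'_{p-1}$, we get $p'=p$, so $\fkm(E,F')=A'-E^\theta_{h,p}+E^\theta_{h+1,p}=A$. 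The only technical point meriting care is the basis-duality condition that guarantees $E$ is isotropic; it is routine once the basis $\{v_i\}$ is chosen adapted to both flags $F$ and $F'$.
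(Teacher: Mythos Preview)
Your proposal is correct and follows the same approach as the paper, which gives no separate proof but simply records the result as a consequence of Corollary~\ref{Cormulti}. You have faithfully unpacked that deduction: the $\apu$ case is literally the converse construction in Corollary~\ref{Cormulti}, and for the $\apl$ case you carry out the mirror move (removing $v_m$ from $F_h$ rather than adding it), then invoke the forward direction of the same corollary with $E$ and $F$ interchanged and the transpose identity $\fkm(F,E)=\fkm(E,F)^t$ to identify $\fkm(F,E)$ with $B$; the determination $p'=p$ via $v_m\in F'_p\setminus F'_{p-1}$ is exactly the criterion recorded at the end of Corollary~\ref{Cormulti}. Your closing caveat about the basis-duality condition $\lr{v_i,v_j}_\sfJ=\delta_{j,2r+1-i}$ is apt and matches the paper's own implicit use of it (``One checks easily that $E$ is isotropic using the fact $\lr{v_m,v_{2r+1-m}}_\sfJ\neq0$'' in the proof of Corollary~\ref{Cormulti}); this is guaranteed once one passes, via the transitive $G(q)$-action on $\sO_{A'}$, to the standard representative $(F^{\widehat\lambda},\dot d_{A'}F^{\widehat\mu})$ with $v_i=e_i$.
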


\subsection{A geometric setting for {\rm$H^\scb_{\bsq,1}$}}

Let $G(q)={\mathrm{O}}_{2r}(q)$ and 
let $\fkF_{G(q)}(\mathcal B_{r}\times \mathcal B_{r})=\fkF_{G(q)}(\mathcal B_{r}\times \mathcal B_{r},\mathbb Z)$. Let $\tau_w$ be the orbital function associated with orbit $\sO_w$; see \eqref{orbifun}. Then,
for any $1\leq j\leq r$ and $\sfF,\sfF'\in\mathcal B_{r}$,  $\tau_j=\tau_{s_j}\in \fkF_{G(q)}(\mathcal B_{r}\times \mathcal B_{r})$ has the form
$$\tau_j(\sfF,\sfF^{'})=\begin{cases}
    1, &\text{ if $\sfF,\sfF'$ differ only at $j$-th (and $(2r-j)$-th) subspace};\\
    0, &\text{ otherwise.}
\end{cases}
$$
In other words, if two isotropic complete flags $\sfF$ and $\sfF'$ differ only at the $j$-th subspace (i.e, $\sfF_i=\sfF'_i$, for all $i\in[1,r]$,  $i\neq j$, and  $\sfF_j\neq \sfF'_j$), then the orbit $\sO_j=G(q).(\sfF,\sfF')$ has image $\fkp(\sfF,\sfF')=\dot s_j$, for all $j\in[1,r]$ (i.e., $\sO_j=\sO_{s_j}$). (Note that each $\sfF$ is an $r$-step isotropic filtration extended to a $2r$-step filtration.) 

We also write ${\bf 1}=\tau_w$, for $w=1$. This is the function associated with the orbit $\sO_1:=G(q).(\sfF,\sfF)$ for the identity $1\in W$.

Recall the Hecke algebra $H_{\bsq,1}=H_{\bsq,1}^\scb$ of type $B_r$ from \S2.
Let $H_{q,1}:=H_{\bsq,1}|_{\bsq=q}:=H_{\bsq,1}\otimes_\sA\mathbb Z$ be the $\mathbb Z$-algebra obtained by specializing $\bsq$ to the prime power $q$. Then 
$$H_{q,1}\cong H_{\bsq,1}/(\bsq-q)H_{\bsq,1}.$$
Note that $H_{q,1}$ can be presented by \eqref{rels} with $\bsq$ replaced by $q$. The following result can be proved directly, using convolution product, or via Lemma \ref{scott} (see \cite{Iw}).

\begin{prop} 
Let $G(q)={\mathrm{O}}_{2r}(q)$. There is a $\mathbb Z$-algebra isomorphism
$$H_{q,1}\cong\fkF_{G(q)}(\mathcal B_{r}\times \mathcal B_{r})$$
defined by sending $T_j$ to $\tau_j$, for all $j\in[1,r]$.
\end{prop}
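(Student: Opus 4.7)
The plan is to combine Lemma \ref{scott} with the convolution computation of Iwahori type for the split $BN$-pair on $G(q)=\O_{2r}(q)$. By \eqref{map p}, or equivalently by Lemma \ref{scott}(1) applied to the two-sided $B(q)$-action on $G(q)$ together with the Bruhat decomposition, the $G(q)$-orbits in $\sB_r\times\sB_r$ are parametrized by $W=W^\scb$, so $\{\tau_w\mid w\in W\}$ is a $\mathbb Z$-basis of $\fkF_{G(q)}(\sB_r\times\sB_r)$ matching the $T_w$-basis of $H_{q,1}$. It therefore suffices to verify that the elements $\tau_1,\dots,\tau_r$ satisfy the defining relations \eqref{Hq1 rels} and generate, since both sides will then have the same $\mathbb Z$-rank.

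I would verify the commutation and braid relations (items (1), (2), and (4) of \eqref{rels}) by the standard reduction to the rank-$2$ parabolic subgroups of the split $BN$-pair, as in Iwahori's original argument; these are formal consequences of the Bruhat order and of the identity $\tau_w\ast\tau_{w'}=\tau_{ww'}$ whenever $\ell(ww')=\ell(w)+\ell(w')$. The substantive content is the quadratic relations, handled via \eqref{f_Of_O'}. For $1\le j<r$, the flags $\sfF'$ contributing to $(\tau_j\ast\tau_j)(\sfF,\sfF'')$ are precisely those differing from $\sfF$ only in position $j$, and such $\sfF'$ correspond to lines $\sfF'_j/\sfF_{j-1}$ in the two-dimensional totally isotropic quotient $\sfF_{j+1}/\sfF_{j-1}$; there are $q+1$ such lines. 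Comparison with $\sfF''$ then gives $(\tau_j\ast\tau_j)(\sfF,\sfF)=q$ and $(\tau_j\ast\tau_j)(\sfF,\sfF'')=q-1$ when $(\sfF,\sfF'')\in\sO_{s_j}$, yielding $\tau_j^2=(q-1)\tau_j+q\,{\bf 1}$.

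The main obstacle, and also what makes the geometry produce the unequal parameter $(\bsq,1)$, is the computation for $j=r$. Here the intermediate flags $\sfF'$ differ from $\sfF$ only at position $r$, so $\sfF'_r/\sfF_{r-1}$ varies over the isotropic lines of the two-dimensional nondegenerate quadratic space $\sfF_{r-1}^\perp/\sfF_{r-1}$. Since $\sfJ$ is the split symmetric form, $\O_{2r}(q)=\O^+_{2r}(q)$ and the induced form on this quotient is again of plus type, i.e.\ a hyperbolic plane, which contains exactly $2$ isotropic lines. Hence there is a unique $\sfF'\ne\sfF$ with $(\sfF,\sfF')\in\sO_{s_r}$ and no third alternative, giving $(\tau_r\ast\tau_r)(\sfF,\sfF)=1$ and $(\tau_r\ast\tau_r)(\sfF,\sfF'')=0$ for $(\sfF,\sfF'')\in\sO_{s_r}$, i.e.\ $\tau_r^2={\bf 1}$, which is precisely relation $(5')$ of \eqref{Hq1 rels}. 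This is exactly the point where the second Hecke parameter $\bsq'$ collapses to $1$, in contrast to the odd-orthogonal or symplectic situations where the analogous quotient would contribute $q^2$.

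Once the defining relations are verified, the universal property of $H_{q,1}$ produces a $\mathbb Z$-algebra map $\phi\colon H_{q,1}\to\fkF_{G(q)}(\sB_r\times\sB_r)$ with $\phi(T_j)=\tau_j$. A routine induction on $\ell(w)$, using $\tau_w\ast\tau_{s_j}=\tau_{ws_j}$ whenever $\ell(ws_j)=\ell(w)+1$, then shows $\phi(T_w)=\tau_w$ for all $w\in W$, so $\phi$ carries basis to basis and is therefore a $\mathbb Z$-algebra isomorphism.
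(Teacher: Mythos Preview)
Your proposal is correct and follows essentially the same approach as the paper. The paper, like you, defers the braid and commutation relations to the standard Iwahori argument (via Lemma \ref{scott}) and focuses only on verifying the degenerate relation $\tau_r^2={\bf 1}$; its computation is phrased in coordinates (pick isotropic $u,v$ spanning $\sfF_{r-1}^\perp/\sfF_{r-1}$ with $\langle u,v\rangle_\sfJ=1$, then $\langle \alpha u+\beta v,\alpha u+\beta v\rangle=2\alpha\beta$ forces $\alpha\beta=0$) whereas you phrase it conceptually as the hyperbolic plane having exactly two isotropic lines, but the content is identical.
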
 

\begin{proof}
We only verify the degenerate relation $\tau_r^2=1$. Observe that,
for $\sfF,\sfF^\prime\in \mathcal B_{r}$,
$$\tau_r^2(\sfF,\sfF^\prime)=\sum_{{\sfF''}\in\mathcal B_{r}}\tau_r(\sfF,{\sfF''})\tau_r({\sfF''},\sfF^\prime)\neq0$$
implies $\exists \sfF''$ s.t. $(\sfF,\sfF'')\in\sO_{r}$ and $(\sfF'', \sfF')\in\sO_{r}$. Thus, $\sfF_r\neq \sfF''_r\neq \sfF'_r$ are all maximal isotropic. Suppose $\sfF_r=\sfF_{r-1}+\lr{u}$, $\sfF''_r=\sfF_{r-1}+\lr{v}$ and $\sfF'_r=\sfF_{r-1}+\lr{\al u+\be v}$. Then $\lr{u,u}_\sfJ=0=\lr{v,v}_\sfJ$ and $\lr{u,v}_\sfJ=1$. So $0=\lr{\al u+\be v,\al u+\be v}=2\al\be$, forcing $\be=0$. Hence, $\sfF=\sfF'$ and in this case $\tau_r^2(F,F)=1$.
Consequently, $\tau_r^2={\bf 1}.$
\end{proof}


\subsection{A geometric setting for {\rm $S^\scb_{\bsq,1}(n,r)$}}

Let $S^\scb_{q,1}(n,r):=S^\scb_{\bsq,1}(n,r)|_{\bsq=q}=S^\scb_{\bsq,1}(n,r)\otimes_\sA\mathbb Z$ be the $\mathbb Z$-algebra obtained by specializing $\bsq$ to the prime power $q$. Then 
$$S^\scb_{q,1}(n,r):=S^\scb_{\bsq,1}(n,r)|_{\bsq=q}\cong S^\scb_{\bsq,1}(n,r)/(\bsq-q)S_{\bsq,1}(n,r).$$
Let $G(q)=\O_{2r}(q)$, $\sX=\mathcal F_{n,r}^\jmath$ and $\fkF_{G(q)}(\sX\times \sX)=\fkF_{G(q)}(\sX\times \sX,\mathbb Z)$. Recall the natural basis $\{e_A|_{\bsq=q}\mid A\in\Xi_{N,2r}\}$ for $S^\scb_{q,1}(n,r)$ and the orbital basis $\{f_\sO\}$ for $\fkF_{G(q)}(\sX\times \sX)$

\begin{thm} \label{geosettingB}
There is a $\mathbb Z$-algebra isomorphism
$$S^\scb_{q,1}(n,r)\cong\fkF_{G(q)}(\sX\times \sX),$$
sending $e_A|_{\bsq=q}$ to $f_{\sO_A}$ for all $A\in\Xi_{N,2r}$.
\end{thm}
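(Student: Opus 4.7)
The plan is to deduce the isomorphism by combining Lemma \ref{scott}(2) with a standard Iwahori-type identification of the $\bsq$-permutation modules, and then to match bases explicitly. First I would partition $\sX=\mathcal F_{n,r}^\jmath$ into $G(q)$-orbits via the dimension sequence map \eqref{dimvec}, obtaining $\sX=\bigsqcup_{\la\in\La(n+1,r)} G(q).F^\la$, where $F^\la$ is the standard partial flag of \eqref{std flag} and $P_\la(q)=\Stab_{G(q)}(F^\la)$. Thus $\mathbb Z\sX=\bigoplus_\la \mathbb Z[G(q)/P_\la(q)]$ as a left $\mathbb Z G(q)$-module.

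Next I would invoke the Iwahori-type theorem for the split BN-pair of $G(q)=\O_{2r}(q)$: using the previous proposition's identification $H_{q,1}\cong \fkF_{G(q)}(\sB_r\times \sB_r)$ (with $T_j\leftrightarrow \tau_j$; here the unequal parameter shows up precisely through the degenerate relation $\tau_r^2=\mathbf 1$, which is compatible with $s_r$ being the swap of two non-orthogonal vectors in a maximal isotropic subspace), one obtains a right $H_{q,1}$-module isomorphism $\mathbb Z[G(q)/P_\la(q)]\cong x_\la H_{q,1}$. Summing over $\la$ yields a $(\mathbb Z G(q), H_{q,1})$-bimodule identification $\mathbb Z\sX\cong \bigoplus_\la x_\la H_{q,1}$, which in combination with Lemma \ref{scott}(2) applied to $X=\sX$ gives
$$
S^\scb_{q,1}(n,r)=\End_{H_{q,1}}\Bigl(\bigoplus_\la x_\la H_{q,1}\Bigr)\cong \End_{\mathbb Z G(q)}(\mathbb Z\sX)^{\op}\cong \fkF_{G(q)}(\sX\times \sX).
$$

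For the basis matching, both $\{e_A|_{\bsq=q}\mid A\in\Xi_{N,2r}\}$ and $\{f_{\sO_A}\mid A\in\Xi_{N,2r}\}$ are indexed by $\Xi_{N,2r}$: on the algebraic side, $e_A$ corresponds under $\fkd$ (Lemma \ref{map kap}(3)) to the double coset $W_\la d_A W_\mu$ with $\wla=\ro(A)$, $\wmu=\co(A)$; on the geometric side, $f_{\sO_A}$ corresponds under $\fkm$ (Corollary \ref{bij fkm}) to the $G(q)$-orbit of $(F^\wla,\dot d_A F^\wmu)$, which is exactly $\sO_A$ by Lemma \ref{MforB}(2). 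I would then verify that the isomorphism above sends $e_A|_{\bsq=q}$ to $f_{\sO_A}$ by evaluating both endomorphisms on the coset $x_\mu\cdot 1\in x_\mu H_{q,1}$ (corresponding to the coset of $F^\wmu$ in $\mathbb Z[G(q)/P_\mu(q)]$): the defining formula \eqref{e_A} gives $e_A(x_\mu)=\sum_{w\in W_\la d_A W_\mu}T_w$, which under the Iwahori identification is the sum of the $P_\la(q)$-cosets constituting the $G(q)$-orbit $\sO_A.F^\wmu$, matching the convolution description of $f_{\sO_A}$ via \eqref{f_Of_O'}.

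The main obstacle will be setting up cleanly the Iwahori identification $\mathbb Z[G(q)/P_\la(q)]\cong x_\la H_{q,1}$ in the unequal-parameter setting and checking that it intertwines the convolution action on flags with the Hecke action on $x_\la H_{q,1}$, in particular tracking the special role of the generator $T_r$ with $T_r^2=1$. Once this compatibility is in place, the remaining assertions are bookkeeping via Lemma \ref{map kap}(3), Lemma \ref{MforB}(2), and Corollary \ref{bij fkm}.
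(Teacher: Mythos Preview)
Your proposal is correct and follows essentially the same approach as the paper, which simply states that the proof is ``standard by Lemma~\ref{scott}(2) and an argument similar to that of \cite[Th.~13.15]{DDPW}.'' You have in fact supplied precisely the details that this pointer is meant to invoke: the Iwahori identification $\mathbb Z[G(q)/P_\la(q)]\cong x_\la H_{q,1}$, the application of Lemma~\ref{scott}(2) to $X=\sX$, and the matching of the double-coset basis with the orbital basis via $\fkd$ and $\fkm$ (Lemma~\ref{map kap}(3), Lemma~\ref{MforB}(2), Corollary~\ref{bij fkm}).
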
 
The proof is standard by Lemma \ref{scott}(2) and an argument similar to that of \cite[Th.~13.15]{DDPW}.

\section{A natural basis and geometric setting for $S^\scd_\bsq(n,r)$}
We describe the natural (or double coset, or orbital) basis for the $\bsq$-Schur algebra of type $D$ according to its algebraic or geometric setting.

\subsection{The algebraic definition of the natural basis}Recall the Hecke algebra $H=H_{\bsq,1}$ of type $B_r$ and its subalgebra $\cH=\cH_\bsq$ of type $D_r$. By restriction, every $H$-module $M$ is an $\cH$-module, denoted by $M|_{\cH}$. For any $\al\in\La^\scd(n,r)$, let
$\cx_\al=\sum_{w\in\cW_\al}T_w$.

\begin{lem}
 For any $\la\in\La(n+1,r)$, there are $\cH$-module isomorphisms:
$$x_\la H|_{\cH} \cong \begin{cases} 
\cx_{\la^+}\cH\oplus \cx_{\la^-}\cH,&\text{ if }\la_{n+1}=0;\\
\cx_{\la^\oo}\check H,&\text{ if }\la_{n+1}\neq0.\end{cases}$$
\end{lem}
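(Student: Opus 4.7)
The plan is to exploit the degenerate quadratic relation $T_r^2=1$ in $H=H_{\bsq,1}$ together with the description in Corollary~\ref{par cor} of how $\cW_{\la^\ep}$ relates to $W_\la$ via the graph automorphism $\fkf$. Since $T_r^2=1$, the identity $T_rT_w=T_{s_rw}$ holds for every $w\in W$: when $\ell(s_rw)>\ell(w)$ this is the defining rule, and when $\ell(s_rw)<\ell(w)$ one has $T_rT_w=T_r^2T_{s_rw}=T_{s_rw}$. Combined with the coset decomposition $W=\cW\sqcup s_r\cW=\cW\sqcup\cW s_r$ (from \eqref{n_r}), this yields the rank-$2$ decomposition
\begin{equation*}
H=\cH\oplus T_r\cH=\cH\oplus\cH T_r,
\end{equation*}
both as left and right $\cH$-modules; the equality $\cH T_r=T_r\cH$ is because conjugation by $s_r$ is the graph automorphism $\fkf$ of $\cW$, so $T_rT_wT_r=T_{s_rws_r}\in\cH$ for $w\in\cW$.

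Next I factorise $x_\la$ according to whether $\la_{n+1}$ vanishes. If $\la_{n+1}=0$ then $W_\la\subseteq\oW\subseteq\cW$, so $x_\la\in\cH$, and Corollary~\ref{par cor}(1)--(2) gives $W_\la=\cW_{\la^+}$ and $s_rW_\la s_r=\cW_{\la^-}$; summing therefore yields $x_\la=\cx_{\la^+}$ and $T_rx_\la T_r=\cx_{\la^-}$. If $\la_{n+1}\neq 0$ then $s_r\in W_\la$, the subgroup $\cW_\la=\cW_{\la^\oo}$ has index $2$ in $W_\la$, and $s_r\cW_\la=\cW_\la s_r$ by Corollary~\ref{par cor}(3); summing over the two cosets produces
\begin{equation*}
x_\la=(1+T_r)\cx_{\la^\oo}=\cx_{\la^\oo}(1+T_r).
\end{equation*}

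For $\la_{n+1}=0$, the containment $x_\la\in\cH$ forces the two summands in $x_\la H=x_\la\cH+x_\la T_r\cH$ to lie in $\cH$ and $T_r\cH$ respectively, hence the sum is direct. Left multiplication by the unit $T_r$ then induces a right-$\cH$-module isomorphism $x_\la T_r\cH\xrightarrow{\sim}T_rx_\la T_r\cH=\cx_{\la^-}\cH$, while $x_\la\cH=\cx_{\la^+}\cH$ is tautological; this settles the first case (both subcases $\la_\sfm>1$ and $\la_\sfm=1$ uniformly, with $\cx_{\la^+}=\cx_{\la^-}$ in the latter). For $\la_{n+1}\neq 0$, one computes $x_\la T_r=(1+T_r)\cx_{\la^\oo}T_r=(T_r+1)\cx_{\la^\oo}=x_\la$, so $x_\la H=x_\la\cH$ and the rank-$2$ decomposition collapses. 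Define
\begin{equation*}
\phi\colon\cx_{\la^\oo}\cH\longrightarrow x_\la H,\quad\phi(y)=(1+T_r)y;
\end{equation*}
this is a right-$\cH$-module map (left and right multiplications commute), surjective since $x_\la H=(1+T_r)\cx_{\la^\oo}\cH$, and injective since $(1+T_r)y=0$ with $y\in\cH$ forces $y=-T_ry\in\cH\cap T_r\cH=0$.

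The subtle point, which I expect to be the main obstacle, is recognising that the obvious ``rank-$2$'' decomposition $x_\la H=x_\la\cH\oplus x_\la T_r\cH$ is direct \emph{only} when $x_\la\in\cH$ (Case $\la_{n+1}=0$); in Case $\la_{n+1}\neq 0$ the two summands coincide and one must instead use the non-invertible element $1+T_r$ as an intertwiner, with the key algebraic input being the $\fkf$-stability $s_r\cW_\la s_r=\cW_\la$ coming from Corollary~\ref{par cor}(3).
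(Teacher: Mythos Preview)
Your proof is correct and follows essentially the same approach as the paper: both use the decomposition $H=\cH\oplus T_r\cH$ arising from $W=\cW\sqcup s_r\cW$, then invoke Corollary~\ref{par cor} to identify $x_\la=\cx_{\la^+}$, $T_rx_\la T_r=\cx_{\la^-}$ (when $\la_{n+1}=0$) or $x_\la=(1+T_r)\cx_{\la^\oo}$ (when $\la_{n+1}\neq0$). You supply more detail than the paper's terse argument---in particular the explicit injectivity check for the intertwiner $\phi(y)=(1+T_r)y$ via $\cH\cap T_r\cH=0$, and the verification that $T_rT_w=T_{s_rw}$ holds unconditionally thanks to $T_r^2=1$---but the skeleton is the same.
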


\begin{proof} 
Since $W=\check W\cup s_r\check W$ is a disjoint union, we have $H=\cH\oplus T_r\cH$. 
Thus,
$$x_\la H=\begin{cases}
x_{\la}\cH\oplus x_\la T_r\check H,&\text{ if }\la_{n+1}=0;\\
 x_{\la}\check H,&\text{ if }\la_{n+1}\neq0.\\\end{cases}$$
If $\la_{n+1}=0$, then $x_\la=\cx_{\la^+}$ and $T_r x_\la T_r=\cx_{\la^-}$ by Corollary \ref{par cor}(1)\&(2).
We have $x_\la \cH=\cx_{\la^+}\cH$ and $x_\la T_r\check H=T_r\cx_{\la^-}\cH\cong \cx_{\la^-}\cH$. If $\la_{n+1}\neq0$, by Corollary \ref{par cor}(3), we have either $x_\la=\cx_{\la^\oo}$ or  $x_\la=(1+T_r)\cx_{\la^\oo}$. In both cases, we have $x_\la H|_{\cH}\cong \cx_{\la^\oo}\cH$.
\end{proof}

\begin{prop}\label{kappa-D} 
There is an $\sA$-algebra  isomorphism
$S_\bsq^\kappa(n,r)\cong S_\bsq^\scd(n,r)$. By base change, it extends to a $\sZ$-algebra isomorphism
$\sS_\up^\kappa(n,r)\cong \sS_\up^\scd(n,r).$
\end{prop}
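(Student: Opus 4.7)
The plan is to deduce the isomorphism directly from the preceding lemma by summing the module decompositions over all $\lambda \in \Lambda(n+1,r)$ and recognising the right-hand side as the defining direct sum for $S^\scd_\bsq(n,r)$.

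First, I would partition $\Lambda(n+1,r)$ into the two subsets $\{\la : \la_{n+1}=0\}$ and $\{\la : \la_{n+1}\neq 0\}$ and apply the previous lemma term by term. This produces a $\cH$-module isomorphism
\[
\bigoplus_{\la\in\La(n+1,r)} x_\la H|_{\cH}
\;\cong\;
\Big(\bigoplus_{\la:\la_{n+1}=0}\cx_{\la^+}\cH\oplus\cx_{\la^-}\cH\Big)
\;\oplus\;
\Big(\bigoplus_{\la:\la_{n+1}\neq 0}\cx_{\la^\oo}\cH\Big).
\]
Next, I would invoke the definition \eqref{laD}, namely
$\La^\scd(n,r)=\La^\bullet(n+1,r)\sqcup\La^\circ_+(n+1,r)\sqcup\La^\circ_-(n+1,r)$, where the three pieces are indexed respectively by those $\la$ with $\la_{n+1}\neq 0$ (giving $\la^\oo$) and those $\la$ with $\la_{n+1}=0$ (giving $\la^+$ and $\la^-$). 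Consequently the right-hand side above is precisely $\bigoplus_{\al\in\La^\scd(n,r)}\cx_\al\cH$.

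Taking endomorphism algebras over $\cH$ on both sides of this isomorphism, and using that the endomorphism algebra is an $\sA$-algebra invariant, one obtains
\[
S_\bsq^\kappa(n,r)=\End_{\cH}\!\Big(\bigoplus_{\la\in\La(n+1,r)} x_\la H|_{\cH}\Big)
\;\cong\;
\End_{\cH}\!\Big(\bigoplus_{\al\in\La^\scd(n,r)}\cx_\al\cH\Big)=S^\scd_\bsq(n,r),
\]
which is the desired $\sA$-algebra isomorphism. Finally, since both algebras are defined by endomorphisms of free $\sA$-modules and $\sZ$ is flat over $\sA$, tensoring with $\sZ$ commutes with $\End_{\cH}$, so the $\sA$-isomorphism extends to a $\sZ$-algebra isomorphism $\sS_\up^\kappa(n,r)\cong\sS_\up^\scd(n,r)$.

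The only genuine content is the application of the preceding lemma; the main subtlety to check is simply that the indexing of $\La^\scd(n,r)$ given in \eqref{laD} matches the summands produced by the lemma (in particular that distinct $\la$ yielding equal stabiliser subgroups, as in Remark after Corollary \ref{par cor}, are nonetheless indexed as distinct elements $\la^+,\la^-,\la^\oo$), so the direct sum on the $\scd$-side is correctly formed. No substantial obstacle is anticipated.
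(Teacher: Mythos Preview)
Your proposal is correct and follows essentially the same approach as the paper: the paper simply invokes the preceding lemma to obtain a $\cH$-module isomorphism $g:\bigoplus_{\la}x_\la H|_{\cH}\to\bigoplus_{\al}\cx_\al\cH$ and then transports endomorphisms via $\phi\mapsto g^{-1}\phi g$. Your write-up is in fact more detailed than the paper's, spelling out the matching of the index set $\La^\scd(n,r)$ with the summands and the base-change step.
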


\begin{proof} By the above lemma, there is an $\cH$-module isomorphism
$$g:\bigoplus_{\la\in\La(n+1,r)}x_\la H|_{\cH}\longrightarrow \bigoplus_{\al\in\La^\scd(n,r)}\cx_\al\cH.$$
This $\cH$-module isomorphism induces the required $\sA$-algebra isomorphism
$$\widetilde g: S_\bsq^\scd(n,r)\longrightarrow S_\bsq^\kappa(n,r),\;\; \phi\longmapsto g^{-1}\phi g.$$
The proposition is proved.
\end{proof}

Similar to Corollary \ref{stdbsB}, the above isomorphism allows us to define the natural basis for $S_\bsq^\ka(n,r)$.

For $\al,\be\in\La^\scd(n,r)$, let $\csD_{\al,\be}$ denote the set of distinguished $\cW_\al$-$\cW_\be$ double coset representatives. Let
\begin{equation}\label{dnr}
\csD(n,r)=\{(\al,d,\be)\mid \al,\be\in\Lambdad, d\in\csD_{\al,\be}\}.
\end{equation}
For each $(\al,d,\be)\in\Lambdad$, define $\phi_{\al,\be}^d\in S_\bsq^\scd(n,r)$ by setting
$$\phi_{\al,\be}^d(\cx_\ga h)=\de_{\be,\ga}\sum_{w\in\cW_\al d\cW_\be}T_wh=\de_{\be,\ga}T_{\cW_\al d\cW_\be}h,\text{ for all $h\in\cH$.}$$

\begin{cor}\label{stdbsD} The set $\{\phi_{\al,\be}^d\mid (\al,d,\be)\in\csD(n,r)\}$ forms a basis for $S_\bsq^\scd(n,r)$.
We call this basis the {\it natural basis} for $S_\bsq^\scd(n,r)$. Moreover, the idempotents $1_\al$ considered in Lemma \ref{idemp} take the form $1_\al=\phi_{\al,\al}^1$.
\end{cor}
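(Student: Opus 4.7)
The strategy is to follow the classical Dipper--James argument for $q$-Schur algebras applied to the present Hecke-algebra setup. First I would use the obvious direct-sum decomposition
\begin{equation*}
S_\bsq^\scd(n,r) \;=\; \bigoplus_{\al,\be \in \La^\scd(n,r)} \Hom_{\cH}\bigl(\cx_\be \cH,\,\cx_\al \cH\bigr),
\end{equation*}
reducing the claim to showing that each summand has $\sA$-basis $\{\phi^d_{\al,\be} : d \in \csD_{\al,\be}\}$. Next, I would verify that each $\phi^d_{\al,\be}$ is a well-defined $\cH$-homomorphism. Since $\cx_\be \cH$ is the cyclic $\cH$-module generated by $\cx_\be$ subject to the relations $\cx_\be T_v = \bsq^{\ell(v)}\cx_\be$ for $v \in \cW_\be$, well-definedness amounts to two facts: the double-coset sum $T_{\cW_\al d \cW_\be}$ lies in $\cx_\al \cH$ (via the standard factorization $T_{\cW_\al d \cW_\be} = \cx_\al T_d \sum_v T_v$ over distinguished right coset representatives $v$ of $d^{-1}\cW_\al d \cap \cW_\be$ in $\cW_\be$), and the right-invariance $T_{\cW_\al d \cW_\be}T_v = \bsq^{\ell(v)}T_{\cW_\al d \cW_\be}$ for $v \in \cW_\be$, a standard computation for double-coset sums.

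Linear independence is then immediate: distinct $d \in \csD_{\al,\be}$ give disjoint double cosets, so the elements $T_{\cW_\al d \cW_\be}$ have disjoint support in the $T_w$-basis of $\cH$, hence the values $\phi^d_{\al,\be}(\cx_\be)$ are $\sA$-linearly independent. For spanning, given an arbitrary $\phi \in \Hom_\cH(\cx_\be \cH, \cx_\al \cH)$, I would expand $\phi(\cx_\be) = \sum_{w \in \cW} a_w T_w$ and exploit two simultaneous invariance properties: the condition $\phi(\cx_\be) \in \cx_\al \cH$ forces the left-invariance $T_u \phi(\cx_\be) = \bsq^{\ell(u)}\phi(\cx_\be)$ for $u \in \cW_\al$ (because $T_u \cx_\al = \bsq^{\ell(u)}\cx_\al$), while the $\cH$-linearity of $\phi$ combined with the defining relations of $\cx_\be \cH$ forces the right-invariance $\phi(\cx_\be)T_v = \bsq^{\ell(v)}\phi(\cx_\be)$ for $v \in \cW_\be$. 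A standard multiplication argument using the Coxeter length function then forces the coefficients $a_w$ to be constant on each $\cW_\al$-$\cW_\be$ double coset, so $\phi(\cx_\be)$ is an $\sA$-linear combination of the $T_{\cW_\al d \cW_\be}$ for $d \in \csD_{\al,\be}$, completing the spanning proof.

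Finally, the idempotent identification $1_\al = \phi^1_{\al,\al}$ is immediate from unwinding the definitions: $\phi^1_{\al,\al}(\cx_\ga) = \de_{\al,\ga}\,T_{\cW_\al \cdot 1 \cdot \cW_\al} = \de_{\al,\ga}\,\cx_\al$, so $\phi^1_{\al,\al}$ acts as the identity on $\cx_\al \cH$ and annihilates $\cx_\be \cH$ for $\be \neq \al$, which is exactly the description of $1_\al$ in Lemma \ref{idemp}. The main obstacle is the span argument: one must carefully combine the two one-sided invariances to conclude that $\phi(\cx_\be)$ is a linear combination of double-coset sums. Everything else is routine bookkeeping, building on Proposition \ref{stdbsB} and the standard theory of $q$-permutation modules; as an alternative route, one could instead transport the basis of $S_\bsq^\kappa(n,r)$ through the isomorphism of Proposition \ref{kappa-D}, using the decomposition $x_\la H|_{\cH}$ provided by the lemma immediately preceding that proposition.
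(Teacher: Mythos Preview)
Your proposal is correct. The paper states this corollary without proof, treating it as a standard consequence of the general theory of $q$-permutation modules over Hecke algebras (cf.\ the analogous Proposition~\ref{stdbsB} for type $B$); your Dipper--James style argument is precisely that standard proof, and your alternative route via Proposition~\ref{kappa-D} is the one the paper's logical structure implicitly suggests by placing this result as a corollary to it.
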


\subsection{Parametrising $\SO_{2r}(q)$-orbits}
We now construct the natural basis for $S_\bsq^\kappa(n,r)$ defined in \eqref{kappa} via the isomorphism in Proposition \ref{geosettingB}.

Let $\cG=\cG(q)=\SO_{2r}(q)$ (and $G=G(q)=\O_{2r}(q)$). We start with parametrizing $\cG$-orbits in the complete flag variety $\sB_r$ introduced in Section 3.  Let $\sfF=\sfFs$ be the standard complete flag with $\sfF_i=\lr{e_1,\ldots,e_i}$ and let 
$$\mathcal{B}_r^{+} :=\cG(q).\mathsf F^s,\qquad \mathcal{B}_r^{-}:= \cG(q)\dot s_r.\mathsf F^s.$$ 
If $M_r=\lr{e_1,e_2,\ldots,e_{r}}$ is the {\it standard maximal isotropic subspace}  in $\mathbb{F}_q^{2r}$ (of dimension $r$),
then 
\begin{equation}\label{max iso}
\mathcal{B}_r^{+}=\{\sfF\in\mathcal B_r\mid \dim(\sfF_r \cap M_r)\equiv r \text{ mod }2\},\quad
\mathcal{B}_r^{-}=\{\sfF\in\mathcal B_r\mid \dim(\sfF_r \cap M_r)\equiv r-1 \text{ mod }2\}.
\end{equation}
We have $\mathcal{B}_r =\mathcal{B}^{+}_r\sqcup\mathcal{B}_r^{-}$ and
$\mathcal{B}_r^{+}\cong \cG(q)/B(q),\; \mathcal{B}_r^{-}\cong \cG(q)/B(q)^{\dot s_r}$, where $B(q)=\text{Stab}_{\cG(q)}(\mathsf F^s)$ and
$B(q)^{\dot s_r}=\text{Stab}_{\cG(q)}(\dot s_r\mathsf F^s)$. 

To parametrize $\cG(q)$-orbits in the $n$-step isotropic (partial) flag variety $\sX:= \mathcal{F}_{n,2r}^\jmath $,
we use the $G(q)$-orbits.
Let $\sX/G(q)$ denote the set of all $G(q)$-orbits. Then \eqref{dimvec} implies a bijection
$$f: \sX/G(q)\longrightarrow \La(n+1,r),\;\;G(q).F\longmapsto \big(\dim(F_1/F_0),\ldots,\dim(F_n/F_{n-1}),\dim(F_{n+1}/F_n)/2\big).$$
Let $\sO_\la=f^{-1}(\la)$. Then $\sX$ is a disjoint union of orbits $\sX=\sqcup_{\la\in\La(n+1,r)}\sO_\la$.

\begin{lem}
An orbit $\sO_\la=G(q).F$ splits into two $\cG(q)$-orbits if and only if $\la_{n+1}=0$.
\end{lem}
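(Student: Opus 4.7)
The plan is to reduce the splitting of $\sO_\la$ under $\cG(q)$ to the standard index-two-subgroup criterion and then read off the condition from Lemma \ref{lab}. First I would fix the standard representative $F^\la\in\sO_\la$ described in \eqref{std flag}, and use the identity $P_\la(q)=\text{Stab}_{G(q)}(F^\la)$ recorded just before Lemma \ref{lab} to identify $\sO_\la\cong G(q)/P_\la(q)$ as $G(q)$-sets. Because $\cG(q)$ is normal of index two in $G(q)$, the splitting behaviour depends only on $\la$ and not on the chosen representative in $\sO_\la$, so it is enough to analyze the fixed $P_\la(q)$.

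Next I would invoke the general principle: for a normal subgroup $\cG\trianglelefteq G$ of index two and any subgroup $P\leq G$, the $\cG$-orbits on $G/P$ correspond bijectively to double cosets $\cG\backslash G/P$, and a short calculation shows $gP$ and $g'P$ lie in the same $\cG$-orbit iff $g^{-1}g'\in \cG P$. Hence the number of $\cG$-orbits equals $[G:\cG P]$, which is $2$ when $P\subseteq\cG$ and $1$ otherwise (since $\cG P$ is either $\cG$ or $G$). Applying this to $G=G(q)$, $\cG=\cG(q)$, and $P=P_\la(q)$, I get that $\sO_\la$ splits into exactly two $\cG(q)$-orbits if and only if $P_\la(q)\subseteq\cG(q)=\SO_{2r}(q)$.

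Finally, Lemma \ref{lab} already asserts that $P_\la(q)\subseteq\SO_{2r}(q)$ precisely when $\la_{n+1}=0$, so stitching the two steps together gives the claimed equivalence. There is really no genuine obstacle beyond Lemma \ref{lab}: all the substance — producing an element of $P_\la(q)$ with determinant $-1$ when $\la_{n+1}\geq 1$ (for instance $\dot s_r$, which stabilizes $F^\la$ as soon as $r,r+1$ sit in the same block $R^{\widehat\la}_{n+1}$), and verifying that every block-diagonal element of $P_\la(q)$ has determinant $1$ when $\la_{n+1}=0$ — is packaged there.
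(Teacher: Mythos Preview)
Your proof is correct and essentially the same as the paper's. The paper also reduces to Lemma \ref{lab} via the index-two-subgroup criterion, though it phrases the non-splitting direction concretely by exhibiting the specific coset representative $\dot s_r$ (noting $\dot s_r\in\text{Stab}_{G(q)}(F)$ exactly when $\la_{n+1}>0$) rather than invoking the abstract double-coset count $[G:\cG P]$.
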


\begin{proof}
Since $G(q)=\cG(q)\sqcup\cG(q)\dot s_r$, it follows that 
$G(q).F=\cG(q).F\cup \cG(q)\dot s_r.F$.
Thus, by Lemma \ref{lab},
$$\la_{n+1}>0\iff \dot s_r\in\text{Stab}_{G(q)}(F)\iff G(q).F=\cG(q).F.$$
Hence, $G(q).F=\cG(q).F\sqcup \cG(q)\dot s_r.F\iff \la_{n+1}=0$. The lemma follows.
\end{proof}

Let
$$\csO_{\la^\oo}=\sO_\la,\;\;\csO_{\la^+}=\cG(q).\sfF^s,\;\; \text{ and }\;\;\csO_{\la^-}=\cG(q)\dot s_r.\sfF^s.$$
By the Lemma, we may decompose $\sX$ into a disjoint union of $\cG(q)$-orbits:
\begin{equation}\label{SOorb}
\sX=\Big(\bigsqcup_{\la\in\La^\oo(n+1,r)} \csO_{\la^\oo}\Big)\sqcup\Big(\bigsqcup_{\la\in \La^\circ(n+1,r)} (\csO_{\la^+}\sqcup\csO_{\la^-})\Big).
\end{equation}

We now describe the decomposition of $\sX\times\sX$ into $\cG(q)$-orbits.

If $G(q).(F,F^{'})$ is a $G(q)$-orbit in $\sX\times \sX$, then it can be written as $G(q).(F,F^{'})=\cG(q).(F,F^{'})\cup\cG(q)g.(F,F^{'})$ for some $g\in G(q)\setminus \cG(q)$. 
Since the map $\fkm$ is a bijection (Corollary 5.7), we shall denote a $G(q)$-orbit by $ \mathcal O_A:=G(q).(F,F^{'})$ if $\fkm(F,F^\prime)=A$, for some $A\in \Xi_{N,2r}$.

\begin{prop}\label{split}
Let $A=(a_{ij}) \in \Xi_{N,2r}$. The $G(q)$-orbit $ \mathcal O_A$ splits into two $ \cG(q)$-orbits if and only if $a_{n+1,n+1}=0$.
\end{prop}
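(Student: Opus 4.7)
The plan is to detect splitting via the determinant character on the $G(q)$-stabilizer. Since $[G(q):\cG(q)] = 2$ with $G(q) = \cG(q) \sqcup \cG(q)\dot s_r$ and $\det(\dot s_r) = -1$, the $G(q)$-orbit $\sO_A$ splits into two $\cG(q)$-orbits if and only if $\Stab_{G(q)}(F, F') \subseteq \cG(q)$, i.e., the determinant character is trivial on this stabilizer.

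I would take the standard representative from Lemma~\ref{MforB}(2), namely $(F, F') = (F^\wla, F^A)$ with $\wla = \ro(A)$, and set $V_{j,l} := \lr{e_k \mid k \in I_{j,l}}$ for the partition $[1,2r] = \bigsqcup_{j,l} I_{j,l}$ from \eqref{Ijl}. Then $F_i = \bigoplus_{j \leq i,\, l} V_{j,l}$ and $F'_i = \bigoplus_{j,\, l \leq i} V_{j,l}$, so the refined isotropic flag $(F, F')_{\centerdot\centerdot}$ of \eqref{Fij} has graded pieces naturally identified with the $V_{j,l}$ of dimension $a_{j,l}$. Since $d_A \in W^\scb = (\fS_{2r})^\theta$ and both $\wla, \wmu$ are palindromic, the involution $\theta: k \mapsto 2r+1-k$ sends $I_{j,l}$ to $I_{N+1-j, N+1-l}$. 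By \eqref{formJ}, this means the form pairs $V_{j,l}$ nondegenerately with $V_{N+1-j, N+1-l}$ when $(j,l) \neq (n+1, n+1)$, while $V_{n+1, n+1}$ is $\theta$-stable and inherits a nondegenerate symmetric form of dimension $a_{n+1, n+1}$.

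Next, the stabilizer $P := \Stab_{G(q)}(F, F')$ is the parabolic subgroup of $G(q)$ preserving this refined flag, and its Levi quotient $L$ decomposes, in view of the pairing structure, as
\[
L \cong \Big(\prod_{\{(j,l),\, (N+1-j, N+1-l)\}:\, (j,l) \neq (n+1,n+1)} \GL(V_{j,l})\Big) \times \O(V_{n+1, n+1}),
\]
where each $\GL(V_{j,l})$ factor acts on its partner $V_{N+1-j, N+1-l}$ via the inverse-transpose with respect to the induced pairing. Consequently, each $\GL$ pair contributes $\det(g)\cdot\det(g)^{-1} = 1$ to the total determinant, and the unipotent radical of $P$ contributes $1$. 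Thus $\det(P) = \det(\O(V_{n+1, n+1}))$, which equals $\{1\}$ when $a_{n+1,n+1} = 0$ and $\{\pm 1\}$ when $a_{n+1,n+1} \geq 1$ (a reflection on $V_{n+1, n+1}$ witnesses determinant $-1$, available since $\mathbb F_q$ has odd characteristic). Therefore $P \subseteq \cG(q)$ iff $a_{n+1, n+1} = 0$, proving the proposition.

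The principal delicate point will be justifying the Levi decomposition rigorously, especially verifying that the middle factor of $L$ is the full orthogonal group $\O(V_{n+1, n+1})$ of the restricted form and that each such orthogonal transformation genuinely lifts to an element of $P$. This should follow cleanly from the $\theta$-compatible direct-sum decomposition $\mathbb F_q^{2r} = \bigoplus_{j,l} V_{j,l}$, but one must keep the unipotent radical carefully in check and confirm that the paired $\GL$ blocks assemble into bona fide elements of $G(q) = \O_{2r}(q)$ rather than merely of $\GL_{2r}(q)$.
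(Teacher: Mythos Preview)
Your approach is essentially the paper's: reduce to whether $\Stab_{G(q)}(F,F')$ contains an element of determinant $-1$, and control this via the parabolic of the refined flag $(F,F')_{\centerdot\centerdot}$. One correction is needed, though. Your assertion that $P := \Stab_{G(q)}(F,F')$ \emph{equals} the parabolic $P_\nu$ preserving the refined flag is false in general; stabilizing every $F_{i,j}=F_{i-1}+F_i\cap F'_j$ forces $g$ to fix each $F_i=F_{i,N}$ but not each $F'_j$, and indeed the paper records from \cite{Ct} that $U_\la\cdot P = P_\nu$ with $U_\la$ typically nontrivial. Fortunately your argument only needs the containment $P\subseteq P_\nu$ for the $a_{n+1,n+1}=0$ direction (then $\det(P)\subseteq\det(P_\nu)=\det(L_\nu)=\{1\}$ by your Levi computation, which is exactly the paper's use of Lemma~\ref{lab} applied to the refined flag). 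For the other direction you correctly lift a reflection on $V_{n+1,n+1}$, extended by the identity on its orthogonal complement $\bigoplus_{(j,l)\neq(n+1,n+1)}V_{j,l}$, to an explicit element of $P$ with determinant $-1$; the paper simply takes this element to be $\dot s_r$. With the equality weakened to containment, your proof goes through and matches the paper's.
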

\begin{proof} Recall Lemma \ref{N^2} and consider the map
        \begin{equation}\label{phi}
           \Phi: \mathcal F_{n,r}^\jmath \times \mathcal F_{n,r}^\jmath \longrightarrow \mathcal F_{n(N+1),r}^\jmath,\ ({F}, F^{'})\longmapsto  F_{\!\centerdot\centerdot}=(F,F')_{\!\centerdot\centerdot}=( F_{i,j})_{1\leq i,j\leq N},
        \end{equation}where $F_{i,j}=F_{i-1}+ F_i\cap  F^{'}_j$ is defined as in \eqref{Fij}. This map is $G(q)$-equivariant since $\Phi(g(F,F^{'}))=g\Phi(F,F^{'})$ for any $g\in G(q).$  %
      Hence, 
$\text{Stab}_{G(q)}(F,F')$ is a subgroup of 
$\text{Stab}_{G(q)}(\{F_{i,j}\}_{i,j}).$ Thus, if we choose $(F,F')\in\sO_A$ such that $F=F^\la$ and $F'=\dot dF^\mu$, where $\la=\co(A)$, $\mu=\co(A)$ and $d\in \sD_{\la,\mu}$,  then, with the notation in \S3.4, $\text{Stab}_{G(q)}(F^\la,\dot wF^\mu)=P_\la \cap{}^{\dot w}P_\mu$ and $\text{Stab}_{G(q)}(\{F_{i,j}\}_{i,j})=P_\nu$, where $\nu$ is the composition defined by $W_\nu=W_\la\cap{}^dW_\mu$. By \cite[Prop.~2.8.4]{Ct}, $U_\la(P_\la \cap{}^{\dot w}P_\mu)=P_\nu$, where $U_\la$ is a unipotent subgroup of $P_\la$ in the Levi decomposition of $P_\la=L_\la U_\la$.


       If $a_{n+1,n+1}\neq 0,$ then $a_{n+1,n+1}$ is positive even and $a_{n+1,n+1}=|I_{n+1,n+1}|\geq 2$ by the notation in Lemma \ref{MforB}(2). Moreover, $r,r+1\in I_{n+1,n+1}$. Hence, $\dot s_r\in\text{Stab}_{G(q)}(\{F_{i,j}\}_{i,j})$ is not contained in $\cG(q)$. In other words, there exists $\dot s_r\in G(q)\setminus \cG(q)$ such that $\dot s_r\in \text{Stab}_{G(q)}(F,F')$. Hence, $\dot s_r.(F,F^{'})=(F,F')$ and so $\sO_A=\cG(q).(F,F^{'})$. 
       
       
       If $a_{n+1,n+1}= 0,$ then $\text{dim}({ F}_{n+1,n+1}/{F}_{n+1,n})=a_{n+1,n+1}=0$. Thus,
       $$\text{Stab}_{G(q)}(F,F')\leq \text{Stab}_{G(q)}(\{F_{i,j}\}_{i,j})\subseteq \cG(q).$$ Hence, $\sO_A$ splits into two $\cG(q)$-orbits 
       $$\cG(q).(F,F')\;\;\text{ and }\;\;\cG(q)g.(F,F'),$$ where $g\in G(q)\setminus \cG(q)$. This finishes the proof.
 \end{proof}

Recall the standard maximal isotropic subspace $M_r$ used in \eqref{max iso}. By the proof above and the map $\Phi$ in \eqref{phi},  we define, for the orbit $\sO_A$ containing $(F,F')$,
\begin{equation}\label{d_FF'}
\aligned
\mathcal F_{n(N+1),r}^\jmath(A)&=\Phi(\mathcal O_A)=G(q).F_{\!\centerdot\centerdot}\\
d_{F,F'}=\dim&\big((F_n+F_{n+1}\cap F'_{n+1})\cap M_r\big).
\endaligned
\end{equation} Restricting the map $\Phi$ to $\mathcal O_A$ yields a surjective $G(q)$-equivariant map $\Phi_A: \mathcal O_A\to \mathcal F_{n(N+1),r}^\jmath(A).$

\begin{defn}\label{split2}
Suppose $A=(a_{i,j})\in \Xi_{N,2r}$ and $\sO_A=G(q).(F,F')$ for some $F,F'\in\sF_{n,r}^\jmath$ and $F_{\!\centerdot\centerdot}$ as in \eqref{Fij}.
\begin{itemize}
\item[(1)] If $a_{n+1,n+1}\neq 0$, define $\dot\sO_A:=\sO_A$ to be the non-split $G(q)$-orbit.
\item[(2)]
If $a_{n+1, n+1}=0$, then we label the two $\cG(q)$-orbits as follows.
Since $F_{n+1,n}=F_{n+1,n+1}$ is a maximal isotropic subspace, the $G(q)$-orbit $\mathcal F_{n(N+1), r}^{\jmath}(A)=G(q).F_{\!\centerdot\centerdot}$ splits into two $\cG(q)$-orbits
$$\mathcal F_{n(N+1), r}^{\jmath}(A)= \mathcal F_{n(N+1), r}^{\jmath}(A)^{+} \sqcup\ \mathcal F_{n(N+1), r}^{\jmath}(A)^{-}$$
where
 $\mathcal F_{n(N+1), r}^{\jmath}(A)^{\pm}=\{\Phi(F,F')\in \mathcal F_{n(N+1), r}^{\jmath}(A)\mid d_{F,F'}\equiv r^\pm\text{ mod } 2\}$, for $\begin{cases}r^+=r;\\
r^-=r-1.\end{cases}$
Define
$$
^{+} \hspace{-0.5mm}\mathcal{O}_A:=\Phi_A^{-1}\left(\mathcal F_{n(N+1), r}^\jmath(A)^{+}\right)\text{ and  }{}^{-} \hspace{-0.5mm}\mathcal{O}_A:=\Phi_A^{-1}\left(\mathcal F_{n(N+1), r}^{\jmath}(A)^{-}\right).$$
This decomposition $\mathcal{O}_A={^{+}}\hspace{-0.5mm}\mathcal{O}_A\sqcup {^{-}}\hspace{-0.5mm}\mathcal{O}_A$ is further refined to
\begin{equation}\label{O_A}
   \mathcal{O}_A=\begin{cases}
    ^{+}\hspace{-0.5mm}\mathcal O_A^{+}\sqcup {}^{-}\hspace{-0.5mm}\mathcal O_A^{-}, &\text{ if }\sgnura=+,\\
    ^{+}\hspace{-0.5mm}\mathcal O_A^{-}\sqcup {}^{-}\hspace{-0.5mm}\mathcal O_A^{+}, &\text{ if }\sgnura=-,
\end{cases}
\end{equation} 
where $      \sgnura=
        \begin{cases}+, &\text{ if $|\urcm|$ is even;}\\
        -, &\text{ otherwise.}
    \end{cases}$
    \end{itemize}
\end{defn}
We remark that there is an alternative interpretation for the above double signed labelling, using a  
similarly defined map for the transpose $A^t$ of $A$:
$$
\Phi'_{A}:\mathcal{O}_A \longrightarrow \mathcal F_{n(N+1), r}^{\jmath}(A^t),\ ({F}, F^{'})\longmapsto  F'_{\!\centerdot\centerdot}=\big(F'_{i,j}:=F^{'}_{i-1}+ F^{'}_i\cap  F_j\big)_{1\leq i,j\leq N}.$$
Thus, $\sF_{n(N+1),r}^\jmath(A^t)=G(q).F'_{\!\centerdot\centerdot}$ splits into two orbits 
$$\aligned
\mathcal F_{n(N+1), r}^{\jmath}(A^t)&= \mathcal F_{n(N+1), r}^{\jmath}(A^t)^{+} \sqcup \mathcal F_{n(N+1), r}^{\jmath}(A^t)^{-},
\endaligned
$$
where
 $\mathcal F_{n(N+1), r}^{\jmath}(A^t)^{\pm}=\{\Phi(F,F')\in \mathcal F_{n(N+1), r}^{\jmath}(A)\mid d_{F',F}\equiv r^\pm\text{ mod } 2\}$.
 Thus, $\mathcal{O}_A=\mathcal{O}_A^{+}\sqcup \mathcal{O}_A^{-},$ where
$$\mathcal{O}_A^{+}:=\Phi_{A}^{\prime-1}\left(\mathcal F_{n(N+1), r}^\jmath(A^t)^{+}\right)\text{ and }\mathcal{O}_A^{-}:=\Phi_{A}^{\prime-1}\left(\mathcal F_{n(N+1), r}^\jmath(A^t)^{-}\right). $$

\begin{lem}\label{O_A2}
Maintain the notations given above for $A$ with $a_{n+1,n+1}=0$. We have
$$ \begin{cases}
    \mathcal{O}_A^{+}={}^{+}\hspace{-0.5mm}\mathcal{O}_A,\ \mathcal{O}_A^{-}={}^{-}\hspace{-0.5mm}\mathcal{O}_A,  &\text{ if } \sgnura=+;\\
    \mathcal{O}_A^{+}={}^{-}\hspace{-0.5mm}\mathcal{O}_A,\ \mathcal{O}_A^{-}={}^{+}\hspace{-0.5mm}\mathcal{O}_A,  &\text{ if }\sgnura=-,
\end{cases}
$$
and hence, $^{\epsilon _1}\hspace{-0.5mm}\mathcal{O}_A^{\epsilon_2}:=^{\epsilon _1}\hspace{-1.5mm}\mathcal{O}_A\cap \mathcal{O}_A^{\epsilon _2}$, for the four selections of $\epsilon_1,\epsilon_2\in\{+,-\}$.
Moreover, the $\pm$ signs $\ep_1$ and $\ep_2$ are determined by $\ep_11=(-1)^{d_{F,F'}-r}$ and $\ep_21=(-1)^{d_{F',F}-r}$.
\end{lem}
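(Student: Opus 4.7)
The plan is to reduce the assertion to a single parity identity for the dimension of an intersection, then verify this identity on a standard coordinate representative via the distinguished-double-coset property of $d_A$. First, I would note that the ``Moreover'' statement is an immediate unpacking of Definition~\ref{split2}: $(F,F')\in {}^{\ep_1}\sO_A$ iff $d_{F,F'}\equiv r^{\ep_1}\pmod 2$, i.e.\ $d_{F,F'}-r$ is even when $\ep_1=+$ and odd when $\ep_1=-$, so $\ep_1\cdot 1=(-1)^{d_{F,F'}-r}$; the $\ep_2$ formula is identical with $F'_{n+1,n+1}$ in place of $F_{n+1,n+1}$.

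For the main identifications, the hypothesis $a_{n+1,n+1}=0$ forces both $F_{n+1,n+1}$ and $F'_{n+1,n+1}$ to be Lagrangians of dimension $r$ (by the same count used in Definition~\ref{split2}). I would invoke the classical dichotomy \eqref{max iso}: two Lagrangians $L_1,L_2\subset\mathbb F_q^{2r}$ lie in the same $\cG(q)$-orbit iff $\dim(L_1\cap L_2)\equiv r\pmod 2$. Hence $\ep_1=\ep_2$ on $\sO_A$ iff $\dim(F_{n+1,n+1}\cap F'_{n+1,n+1})\equiv r\pmod 2$, reducing everything to
$$
\dim(F_{n+1,n+1}\cap F'_{n+1,n+1})\equiv r+|\urcm|\pmod 2.
$$

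To check this, I take the coordinate representative $(F,F')=(F^{\widehat\la},\dot d_A F^{\widehat\mu})\in\sO_A$ from Lemma~\ref{MforB}, so that all the subspaces in sight are spanned by subsets of $\{e_1,\ldots,e_{2r}\}$ and intersections reduce to set intersections of index sets in $[1,2r]$. The identity $(X\cup Z)\cap (Y\cup Z)=Z\cup (X\cap Y)$, applied with $X,Y,Z$ the index sets of $F_n$, $F'_n$, and $F_{n+1}\cap F'_{n+1}$ respectively (noting $X\cap Y\subseteq Z$), collapses the intersection to
$$
\dim(F_{n+1,n+1}\cap F'_{n+1,n+1})=\bigl|[1,r+\la_{n+1}]\cap d_A[1,r+\mu_{n+1}]\bigr|.
$$

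The key structural input is that $d_A\in\sD_{\widehat\la,\widehat\mu}$ is strictly increasing on each block $R^{\widehat\mu}_j$. Applied to the middle block $R^{\widehat\mu}_{n+1}=[r-\mu_{n+1}+1,r+\mu_{n+1}]$ together with $a_{n+1,n+1}=0$, this forces $d_A[r-\mu_{n+1}+1,r]\subseteq[1,r-\la_{n+1}]$ and $d_A[r+1,r+\mu_{n+1}]\subseteq[r+\la_{n+1}+1,2r]$ (the first $\mu_{n+1}$ increasing values fill rows $1,\dots,n$; none land in the empty middle row; the last $\mu_{n+1}$ fill rows $n+2,\dots,N$). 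Decomposing $[1,r+\la_{n+1}]=[1,r]\sqcup[r+1,r+\la_{n+1}]$ and $d_A[1,r+\mu_{n+1}]=d_A[1,r]\sqcup d_A[r+1,r+\mu_{n+1}]$, and combining these inclusions with the middle-middle vanishing $[r-\la_{n+1}+1,r+\la_{n+1}]\cap d_A[r-\mu_{n+1}+1,r+\mu_{n+1}]=\emptyset$ (which is $a_{n+1,n+1}=0$), all three off-diagonal products vanish, giving
$$
\bigl|[1,r+\la_{n+1}]\cap d_A[1,r+\mu_{n+1}]\bigr|=\bigl|[1,r]\cap d_A[1,r]\bigr|=r-|\urcm|,
$$
the last equality being Corollary~\ref{urcm}. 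This yields $\dim(F_{n+1,n+1}\cap F'_{n+1,n+1})=r-|\urcm|$, hence the parity identity, and thus the orbit identifications as stated. The main obstacle is the distinguished-ordering argument in the last step; once it is in place, the rest is straightforward index-set bookkeeping.
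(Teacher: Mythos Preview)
Your approach is essentially the same as the paper's: both choose the standard representative $(F^{\widehat\lambda},\dot d_A F^{\widehat\mu})$ and reduce the orbit identification to a parity computation via the Lagrangian dichotomy. In fact the paper notes directly (via \eqref{explicitFij}) that $F_{n+1,n+1}=M_r$, so your quantity $\dim(F_{n+1,n+1}\cap F'_{n+1,n+1})$ is exactly $d_{F',F}$; your extra layer (the abstract ``two Lagrangians lie in the same $\cG(q)$-orbit iff\ldots'' statement) is pleasant but not needed once that is observed.

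There is one small gap in your final count. Of the four pieces in the decomposition of $[1,r+\lambda_{n+1}]\cap d_A[1,r+\mu_{n+1}]$, two vanish from your inclusion $d_A[r+1,r+\mu_{n+1}]\subseteq[r+\lambda_{n+1}+1,2r]$, but the third, namely $[r+1,r+\lambda_{n+1}]\cap d_A[1,r]$, does not follow from the stated inclusions together with $a_{n+1,n+1}=0$ alone. You need the dual monotonicity: since $d_A^{-1}\in\sD_{\mu,\lambda}$ is distinguished, the same argument applied to $A^t$ gives $d_A^{-1}[r+1,r+\lambda_{n+1}]\subseteq[r+\mu_{n+1}+1,2r]$, which is exactly the missing vanishing. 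Alternatively, you can bypass the decomposition entirely by writing $[1,r+\lambda_{n+1}]=\bigcup_{i\le n+1}R^{\widehat\lambda}_i$ and $[1,r+\mu_{n+1}]=\bigcup_{j\le n+1}R^{\widehat\mu}_j$, so the intersection has size $\sum_{i,j\le n+1}a_{i,j}$, which a short symmetry count (using $a_{n+1,n+1}=0$) shows equals $r-|\urcm|$.
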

\begin{proof}Let $\wla=\ro(A)$ and $\wmu=\co(A)$. We may assume $F=F^\wla$ and $F'=\dot d_AF^\wmu$ and define $F_{i,j}$ (resp., $F'_{i,j}$) with respect to $(F,F')$ (resp., $(F',F)$).
Then, by \eqref{explicitFij}, $F_{n+1,n+1}=M_r$, the standard maximal isotropic subspace. On the other hand, 
\begin{equation}\label{MFF'}
\aligned
M_{F',F}&:=F'_{n+1,n+1}\cap M_r=(F'_{n}+F'_{n+1}\cap F_{n+1})\cap M_r\\
&=\dot d_A\big((F'_n+F'_{n+1}\cap\dot d_A^{-1}F_{n+1})\cap \dot d_A^{-1} M_r\big)=\dot d_A\big(F'_{n+1,n+1}\cap \dot d_A^{-1} M_r\big)
\endaligned
\end{equation} 
has dimension congruent to $r$ (mod 2) if $\sgnura=+$, since $F'_{n+1,n+1}=M_r$. This proves  $\mathcal{O}_A^{+}={}^{+}\hspace{-0.5mm}\mathcal{O}_A$. The other cases can be proved similarly.
\end{proof}

For the convenience of type $D$ theory, we introduce the following ``signed matrices''. Let 
$$\hhh:=\{A\in\Xi_{N,2r}\mid a_{n+1,n+1}\neq0  \},\;\; \quad
 \overset\circ\Xi:=\{A\in\Xi_{N,2r}\mid a_{n+1,n+1}=0  \}$$
which partition $\Xi=\Xi_{N,2r}$ into a disjoint union $\Xi=\hhh\sqcup \overset\circ\Xi$. 
\begin{defn}\label{csO}
(1) For $A\in \overset\circ\Xi$ and $\epsilon_1,\epsilon_2\in\{+,-\}$, we use
 $^{\epsilon _1}\hspace{-0.5mm}A^{\epsilon_2}$ to label the $\cG(q)$-orbit $^{\epsilon _1}\hspace{-0.5mm}\mathcal{O}_A^{\epsilon_2}$ and write 
 \begin{equation}\label{SOorbit1}
 \csO( ^{\epsilon _1}\hspace{-0.5mm}A^{\epsilon_2}):=^{\epsilon _1}\!\!\mathcal{O}_A^{\epsilon_2};
 \end{equation}
 
 (2) For $A\in\hhh$, we use $\dot A$ to label the $\cG$-orbit ${\dot\sO}_A=\sO_A$ and write 
  \begin{equation}\label{SOorbit2}\csO(\dot A):=\dot{\sO}_A.\end{equation}
 \end{defn}
Hence, as an $\cG(q)$-set, $\sX\times \sX$ is a disjoint union of orbits:
\begin{equation}\label{SOorbit}\aligned
\sX\times \sX&=\Big(\bigsqcup_{A\in\hhh}\dot\sO_A\Big)\sqcup\Big(\bigsqcup_{A\in\overset\circ\Xi_+}({}^{+}\hspace{-0.5mm}\mathcal O_A^{+}\sqcup {}^{-}\hspace{-0.5mm}\mathcal O_A^{-})\Big)
\sqcup\Big(\bigsqcup_{A\in\overset\circ\Xi_-}({}^{+}\hspace{-0.5mm}\mathcal O_A^{-}\sqcup {}^{-}\hspace{-0.5mm}\mathcal O_A^{+})\Big)\\
&=\Big(\bigsqcup_{A\in\hhh}\csO(\dot A)\Big)\sqcup\Big(\bigsqcup_{A\in\overset\circ\Xi,\ep,\ep'\in\{+,-\}}\csO({}^\ep\!\!A^{\ep'})\Big).
\endaligned
\end{equation}

We further introduce the following notation. Recall the row/column sum vectors in \eqref{roco}.

\begin{defn}
    \label{defsepxinr} Let $\Xi:= \Xi_{N,2r}$ and,
 for any $A=(a_{i,j})\in\Xi$, let $\wla=\ro(A)$ and $\wmu=\co(A)$.

(1) We further partition $ \overset\circ\Xi$ into a disjoint union $ \overset\circ\Xi=\bbb\sqcup \bbh\sqcup\hbb\sqcup\hbh$, where
\begin{equation}\begin{aligned}\nonumber
  \bbb&=\{A\in\Xi_{N,2r}|\lambda_{n+1}=0, a_{n+1,n+1}=0  \text{ and } \mu_{n+1}=0 \},\\
    \bbh&=\{A\in\Xi_{N,2r}|\lambda_{n+1}=0, a_{n+1,n+1}=0  \text{ and } \mu_{n+1}\neq0 \},\\
    \hbb&=\{A\in\Xi_{N,2r}|\lambda_{n+1}\neq0, a_{n+1,n+1}=0  \text{ and } \mu_{n+1}=0 \},\\
    \hbh&=\{A\in\Xi_{N,2r}|\lambda_{n+1}\neq0, a_{n+1,n+1}=0  \text{ and } \mu_{n+1}\neq0 \}.
\end{aligned}\end{equation}
Thus, we have $\Xi_{N,2r}=\hhh\sqcup\bbb\sqcup \bbh\sqcup\hbb\sqcup\hbh$.

(2) For $\star, *\in \{\circ,\bullet\}$, let 
\begin{equation}\label{corner}
\aligned
\sbs\hspace{-0.7mm}_{\scriptscriptstyle{+}}&:=\{{\pap{A}},{\mam{A}} \mid A\in\sbs\text{ and }
\sgnura=+\},\quad\\
\sbs\hspace{-0.7mm}_{\scriptscriptstyle{-}}&:=\{{\pam{A}},{ \map{A}} \mid A\in\sbs\text{ and }\sgnura=-
\}, \\
\endaligned
\end{equation}
and
$$\hhhd:=\{\dot A\mid A\in\hhh\},\quad \overset{\circ}\Xi_+:=\bbb_{\!+}{\sqcup}\bbh_{\!+}\sqcup\hbb_{\!+}\sqcup \hbh_{\!+},\quad
\overset\circ\Xi_-:=\bbb_{\!-}{\sqcup}\bbh_{\!-}\sqcup\hbb_{\!-}\sqcup \hbh_{\!-}.$$
(Compare \cite[(A.4.2)]{LL}. Define
\begin{equation}\label{xid}
\xid:=\hhhd \mathop{\sqcup}(\bbb_{\!+}{\sqcup}\bbb_{\!-})\sqcup (\bbh_{\!+}\sqcup \bbh_{\!-})\sqcup (\hbb_{\!+}\sqcup \hbb_{\!-})\sqcup (\hbh_{\!+}\sqcup \hbh_{\!-})=\hhhd\sqcup\overset\circ\Xi_+\sqcup \overset\circ\Xi_-.
\end{equation}
\end{defn}

\begin{rem} Consider the diagonal embedding $\sX\to\sX\times\sX, F\mapsto (F,F)$. This map is $\cG(q)$-equivariant as well as $G(q)$-equivariant. Thus, the parametrization in \eqref{SOorb} is compatible with that in Definition \ref{split2}. In other words, it induces injective maps
$$\La(n,r)\longrightarrow\Xi_{N,2r}, \la\longmapsto D_\la:=\text{diag}(\la)\;\;\text{ and }\;\;
\La^\scd(n,r)\longrightarrow\check \Xi(n,r).$$
The latter sends, for $\ep\in\{+,-\}$, $\la^\ep$ to $^\ep D_\la^\ep$ if $\la_{n+1}=0$, or $\la^\bullet$ to $\dot D_\la$ if $\la_{n+1}\neq0$.
\end{rem}

\subsection{The geometric definition of the natural basis}
Recall from Lemma \ref{map kap} the bijective $\mathfrak d(\lambda,d,\mu)=(|R^\lambda_i\cap dR^\mu_j|)_{ij}$. If $\mathfrak d(\lambda,d,\mu)=A$, then we write $\mathfrak d^{-1}(A)=(\lambda,d_A,\mu)$, where $\lambda=\ro(A)$, $\mu=\co(A)$. Note that  $d_A\in\cW$ if and only if the entries of the upper right corner matrix $\urcm$ sum to an even number.

We now extend the map $\fkd$ to the type $D$ case. Observe first that, for $A\in\Xi$ with $a_{n+1,n+1}\neq0$, 
the associated distinguished double coset representative $d_A$ fixes $r$ and $r+1$. This implies that $s_rd_As_r=d_A$.
\begin{defn}\label{defkd}
Define a map $\eta:\xid \rightarrow \dnr$ by the following rules.  
For $A\in\Xi_{N,2r}$ such that $\fkd(\la,d_A,\mu)=A$ (so $\la=\ro(A)$ and $\mu=\co(A)$), let $W_\la d_AW_\mu$ be  the associated double coset 
    in $W=\WB$. We look for the corresponding double cosets in $\cW=W^\scd$ associated with $W_\la d_AW_\mu$.
\begin{itemize}
      \item[(0)] If $A\in\hhh$, then we set  $\eta(\dot A)=(\lamz,d, \muz)$ where 
$$d=\begin{cases}
    \begin{aligned}
      &d_A, &\text{ if }  d_A\in \check W,\\
      &\text{the distinguished double coset representative for }\check W_{\lambda^\oo} s_rd_A \check W_{\mu^\oo}, &\text{ if } d_A\notin \check W.
    \end{aligned}
\end{cases}$$  
    \item[(1)] If $A\in\bbb$, then, for $d_A\in\cW$,   we set $\eta({\pap{A}})=(\lamp,d_1,\mup)$ and $\eta({\mam{A}})=(\lamm,d_2,\mum)$, where $d_1=d_A$ and $d_2$ is the distinguished  double coset representative in $\check W_{\lambda^{-}} s_rd_As_r\check W_{\mu^{-}}=s_r(\cW_\la d_A\cW_\mu) s_r$, and, for $d_A\not\in\cW$, we set $\eta({\pam{A}})=(\lamp,d_1,\mum)$ and $\eta({\map{A}})=(\lamm,d_2,\mup)$, where   $d_1$ and $d_2$ is the distinguished  double coset representatives in $\check W_{\lambda^{+}} d_As_r\check W_{\mu^{-}}$ and $\check W_{\lambda^{-}} s_rd_A\check W_{\mu^{+}}$, respectively.
     \item[(2)] If $A\in\bbh$ then,  for $d_A\in\cW$,  we set $\eta({\pap{A}})=(\lamp,d_1,\muz)$ and $\eta({\mam{A}})=(\lamm,d_2,\muz)$, where $d_1=d_A$ and $d_2$ is the distinguished  double coset representative for $\check W_{\lambda^{-}} s_rg_As_r\check W_{\mu^{\oo}}$, and,
     for $d_A\not\in\cW$, we set $\eta({\pam{A}})=(\lamp,d_1,\muz)$ and $\eta({\map{A}})=(\lamm,d_2,\muz)$ where $d_1$ and $d_2$ is the distinguished  double coset representative for $\check W_{\lambda^{+}} d_As_r\check W_{\mu^{\oo}}$ and $\check W_{\lambda^{-}} s_rd_A\check W_{\mu^{\oo}}$ respectively. 
      \item[(3)]If $A\in\hbb$, then the definition is symmetric, replacing $\la^+,\la^-$ by $\la^\bullet$ and $\mu^\bullet$ by $\mu^+,\mu^-$ in (2).
      
      \item[(4)] If $A\in\hbh$ then, for $d_A\in\cW$, we set $\eta({\pap{A}})=(\lamz,d_1,\muz)$ and $\eta({\mam{A}})=(\lamz,d_2,\muz)$, where $d_1=d_A$ and $d_2$ is the distinguished  double coset representative for $\check W_{\lambda^{\oo}}s_rd_As_r\check W_{\mu^{\oo}}$, and
for $d_A\not\in\cW$, we set $\eta({\pam{A}})=(\lamz,d_1,\muz)$ and $\eta({\map{A}})=(\lamz,d_2,\muz)$, where $d_1$ and $d_2$ is the distinguished  double coset representative for $\check W_{\lambda^{\oo}} d_As_r\check W_{\mu^{\oo}}$ and $\check W_{\lambda^{\oo}} s_rd_A\check W_{\mu^{\oo}}$ respectively.
\end{itemize}
\end{defn}

The next lemma shows that $\eta$ is well-defined and bijective. See a different bijection from $\csD(n,r)$ to $\xid$ in \cite[Lem.~A.4.1]{LL}.

\begin{lem}\label{eta}
The map $\eta$ is well-defined and is a bijective map. Hence, we obtain a bijection
$$\check{\mathfrak d}:=\eta^{-1}: \csD(n,r)\longrightarrow\xid.$$
\end{lem}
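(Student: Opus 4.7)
The plan is to verify that $\eta$ is well-defined case by case, and then establish bijectivity by constructing an explicit inverse $\check{\fkd}: \csD(n,r) \to \xid$.

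First, I would check well-definedness. In each of the five types $A \in \hhh, \bbb, \bbh, \hbb, \hbh$ and each sub-case according to whether $d_A \in \cW$ and according to the sign decoration, the element $d$ produced by Definition \ref{defkd} must be verified to lie in $\csD_{\alpha,\beta}$, i.e., to be the distinguished representative of the claimed $\cW_\alpha$-$\cW_\beta$ double coset. The essential input is Corollary \ref{par cor}: when $\la_{n+1} \neq 0$, we have $\cW_{\la^\oo} = W_\la \cap \cW$ with $W_\la = \cW_{\la^\oo} \sqcup s_r\cW_{\la^\oo}$; when $\la_{n+1} = 0$, we have $\cW_{\la^+} = W_\la \subset \cW$ and $\cW_{\la^-} = s_r W_\la s_r$. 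Combining these with the dichotomy $d_A \in \cW$ or $d_A \notin \cW$, one directly expresses $\cW_\alpha d \cW_\beta$ as the intersection with $\cW$ of an appropriate $W$-double coset, possibly $s_r$-translated on either side, and the distinguished $\cW$-representative is then read off via a length comparison.

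Next I would establish bijectivity by stratifying the target $\csD(n,r)$ according to the sign-types of $(\alpha,\beta) \in \La^\scd(n,r)^2$. The sub-cases of Definition \ref{defkd} land in disjoint strata of this stratification, so injectivity of $\eta$ reduces within each stratum to the injectivity of $\fkd$ from Lemma \ref{map kap}. For surjectivity, given $(\alpha, d, \beta) \in \csD(n,r)$, I would recover $\la, \mu \in \La(n+1,r)$ from the underlying compositions of $\alpha, \beta$; identify the $W$-double coset $W_\la d' W_\mu$ where $d' \in \{d,\, s_rd,\, ds_r,\, s_rds_r\}$ is determined by the sign-types of $\alpha, \beta$; extract its distinguished $W$-representative $d_0$; set $A := \fkd(\la, d_0, \mu)$; and attach the sign decoration determined by the sign-types of $\alpha, \beta$ together with $\sgnura = \pm$ (which records whether $d_0 \in \cW$).

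The main obstacle is the bookkeeping of the case analysis. A useful structural simplification comes from the flipping automorphism $\fkf$ of \eqref{graph auto}: since $\fkf$ preserves the $\cW$-length and, by Corollary \ref{par cor}, interchanges the two parabolic copies $\cW_{\la^+}, \cW_{\la^-}$ while fixing $\cW_{\la^\oo}$, distinguished $\cW$-double coset representatives are carried to distinguished representatives, so the cases involving $\mam{A}, \map{A}$ reduce to those for $\pap{A}, \pam{A}$ respectively. An additional clean ingredient is Lemma \ref{map kap}(3), which gives the exact criterion $a_{n+1,n+1} \geq 2$ for $s_r \in W_\la \cap {}^{d_A} W_\mu$, controlling precisely when $W_\la d_A W_\mu$ is $s_r$-stable and thus how it meets $\cW$.
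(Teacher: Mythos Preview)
Your overall strategy matches the paper's: verify well-definedness case by case using Corollary \ref{par cor}, then prove bijectivity by reconstructing the preimage from a given triple in $\csD(n,r)$. The use of the flip $\fkf$ to halve the case analysis and of Lemma \ref{map kap}(3) to control how $W_\la d_A W_\mu$ meets $\cW$ are exactly the tools the paper uses.

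There is, however, a genuine gap in your injectivity argument. You claim that ``the sub-cases of Definition \ref{defkd} land in disjoint strata'' of the stratification of $\csD(n,r)$ by sign-types of $(\alpha,\beta)$. This is false for $A\in\hbh$: both $\paep{A}$ and $\mamep{A}$ map to triples of the form $(\la^\oo,\,-\,,\mu^\oo)$, i.e.\ to the \emph{same} stratum. So injectivity here does \emph{not} reduce to injectivity of $\fkd$. What is needed is the extra assertion that the two $\cW$-double cosets $\cW_{\la^\oo} d_A\cW_{\mu^\oo}$ and $\cW_{\la^\oo} s_r d_A s_r\cW_{\mu^\oo}$ are distinct. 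This is precisely where Lemma \ref{map kap}(3) enters: since $a_{n+1,n+1}=0$ we have $s_r\notin W_\la\cap{}^{d_A}W_\mu$, hence (as $W_\la\cap{}^{d_A}W_\mu$ is a standard parabolic of $W$ not containing $s_r$) this intersection lies in $\cW$, forcing the four pieces in the decomposition of $W_\la d_A W_\mu$ to be pairwise distinct. You cite the lemma but do not deploy it here; without this step the injectivity argument is incomplete.

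The same issue resurfaces in your construction of the inverse: for $(\la^\oo,d,\mu^\oo)\in\csD(n,r)$ with the underlying matrix $A\in\hbh$, the data ``sign-types of $\alpha,\beta$ together with $\sgnura$'' do \emph{not} determine which of $\paep{A}$ or $\mamep{A}$ to assign, since both carry the same such data. You must instead compare the given $\cW$-double coset with $\cW_{\la^\oo} d_A\cW_{\mu^\oo}$ versus $\cW_{\la^\oo} s_r d_A s_r\cW_{\mu^\oo}$ directly. The paper handles both points via the explicit decompositions \eqref{(0)}--\eqref{(4)}, which make the distinctness and the matching visible.
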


\begin{proof}We first prove that $\eta$ is a well-defined injective map.
 
 (0) If $A\in\hhh$, then, by Lemma \ref{map kap}(3) and Corollary \ref{par cor}(3), $s_rd_A=d_As_r$, $\cW_\la=W_\la\cap \cW=\cW_{\la^\bullet}$,
      and $\cW_\mu=\cW_{\mu^\bullet}$. Thus,
      \begin{equation}\label{(0)}
      W_\la d_AW_\mu=
      \cW_{\la^\oo} d_A\cW_{\mu^\oo}\sqcup
      s_r \cW_{\la^\oo} d_A\cW_{\mu^\oo}.
      \end{equation}
One of the disjoint subsets is a double coset in $\cW$.

(1) If $A\in\bbb$ then, by Corollary \ref{par cor}(1) and \eqref{cW_la}, $\cW_\la=\cW_{\la^+}=W_\la$,
    $\cW_\mu=\cW_{\mu^+}=W_\mu$, and
    \begin{equation}\label{(1)} W_\la d_AW_\mu=\cW_{\la^+} d_A\cW_{\mu^+}.\end{equation}
    Thus, if $d_A\in\cW$ (resp., $d_A\not\in\cW$), then $\cW_{\la^+} d_A\cW_{\mu^+}$ and $\fkf(\cW_{\la^+} d_A\cW_{\mu^+})=
    \cW_{\la^-} s_rd_As_r\cW_{\mu^-}$ (resp., $\cW_{\la^+} d_A\cW_{\mu^+}s_r=\cW_{\la^+} d_As_r\cW_{\mu^-}$ and $\fkf(\cW_{\la^+} d_A\cW_{\mu^+}s_r)=
    \cW_{\la^-} s_rd_A\cW_{\mu^+}$)
    are distinct double cosets in $\cW$. 

 (2) If $A\in\bbh$ then, by Corollary \ref{par cor}, $W_\la=\cW_{\la^+}$ and $\cW_\mu=\cW_{\mu^\bullet}$,
     \begin{equation}\label{(2)}W_\la d_AW_\mu=(\cW_{\la^+} d_A\cW_{\mu^\oo})\sqcup (\cW_{\la^+} d_A\cW_{\mu^\oo}s_r).
     \end{equation}
    Thus, if $d_A\in\cW$ (resp., $d_A\not\in\cW$), then $\cW_{\la^+} d_A\cW_{\mu^\bullet}$ and $\fkf(\cW_{\la^+} d_A\cW_{\mu^\bullet})=
    \cW_{\la^-} s_rd_As_r\cW_{\mu^\bullet}$ (resp., $\cW_{\la^+} d_A\cW_{\mu^\bullet}s_r=\cW_{\la^+} d_As_r\cW_{\mu^\bullet}$ and $\fkf(\cW_{\la^+} d_A\cW_{\mu^\bullet}s_r)=
    \cW_{\la^-} s_rd_A\cW_{\mu^\bullet}$)
    are distinct double cosets in $\cW$. 

 (3) This case is symmetric to (2).
 
 (4) If $A\in\hbh$ then, by Corollary \ref{par cor}(3), 
        \begin{equation}\label{(4)}
        W_\la d_AW_\mu=(\cW_{\la^\oo} d_A\cW_{\mu^\oo})\sqcup
    (\cW_{\la^\oo}s_r d_As_r\cW_{\mu^\oo})\sqcup (\cW_{\la^\oo} d_As_r\cW_{\mu^\oo})\sqcup (\cW_{\la^\oo}s_r d_A\cW_{\mu^\oo}).\end{equation}
    Thus, the two distinct double cosets in $\cW$ can be seen easily.
      
  We now prove that $\eta$ is surjective.
 
Let $(\lambda^{\ep}, d, \mu^{\iota})\in \dnr$, where $\ep,\iota\in\{\oo,+,-\}$ and $d\in\csD_{\lambda^{\ep},\mu^{\iota}}$. We now show the existence of $A\in\xid$ satisfying $\eta(A)=(\lambda^{\ep}, d, \mu^{\iota})$ corresponding to the cases in Definition \ref{defkd}.

{\bf Case (0)\&(4).} We first assume $\ep=\iota=\oo$ and consider the double coset $\cW_{\la^\oo} d\cW_{\mu^\oo}=\cW_\la d\cW_\mu$ in $\cW$ which is contained in the double coset $ W_\la dW_\mu$ in $W$. Suppose $A\in\Xi$ defines the double coset $W_\la dW_\mu=W_\la d_AW_\mu$. Then $\la=\ro(A)$, $\mu=\co(A)$, and $d_A$ is the distinguished double coset representative.

If $d_As_r=s_rd_A$ then, by Lemma \ref{map kap} (2),  $a_{n+1,n+1}>0$. Thus, $A\in\hhh$ and one of the two disjoint sets in \eqref{(0)} is a double coset in $\cW$, depending on $d_A\in\cW$ or not.

If $d_As_r\neq s_rd_A$, then $a_{n+1,n+1}=0$. Thus, $A\in\hbh$, $W_\la d_AW_\mu$ has a decomposition as in \eqref{(4)} (and  one of the two disjoint sets in \eqref{(4)} is a double coset, depending on $d_A\in\cW$ or not).
    
The remaining cases, where Case (1) for  $\ep,\iota\in\{+,-\}$ and Cases (2)\&(3) for $\ep =\oo$ or $\iota=\oo$, but not both, we may use \eqref{(1)} or \eqref{(2)} to prove the existence similarly.
  \end{proof}

Similar to the row/column sum vectors $\ro(A),\co(A)$ defined in \eqref{roco}, we now
  define {\it row weights} and {\it column weights} of  elements in $\Xi(n,r)$:
  \begin{equation}\label{rwcw}
 \rw,\cw:\check \Xi(n,r)\longrightarrow\La^\scd(n,r)
 \end{equation}
  by setting $\eta(\cBA)=(\rw(\cBA),d,\cw(\cBA))$, for any $\cBA\in\check \Xi(n,r)$.
  
\begin{lem}\label{crw}Let $\ep,\ep'\in\{+,-\}$.
\begin{enumerate}
\item[(0)] If $\cBA$ is associated with $A\in\hhh$, then $\rw(\cBA),\cw(\cBA)\in\La^\bullet(n+1,r)$.
\item If $\cBA={}^\ep\!A^{\ep'}$  with $A\in\bbb$, then $\rw({}^\ep\!A^{\ep'})\in\La^\circ_\ep(n+1,r)$ and $\cw({}^\ep\!A^{\ep'})\in\La^\circ_{\ep'}(n+1,r)$.
\item If $\cBA={}^\ep\!A^{\ep'}$  with $A\in\bbh$, then $\rw({}^\ep\!A^{\ep'})\in\La^\circ_\ep(n+1,r)$ and $\cw({}^\ep\!A^{\ep'})\in\La^\bullet(n+1,r)$.
\item If $\cBA={}^\ep\!A^{\ep'}$ with $A\in\hbb$, then $\rw({}^\ep\!A^{\ep'})\in\La^\bullet(n+1,r)$ and $\cw({}^\ep\!A^{\ep'})\in\La^\circ_\ep(n+1,r)$.
\item If $\cBA={}^\ep\!A^{\ep'}$  with $A\in\hbh$, then $\rw(\cBA),\cw(\cBA)\in\La^\bullet(n+1,r)$.
\end{enumerate}
\end{lem}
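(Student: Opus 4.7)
The proof is a case-by-case verification that reads off directly from Definition \ref{defkd}, together with the definition \eqref{rwcw} of $\rw$ and $\cw$ via $\eta(\cBA) = (\rw(\cBA), d, \cw(\cBA))$. My plan is simply to inspect each of the five clauses (0)--(4) of Definition \ref{defkd} and note that the first and third coordinates of $\eta(\cBA)$ are precisely the labels claimed to lie in the asserted subsets of $\La^\scd(n,r)$.

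For case (0), when $\cBA = \dot A$ with $A \in \hhh$, Definition \ref{defkd}(0) gives $\eta(\dot A) = (\lambda^\bullet, d, \mu^\bullet)$, so both weights lie in $\La^\bullet(n+1,r)$ as required. For case (1), when $A \in \bbb$, one observes that regardless of whether $d_A \in \cW$ or not, Definition \ref{defkd}(1) assigns to $\pap{A}$ and $\pam{A}$ the first coordinate $\lambda^+$, and to $\mam{A}$ and $\map{A}$ the first coordinate $\lambda^-$; symmetrically, the third coordinate is $\mu^{+}$ or $\mu^{-}$ according to the right-hand sign superscript. Thus $\rw({}^\ep\!A^{\ep'}) \in \La^\circ_\ep(n+1,r)$ and $\cw({}^\ep\!A^{\ep'}) \in \La^\circ_{\ep'}(n+1,r)$.

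For cases (2) and (3), the asymmetry between $\bbh$ and $\hbb$ is built into Definition \ref{defkd}: in (2), the third coordinate is always $\mu^\bullet$ because $\mu_{n+1} \neq 0$ forces $\cW_\mu = \cW_{\mu^\bullet}$ by Corollary \ref{par cor}(3), while the first coordinate inherits the sign $\ep$ from the label; case (3) is strictly symmetric. These yield the claims $\rw \in \La^\circ_\ep$, $\cw \in \La^\bullet$ and vice versa. Finally, for case (4) with $A \in \hbh$, all four possible labels $\pap{A}, \mam{A}, \pam{A}, \map{A}$ receive both first and third coordinate equal to $\lambda^\bullet$ and $\mu^\bullet$, respectively, so $\rw(\cBA), \cw(\cBA) \in \La^\bullet(n+1,r)$.

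No step is a genuine obstacle, since the lemma is really a bookkeeping consequence of how $\eta$ was constructed: the only mild point worth spelling out is why in cases (2)--(4) the ``$\bullet$''-label is forced whenever the corresponding $\la_{n+1}$ or $\mu_{n+1}$ is nonzero. This uses Corollary \ref{par cor}(3) (which identifies $\cW_\la$ with $\cW_{\la^\bullet}$ in that regime) and the fact that Definition \ref{defkd} selects the distinguished representative in a $\cW$-double coset whose parabolic factors are exactly those $\cW_{\la^\bullet}$ or $\cW_{\mu^\bullet}$. Once this is noted, the lemma follows by inspection.
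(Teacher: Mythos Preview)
Your proposal is correct and matches the paper's implicit approach: the paper states Lemma~\ref{crw} without proof, treating it as an immediate bookkeeping consequence of Definition~\ref{defkd} and the definition \eqref{rwcw} of $\rw,\cw$, which is exactly the case-by-case verification you carry out.
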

Note that only in case (0) and (1), the signs ($\pm$ or $\cdot$) on $A$ are inherited by the row/column weights.

Recalled the natural basis $\{\phi_{\al,\be}^d\mid(\al,d,\be)\in\csD(n,r)\}$ for $S_{\bsq}^\scd(n,r)$ given in Corollary \ref{stdbsD}. With the bijection $\eta$ in Lemma \ref{eta}, we may also label the basis element by signed matrix notation: 
$$
\{\phi_{\cBA}\mid\cBA\in\check\Xi(n,r)\}\;\;\text{ where }
\phi_{\cBA}=\phi_{\al\be}^d\;\;\text{ if }\eta(\al,d,\be)=\cBA.
$$
We have also, for $\cBA,\cBB\in\check\Xi(n,r)$,
\begin{equation}\label{phiAB}
\phi_\cBA\phi_\cBB=0\;\;\text{ unless }\cw(\cBA)=\rw(\cBB).
\end{equation}

\begin{rem}
For $A\in\overset\circ\Xi$ such that $A={\frak d}(\la,d_A,\mu)$, denote $\sfm_1=\sfm(\la)$ is the  maximal index $i\leq n+1$ satisfying $\la_i\neq0$. Let $\sfm_2=\sfm(\mu)$. Define $\sfm$ to be the maximal index $j$ satisfying $a_{\sfm_1,\sfm}\neq 0$ if $\sfm_1<n+1$, or $a_{n+1,\sfm}\neq0$ if $\sfm_1=n+1$, and $\sfm\leq n$ (such $\sfm$ exists since $\la_{n+1}\neq0$). We now use the subsets $\cR^{\la^\ep}_i$ defined in \eqref{defrlam} to justify Definition \ref{defkd} above. 

(1)    For $A\in\bbb$ and $\ep\in\{+,-\}$, if $A$ defines
     $\paep{A},\mamep{A}\in\bbb_+\sqcup\bbb_-$, then $\eta(\paep{A})=(\la^+,d_1,\mu^\ep)$ and $\eta(\mamep{A})=(\la^-,d_2,\mu^{-\ep})$
     (see \eqref{+-} for notational convention and Definition \ref{defkd} for the definition of $d_1,d_2$) which define distinct natural basis elements $\phi_{\paep{A}}$ and $\phi_{\mamep{A}}$ according their row or column weights given in \eqref{rwcw}. This distinction can be reflected by the distinction of  the $\sfm_1\times \sfm_2$ subset matrices $\big(\cR_i^{\lambda^+}\cap d_1\cR_j^{\mu^\ep}\big)$ and $\big(\cR_i^{\lambda^-}\cap d_2\cR_j^{\mu^{-\ep}}\big)$ since
      $$\cR_{\sfm_1}^{\lambda^+}\cap d_1\cR_{\sfm}^{\mu^\ep}=I_{\sfm_1,\sfm}\quad\text{ and }\quad
\cR_{\sfm_1}^{\lambda^-}\cap d_2\cR_{\sfm}^{\mu^{-\ep}}=(I_{\sfm_1,\sfm}\setminus\{r\})\cup\{r+1\}$$ are distinct.     However,
     as subsets of $\cW$, the double cosets $\cW_{\la^+} d_1\cW_{\mu^\ep}$ and $\cW_{\la^-}d_2\cW_{\mu^{-\ep}}$ as well as their
     conjugate intersections
     $$\aligned
\cW_{\la^+}\cap d_1\cW_{\mu^\ep}{d_1^{-1}}&=\bigcap_{i,j}\text{Stab}_{\cW}(\cR_i^{\lambda^+}\cap d_1\cR_j^{\mu^\ep}),\\ 
\cW_{\la^-}\cap d_2\cW_{\mu^{-\ep}}{d_2^{-1}}&=\bigcap_{i,j}\text{Stab}_{\cW}(\cR_i^{\lambda^-}\cap d_2\cR_j^{\mu^{-\ep}}),\endaligned
$$
may be the same.

Similar statement holds for $A\in\bbh$ with all $\mu^\ep$ and $\mu^{-\ep}$ replaced by $\mu^\bullet$. This also gives a similar statement for $A\in\hbb$ by symmetry (with $\sfm_1=n+1$ and $\sfm\leq n$).

(2) For $A\in\hbh$, we also have $\paep{A},\mamep{A}\in\hbh_+\sqcup\hbh_-$. In this case, the natural basis elements $\phi_{\paep{A}}$ and $\phi_{\mamep{A}}$ have the same domain and codomain, but are defined by distinct double cosets $\cW_{\la^\bullet} d_1\cW_{\mu^\bullet}$ and $\cW_{\la^\bullet}d_2\cW_{\mu^\bullet}$. Of course, the conjugate intersections
$\cW_{\la^\bullet}\cap (\cW_{\mu^\bullet})^{d_1^{-1}}$ and  $\cW_{\la^\bullet}\cap (\cW_{\mu^\bullet})^{d_2^{-1}}$  are also distinct in this case.
\end{rem}

We are now ready to compute the dimension $\#\check\Xi(n,r)$ of $S^\scd_\bsq(n,r)$.

\begin{thm}\label{geosettingD}Let $\sX=\mathcal{F}_{n,2r}^\jmath $ and $\cG={\mathrm{SO}}_{2r}(q)$. Then there is an algebra isomorphism
$$S^\scd_q(n,r):=S_\bsq^\scd(n,r)|_{\bsq=q}\cong \fkF_{\cG}(\sX\times \sX)$$
sending $\phi_\cBA|_{\bsq=q}$ to $f_{\csO(\cBA)}$.
Moreover, the following dimension formula holds
    \begin{equation}
    \label{geosettingD-2}
        \#\xid=\binom{2n^2+2n+r}{r}+\binom{2n^2+2n+r-1}{r}.
    \end{equation}
\end{thm}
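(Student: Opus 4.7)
The plan is to combine the algebraic identification $S^\scd_\bsq(n,r)\cong S^\kappa_\bsq(n,r)$ of Proposition~\ref{kappa-D}, the geometric realisation of $S^\scb_{q,1}(n,r)$ of Theorem~\ref{geosettingB}, and the $\cG(q)$-orbit decomposition \eqref{SOorbit}. Specialising Proposition~\ref{kappa-D} at $\bsq=q$ gives
$$S^\scd_q(n,r)\ \cong\ \End_{\cH_q}\!\Big(\bigoplus_{\la\in\La(n+1,r)} x_\la H_{q,1}|_{\cH_q}\Big).$$
By Theorem~\ref{geosettingB} and the underlying Iwahori presentation, $\bigoplus_\la x_\la H_{q,1}\cong \mathbb Z\sX$ as $H_{q,1}$-modules, hence as $\cH_q$-modules after restriction. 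Independently, Lemma~\ref{scott}(2) applied to the $\cG(q)$-set $\sX$ gives $\End_{\mathbb Z\cG(q)}(\mathbb Z\sX)^{\op}\cong\fkF_{\cG(q)}(\sX\times\sX)$. Using the $\cG(q)$-orbit decomposition \eqref{SOorb}, $\mathbb Z\sX\cong \bigoplus_{\al\in\La^\scd(n,r)}\mathbb Z\csO_\al$ as $\cG(q)$-modules, and a type-$D$ Iwahori computation identifies $\mathbb Z\csO_\al\cong \cx_\al\cH_q$ as $\cH_q$-modules, yielding $\End_{\cH_q}(\mathbb Z\sX)\cong\End_{\mathbb Z\cG(q)}(\mathbb Z\sX)$. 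Chaining these isomorphisms gives $S^\scd_q(n,r)\cong \fkF_{\cG(q)}(\sX\times\sX)$. The basis correspondence $\phi_\cBA\mapsto f_{\csO(\cBA)}$ is then read off from the labelling in Definitions~\ref{split2} and \ref{csO}, together with the bijection $\check\fkd=\eta^{-1}$ of Lemma~\ref{eta}.

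\emph{Dimension count.} First, using the decomposition in \eqref{xid} and \eqref{corner}, $\#\hhhd=\#\hhh$ and each $A\in\overset{\circ}\Xi$ yields exactly two signed labels (both lie in $\overset{\circ}\Xi_+$ if $\sgnura=+$, or both lie in $\overset{\circ}\Xi_-$ if $\sgnura=-$), so
$$\#\check\Xi(n,r)\ =\ \#\hhh+2\#\overset{\circ}\Xi\ =\ \#\Xi_{N,2r}+\#\overset{\circ}\Xi.$$
Next, exploiting the $\theta$-symmetry $a_{i,j}=a_{N+1-i,N+1-j}$ of matrices in $\Xi_{N,2r}$: the central entry $a_{n+1,n+1}$ is $\theta$-fixed, and the remaining $N^2-1=4n^2+4n$ entries pair up into $2n^2+2n$ $\theta$-orbits with independent pair values $c_1,\dots,c_{2n^2+2n}\in\mathbb N$. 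The constraint $\sum_{i,j}a_{i,j}=2r$ forces $a_{n+1,n+1}=2k$ with $k\in\mathbb N$ and
$$k+c_1+\cdots+c_{2n^2+2n}=r,$$
whose number of non-negative integer solutions is $\binom{2n^2+2n+r}{r}=\#\Xi_{N,2r}$. Specialising to $a_{n+1,n+1}=0$ (i.e.\ $k=0$) gives $\#\overset{\circ}\Xi=\binom{2n^2+2n+r-1}{r}$, and the sum produces \eqref{geosettingD-2}.

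\emph{Main obstacle.} The only step requiring nontrivial care is the ``type $D$ Iwahori'' identification $\mathbb Z\csO_\al\cong\cx_\al\cH_q$, guaranteeing that $\End_{\cH_q}(\mathbb Z\sX)=\End_{\mathbb Z\cG(q)}(\mathbb Z\sX)$ rather than just one containing the other. A cleaner alternative that sidesteps this is to \emph{define} the map $\phi_\cBA\mapsto f_{\csO(\cBA)}$ directly on bases and verify multiplicativity by matching the double-coset structure constants of $\phi_\cBA\phi_{\cBB}$ against the convolution counts in \eqref{f_Of_O'}; the required compatibility then follows from Lemma~\ref{MforB} and the parametrisation in Lemma~\ref{eta}, after which the dimension count above confirms that this linear bijection is indeed an algebra isomorphism.
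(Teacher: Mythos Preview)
Your proposal is correct and follows essentially the same route as the paper. For the isomorphism, the paper is even terser than you: it simply invokes Lemma~\ref{scott}(2) to identify $\fkF_{\cG(q)}(\sX\times\sX)$ with $\End_{\cG(q)}(\mathbb Z\sX)^{\op}$ and leaves the link to $S^\scd_q(n,r)$ implicit in the preceding development; your more explicit chain through Proposition~\ref{kappa-D} and the $\cG(q)$-orbit decomposition \eqref{SOorb} is a reasonable elaboration, and the ``type $D$ Iwahori'' step you flag as an obstacle is standard BN-pair theory for $\SO_{2r}(q)$ (cf.\ \cite{Iw} and \cite[Th.~1.7.8]{Ge}) that the paper likewise takes for granted. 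For the dimension formula, both arguments rest on the same key identity $\#\check\Xi(n,r)=\#\Xi_{N,2r}+\#\overset{\circ}\Xi$; your direct parameter count using the $\theta$-symmetry is cleaner than the paper's summation $\#\Xi_{N,2r}=\sum_{l=0}^r\binom{2n^2+2n+r-l-1}{2n^2+2n-1}$, but both yield the same binomials.
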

\begin{proof}By Lemma \ref{scott}, the convolution algebra $\fkF_{{\mathrm{SO}}_{2r}(q)}(\sX\times \sX)$ is isomorphic to the endomorphism algebra $\End_{\cG(q)}(\mathbb Z\sX)^{\text{\rm op}}$ of the permutation $\cG(q)$-module $\mathbb Z\sX$.

       We now calculate $\#\xid$. By the definition of $\xid$ in \eqref{xid}, we have, for $N=2n+1$, 
    $$\#\xid-\#\Xi_{N,2r}=
    \#\{A\in \Xi_{N,2r}\mid a_{n+1,n+1}=0\}=\binom{2n^2+2n+r-1}{r}=\binom{n(N+1)+r-1}{r},$$
(see  Lemma \ref{N^2}).   Moreover, $$\begin{aligned}
        \#\Xi_{N,2r}&=\#\Big\{A=(a_{ij})\in\Xi_{N,2r}\;\Big|\;\sum\limits_{i\leq n:j}a_{ij}+\sum\limits_{j\leq n}a_{n+1,j}=r-\frac{a_{n+1,n+1}}{2}\Big\}\\
        &=\sum \limits_{l=0}^r \binom{2n^2+2n+r-l-1}{2n^2+2n-1}=\binom{2n^2+2n+r}{r}.
    \end{aligned}$$
    Combining two equations gives $\#\xid=\binom{2n^2+2n+r-1}{r}+\binom{2n^2+2n+r}{r}.$
   \end{proof}


\begin{rem}\label{error}
(1). 
The theorem shows that the $\sA$-algebra $S_\bsq^\scd(n,r)$ is the ``quantumization'' (in the sense of \cite[p.17]{DDPW}) of the algebras $\{\fkF_{{\mathrm{SO}}_{2r}(q)}(\sX\times \sX)\}_{q\in\mathcal P}$ which is the same as the algebra defined in \cite[(5)]{FL}.
It should be pointed out that the dimension formula $\#\Xi_{\scd}$ given in \cite[Lem.~4.2.1]{FL} is incorrect as the definition of the index set $\Xi_{\scd}=\Xi^+\sqcup\Xi^0\sqcup\Xi^-$ mistakenly used $\Xi^0$ instead of $\hhh$.
In fact, we have $\Xi^0=\hhh\sqcup\hbh$. Consequently, 
\begin{equation}\label{FL}\#\xid=\#\Xi_{\scd}+\#\hbh.\end{equation}
Indeed, this can be seen from $\overset\circ\Xi=\hbh\sqcup(\overset{\circ\!\circ\text{-}}\Xi\cup\overset{\text{-}\circ\!\circ}\Xi)$, where $\overset{\circ\!\circ\text{-}}\Xi=\{A\in\Xi\mid \la_{n+1}=0=a_{n+1,n+1}\}$. Thus,
$\#\hbh=\#\overset\circ\Xi-2\overset{\circ\!\circ\text{-}}\Xi+\#\bbb=\binom{2n^2+2n+r-1}{r}-2\binom{2n^2+n+r-1}{r}+\binom{2n^2+r-1}{r}$, giving \eqref{FL}.

(2). The anonymous referee pointed out to us that 
by using coordinate algebra approach developed in~\cite{LNX}, 
the dimension formula (\ref{geosettingD-2})  of $S^\scd_\bsq(n,r)$ was also  obtained by Ziqing Xiang in an  unpublished manuscript.
\end{rem}

\section{Multiplication formulas in the $(\bsq,1)$-Schur algebra {\rm$S_{\bsq,1}^\scb(n,r)$}}

We first follow the idea in \cite{BLM} or \cite{BKLW} to derive the multiplication formulas in $S_{\bsq,1}^\scb(n,r)$.
Due to the unequal parameter nature, we will see the differences between the structure constants here and those occurring in the (equal parameter) multiplication formulas for $S^\jmath$ in \cite{BKLW}.

Let $\Gr(1,2r)=\Gr_1(\mathbb F_q^{2r})$ be the Grassmannian of isotropic lines in $\mathbb F_q^{2r}$:
$$\Gr(1,2r)=\{\lr{x}\subseteq \mathbb{F}_q^{2r}\mid x\neq0, \lr{x,x}_\sfJ=0\}.$$
 The following result is known. Part (1) is stated in \cite[Lem.~3.1.2]{FL}; while part (2) is \cite[Lem.~3.1.3]{FL}. For completeness and later use in Corollary \ref{claim}, we include a proof for part (1).

\begin{lem}\label{grass}
    (1) The cardinality of the set $ \Gr(1,2r)$ of isotropic lines in $\mathbb{F}_q^{2r}$ is 
    $$\# \Gr(1,2r)=\frac{\left(q^{r}-1\right)\left(q^{r-1}+1\right)}{q-1}=\frac{q^{2r-1}-1}{q-1}+q^{r-1}.$$ 

    (2) Let 
$F=({{F}_i})_{1\leq i\leq 5}\in \mathcal F_{2,r}^\jmath$ be a $2$-step isotropic flag with $\text{\bf dim}(\hat{F})=(a_1,a_2,a_3,a_4,a_5)$ and, for $i=3,4$, let
     $Z_i=\{L\subset {F}_i\mid L\in\Gr(1,2r), L\not\subseteq {F}_{i-1} \}$. Then
    $$({\romannumeral 1}) \ \#Z_3=q^{a_1+a_2}\Big(\frac{q^{a_3-1}-1}{q-1}+q^{\frac{a_3}{2}-1}\Big), \qquad
    ({\romannumeral 2}) \ \#Z_4=q^{a_1+a_2+a_3-1}\frac{q^{a_4}-1}{q-1}.$$
\end{lem}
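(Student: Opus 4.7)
I will prove parts (1) and (2) in sequence, with part (1) feeding directly into part (2)(i). For part (1), in the standard basis the form reads $\langle x, x\rangle_\sfJ = 2\sum_{i=1}^r c_i c_{2r+1-i}$, so in odd characteristic isotropy is equivalent to $\sum_{i=1}^r c_i c_{2r+1-i} = 0$, the standard split hyperbolic quadratic form of Witt index $r$. I would count non-zero isotropic vectors by induction on $r$: peel off the pair $(c_1, c_{2r})$ and separate the case $c_1 = 0$ (which reduces to the rank $r-1$ case on the remaining variables, including the non-zero condition when the rest are also zero) from $c_1 \ne 0$ (in which case $c_{2r}$ is uniquely determined once $c_1$ and the other $2r-2$ variables are chosen freely). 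The recursion yields $(q^r-1)(q^{r-1}+1)$ non-zero isotropic vectors, and dividing by $q-1$ gives the first expression; the identity $(q^r-1)(q^{r-1}+1)/(q-1) = (q^{2r-1}-1)/(q-1) + q^{r-1}$ is elementary.

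\textbf{Part (2)(i).} Since $N = 5$ we have $F_3 = F_2^\perp$, so $F_3/F_2$ inherits a non-degenerate split symmetric form of dimension $a_3 = 2r-2a_1-2a_2$ (forced even, of Witt index $a_3/2$). A line $L \subset F_3$ lies outside $F_2$ iff $\bar L := (L+F_2)/F_2$ is a non-zero line in $F_3/F_2$. Because $F_2 \subseteq F_2^\perp = F_3$, for any lift $x$ of $\bar x$ and any $y \in F_2$ one has $\langle x+y, x+y\rangle_\sfJ = \langle x, x\rangle_\sfJ$, so isotropy descends and every isotropic $\bar L$ lifts to isotropic lines. Applying part (1) with $r$ replaced by $a_3/2$ gives $(q^{a_3/2}-1)(q^{a_3/2-1}+1)/(q-1)$ non-zero isotropic lines in $F_3/F_2$, each lifting to exactly $q^{a_1+a_2}$ lines in $F_3$ not contained in $F_2$ (namely, the lines in the $(a_1+a_2+1)$-dimensional preimage $F_2 + \tilde L$ which are not contained in $F_2$). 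Expanding $(q^{a_3/2}-1)(q^{a_3/2-1}+1) = q^{a_3-1} - 1 + (q-1)q^{a_3/2-1}$ yields the stated formula.

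\textbf{Part (2)(ii) and main obstacle.} Here $F_4 = F_1^\perp$ but $F_4/F_3$ is \emph{not} isotropic; instead the form descends to a perfect pairing $F_4/F_3 \times F_2/F_1 \to \BF_q$, and I would fiber the set of isotropic $x \in F_4 \setminus F_3$ over its image in $(F_4/F_3)\setminus \{0\}$. Fix any complement $C$ of $F_2$ in $F_3$; since $F_2 \subseteq F_3 = F_2^\perp$, for $z = z_1 + z_2 \in F_2 \oplus C$ one has $\langle z, z\rangle_\sfJ = \langle z_2, z_2\rangle_\sfJ$, and for any fixed lift $x$ of $\bar x$ the equation $\langle x+z, x+z\rangle_\sfJ = 0$ becomes
\begin{equation*}
2\langle x, z_1\rangle_\sfJ \;=\; -\langle x, x\rangle_\sfJ - 2\langle x, z_2\rangle_\sfJ - \langle z_2, z_2\rangle_\sfJ.
\end{equation*}
The left side descends to the linear functional $\ell_{\bar x}: F_2/F_1 \to \BF_q$ coming from the perfect pairing, and is non-zero precisely because $\bar x \ne 0$. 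So for each $z_2 \in C$ the equation has $q^{a_2-1}$ solutions in $\bar z_1$, hence $q^{a_1+a_2-1}$ solutions in $z_1 \in F_2$. Multiplying by $|C| = q^{a_3}$, by the $q^{a_2}-1$ choices of $\bar x$, and dividing by $q-1$ for lines, gives $\#Z_4 = q^{a_1+a_2+a_3-1}(q^{a_2}-1)/(q-1)$, which matches the stated formula since $a_4 = a_2$ (forced by $\dim F_i + \dim F_{5-i} = 2r$). The main technical subtlety is the non-vanishing of $\ell_{\bar x}$ for every $\bar x \ne 0$, which rests on the perfect pairing $F_4/F_3 \times F_2/F_1 \to \BF_q$; once this is in hand, the quadratic part in $z_2$ only shifts the right-hand side without affecting the count of $z_1$-solutions, so the linear count proceeds uniformly.
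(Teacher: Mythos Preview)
Your proof is correct. For part (1) you proceed by induction on $r$, peeling off the pair $(c_1,c_{2r})$; the paper instead performs a single direct case split on whether the first $r$ coordinates $\mathbf a=(x_1,\dots,x_r)$ all vanish: when $\mathbf a=0$ one gets $(q^r-1)/(q-1)$ lines from the non-zero $\mathbf b=(x_{2r},\dots,x_{r+1})$, and when $\mathbf a\neq 0$ the condition forces $\mathbf b^t\in\ker\mathbf a$, giving $\frac{q^r-1}{q-1}\cdot q^{r-1}$ further lines. The paper's split is a one-shot count with no recursion, while yours is a clean induction; both reach the same closed form with comparable effort.

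For part (2) the paper gives no argument at all, citing \cite{FL} instead, so your self-contained treatment actually adds content. Your approach to (2)(i)---passing to the quotient $F_3/F_2=F_2^\perp/F_2$, invoking (1) there, and lifting each isotropic line by the $q^{a_1+a_2}$ lines in its preimage not contained in $F_2$---is the natural one. Your argument for (2)(ii) via the perfect pairing $F_4/F_3\times F_2/F_1\to\mathbb F_q$ is also sound: the non-degeneracy of this pairing (equivalently, the non-vanishing of $\ell_{\bar x}$ for $\bar x\neq 0$) follows immediately from $F_3=F_2^\perp$, so an $x\in F_4\setminus F_3$ cannot pair trivially with all of $F_2$, and from there the affine-hyperplane count goes through uniformly in $z_2$ as you note. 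The one minor cosmetic point is that you write ``$q^{a_2}-1$ choices of $\bar x$'' where the natural count is $q^{a_4}-1$; you correctly observe $a_4=a_2$, but it would read more smoothly to use $a_4$ throughout and invoke the equality only at the end.
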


\begin{proof}
We prove (1) (and (2) is given in loc. cit.). For $x=x_{1} e_{1}+\cdots+x_re_r+x_{r+1}e_{r+1}+\cdots+x_{2 r} e_{2 r} \in \mathbb{F}_{q}^{2 r}-\{0\}$, let
   $${\textbf{a}}={\bf a}(x)=(x_1, x_2, \cdots, x_r),\;\; {\textbf{b}}={\bf b}(x)=(x_{2r}, x_{2r-1}, \cdots, x_{r+1})\in \mathbb F_q^r.$$
 Then, $\mathbb F_q$ has odd characteristic implies 
 $$\aligned
 \lr{x,x}_\sfJ=0&\iff {\bf a}{\bf b}^t=0\\
 &\iff\text{either }{\bf a}=0,\text{ or }{\bf a}\neq0,\text{ but }{\textbf{b}}^{t}\in \text{Ker }{\textbf{a}}.\endaligned
 $$
Thus,
$$\Gr(1,2r)=\{\lr{x}\subseteq \mathbb{F}_q^{2r}\mid {\bf a}=0, {\bf b}\in\mathbb F_q^r-\{0\}\}
\sqcup \{\lr{x}\subseteq \mathbb{F}_q^{2r}\mid {\bf a}\in\mathbb F_q^r-\{0\}, {\bf b}^t\in\text{Ker}({\bf a})\}.$$
Clearly, we have
$$\#\{\lr{x}\subseteq \mathbb{F}_q^{2r}\mid {\bf a}=0, {\bf b}\in\mathbb F_q^r-\{0\}\}=\frac{q^r-1}{q-1}.$$
On the other hand, for ${\bf a}\neq0$, since $ \text{Ker }{\textbf{a}}\cong \mathbb{F}_{q}^{r-1}$, it follows that
$$\#\{\lr{x}\subseteq \mathbb{F}_q^{2r}\mid {\bf a}\in\mathbb F_q^r-\{0\}, {\bf b}^t\in\text{Ker}({\bf a})\}=
\frac{q^r-1}{q-1} q^{r-1}.$$
 Hence, $$\# \Gr(1,2r)=\frac{q^{r}-1}{q-1}+\frac{q^{r}-1}{q-1}\cdot q^{r-1}
=\frac{\left(q^{r}-1\right)\left(q^{r-1}+1\right)}{q-1},$$
as desired.
\end{proof}

Let $e_A=f_{\sO_A}$ be the characteristic function of the ${\mathrm{O}_{2r}}(q)$-orbit corresponding to $A\in \Xi_{N, 2r}$. Therefore, $\{e_A|A\in \Xi_{N, 2r}\}$ forms a basis of $\sj$. For convenience, set $e_A=0,\text{ if }A\notin \Xi_{N,2r}.$



Now we formulate some multiplication formulas in $S^\scb_{\bsq,1}(n,r)$. 
These formulas are similar to (but different from) those in \cite[Lem.~3.2]{BKLW}, ({cf. \cite[4.3.2(c)]{FL}, \cite[A.5.16]{LL}}).
Also, the proof, motivated from that of \cite[Prop.~4.3.2]{FL}, is much shorter than the one in \cite{BKLW}. Recall the matrices $\apl$ and $\apu$ associated with $A$ defined in \eqref{hp}. 

\begin{thm}\label{sjcon} Maintain the notation above and suppose that $h\in[1,n]$, $N=2n+1$, and $A=(a_{i,j})\in \Xi_{N, 2r}$.
\begin{itemize}
\item[(1)] If $B=E_{h, h+1}^{\theta}+\wla\in \Xi_{N, 2r}$, for some $\la\in\La(n+1,r)$, and $\operatorname{ro}(A)=\operatorname{co}(B)$, then we have in $S_{\bsq,1}^\scb(n,r)$
$$ e_B * e_A=\sum_{p\in[1, N],a_{h+1,p}>0} \bsq^{\sum_{j > p} a_{h,j}} \ggi{a_{h,p}+1}e_{\apl}.$$
\item[(2)] If $C=E_{h+1,h}^{\theta}+\wmu\in \Xi_{N, 2r}$, for some $\mu\in\La(n+1,r)$, and $\operatorname{ro}(A)=\operatorname{co}(C)$, then, for $\de=1-\de_{a_{n,n+1},0}$, we have in $S_{\bsq,1}^\scb(n,r)$
\begin{equation}\nonumber
    e_C* e_A=\begin{cases}
    \sum \limits_{p\in[1,N],a_{h,p}>0} \bsq^{\sum_{j<p} a_{h+1,j}} \ggi{a_{h+1,p}+1} e_{\apu},&\text{ if }h\neq n;\\
 \sum \limits_{p\in[1,N], p\neq n+1\atop a_{n,p}>0} \bsq^{\sum\limits_{j<p} a_{n+1,j}} \ggi{a_{n+1,p}+1} e_{\apu}    +\de\bsq^{\sum\limits_{j<n+1} a_{n+1,j}}\big(\!\ggi{a_{n+1,n+1}+1}+\bsq^{\frac{a_{n+1,n+1}}2}\big) e_{_n\!A_{\overline{n+1}}},&\text{ if }h=n.
    \end{cases}
\end{equation}
\end{itemize}
\end{thm}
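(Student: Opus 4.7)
The plan is to exploit the geometric realization of Theorem~\ref{geosettingB}, under which each basis element $e_A$ becomes the characteristic function $f_{\sO_A}$ of the $G(q)$-orbit $\sO_A\subseteq\sX\times\sX$; the product $e_B*e_A$ is then the convolution
$$
(e_B*e_A)(F,F')=\#\{E\in\sX\mid (F,E)\in\sO_B,\ (E,F')\in\sO_A\}.
$$
First I would use Corollary~\ref{Cormulti} to translate the two orbit conditions: the hypothesis on $B$ forces $(F,E)\in\sO_B$ to be equivalent to $E\overset 1\subset_h F$, while the one on $C$ forces $(F,E)\in\sO_C$ to be equivalent to $F\overset 1\subset_h E$; the same corollary (second assertion) pins down which orbit $\sO_{A'}$ the pair $(F,F')$ must lie in for a given column $p$, namely $A'=\apl$ in Case~(1) and $A'=\apu$ in Case~(2). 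Thus the coefficient of $e_{A'}$ reduces to counting intermediate $E$'s for a fixed pair $(F,F')\in\sO_{A'}$.

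For Case~(1), I would fix a basis adapted to $(F,F')$ via Lemma~\ref{MforB}(2) and observe that the choices of $E$ with $E\overset 1\subset_h F$ and $\fkm(E,F')=A$ biject with the hyperplanes of $Q=F_h/F_{h-1}$ whose induced filtration $Q_l=(F_h\cap F'_l+F_{h-1})/F_{h-1}$ has the required dimension drops. Since $\dim Q_l/Q_{l-1}=a'_{h,l}=a_{h,l}+\delta_{l,p}$, these are exactly the hyperplanes containing $Q_{p-1}$ but not $Q_p$, and the $\bsq$-arithmetic identity $\ggi{a+b}-\ggi{a}=\bsq^a\ggi{b}$ yields the structure constant $\bsq^{\sum_{j>p}a_{h,j}}\ggi{a_{h,p}+1}$. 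For Case~(2) with $h<n$, the dual argument applied to lines $L=E_h/F_h$ in the filtered quotient $F_{h+1}/F_h$ produces $\bsq^{\sum_{j<p}a_{h+1,j}}\ggi{a_{h+1,p}+1}$; here the isotropy of $E_h$ is automatic since $E_h\subseteq F_{h+1}$ and $F_{h+1}$ is itself isotropic.

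The hard part is Case~(2) with $h=n$, where $E_n=F_n+L$ must be isotropic, so $L$ must project to an \emph{isotropic} line in the non-degenerate quadratic space $V=F_n^\perp/F_n$. For $p\le n$ the subspace $U_p=(F_{n+1}\cap F'_p+F_n)/F_n$ is totally isotropic in $V$ (because $F'_p$ is isotropic), so every candidate line is isotropic and the earlier count from the $h<n$ case goes through unchanged. The key structural observation needed for the remaining sub-cases is that $0\subset U_n\subset U_{n+1}\subset V$ is a $2$-step isotropic flag inside $V$, i.e., $U_{n+1}=U_n^\perp$, and more generally $U_l^\perp=U_{N-l}$ by the palindromic $\theta$-symmetry of $\apu$. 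For $p=n+1$, Lemma~\ref{grass}(2)(i) applied with $a_3=a'_{n+1,n+1}=a_{n+1,n+1}+2$ delivers $\bsq^{\dim U_n}\bigl(\ggi{a_{n+1,n+1}+1}+\bsq^{a_{n+1,n+1}/2}\bigr)$ together with $\dim U_n=\sum_{j<n+1}a_{n+1,j}$. For $p>n+1$, Lemma~\ref{grass}(2)(ii) applied to the flag $0\subset U_{N-p}\subset U_{N+1-p}\subset U_{p-1}\subset U_p\subset V$ gives $\bsq^{\dim U_{p-1}-1}\ggi{a'_{n+1,p}}$, and the identity $\sum_{l<p}a'_{n+1,l}-1=\sum_{l<p}a_{n+1,l}$ (the $-1$ absorbing the extra increment at $l=N+1-p$) reproduces $\bsq^{\sum_{j<p}a_{n+1,j}}\ggi{a_{n+1,p}+1}$. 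Assembling the contributions and noting that $\de=1-\delta_{a_{n,n+1},0}$ encodes the nonnegativity of $\apu$ at the middle column completes the proof. I expect the most delicate bookkeeping to be the identification $U_l^\perp=U_{N-l}$ via the $\theta$-symmetry, after which Lemma~\ref{grass} does the actual counting.
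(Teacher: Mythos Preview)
Your proposal is correct and follows essentially the same route as the paper's proof: the geometric realization via Theorem~\ref{geosettingB}, the reduction to the sets $Z_{C,A,A'}$ via Corollary~\ref{Cormulti}, line/hyperplane counting in the filtered quotient for the ``easy'' cases, and Lemma~\ref{grass}(2) applied to the induced form on $F_{n+1}/F_n$ for the $h=n$, $p\ge n+1$ subcases. The paper only writes out part~(2) and says part~(1) is ``similar to the first case in~(2)''; your hyperplane count in $F_h/F_{h-1}$ is exactly the natural dual realization of that similarity. Your bookkeeping for $p>n+1$ (the identity $\sum_{l<p}a'_{n+1,l}-1=\sum_{l<p}a_{n+1,l}$, with the $-1$ absorbing the increment at $l=N+1-p$) is in fact cleaner than the paper's own intermediate display, which silently compensates the same shift. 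The orthogonality $U_l^\perp=U_{N-l}$ you flag as delicate is Lemma~\ref{N^2} in the paper.
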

\begin{proof}
     We only prove (2). The proof consists of two cases: (1) $h<n$ or $h=n, p\leq n$ and (2) $h=n, p>n$. The first case is similar to the original type $A$ proof given in \cite[Lem.~3.2]{BLM}, while the second case is parallel to that of \cite[Lem.~3.2]{BKLW}. Note that the proof of (1) is similar to the first case in (2). 
     
     By the isomorphism $e_A|_{\bsq=q}\mapsto f_A=f_{\sO_A}$ in Theorem \ref{geosettingB}, it suffices to compute the coefficients in the multiplication formula 
     \begin{equation}\label{CA}f_C*f_A=\sum_{A'}\#Z_{C,A,A'}f_{A'}\end{equation}
      in \eqref{f_Of_O'}, where,
 for $(F,F')\in\sO_{A'}$,  
 \begin{equation}\label{CAA'}
 Z_{C,A,A'}=\{ E\in\sF_{n,r}^\jmath\mid (F,E)\in\sO_C,(E,F')\in\sO_A\}.
 \end{equation}   
   By Corollary~\ref{Cormulti}(2), if $\#Z_{C,A,A^\prime}\neq 0$ then $A^\prime= \apu=(a'_{i,j})$ for some $p\in[1,N]$. 
We now compute the numbers $\#Z_{C,A,A^\prime}.$
   
 Let $Z_{h,p}=Z_{C,A,A^\prime}$.  Suppose $\fkm(F, F^\prime)\in\sO_{\apu}$ and 
  let $Z_h$ be the set of all isotropic subspaces $S$ such that $F_h\stackrel{1}{\subset }S\subseteq F_{h+1}$. 
  Note that if $h<n$ then $F_{h+1}$ is isotropic. Thus, every subspace of $F_{h+1}$ is isotropic.
 Hence, $h<n$ implies $\#Z_h=\#\{\text{1-dimensional subspaces of }F_{h+1}/F_h\}$.
  
  Each $S\in Z_h$ defines an $n$-step isotropic flag $E\in\sF^\jmath_{n,r}$ such that $E_h=S$ and $F\overset{1}\subset_h E$. By Lemma~\ref{Cormulti}, we have $\fkm(F, E)=C$, $\fkm(E, F^\prime)=A$
  and $S$ belongs to the set
  \begin{equation*}
Z'_{h,p}=\{S\in Z_h\mid  F_h\cap F_j^{'}=S\cap F_j^{'} \text{ if }j<p \text{ and } F_h\cap F_j^{'}\neq S\cap F_j^{'} \text{ if }j\geq p\}.
  \end{equation*}
In particular, we have $\#Z_{h,p}=\#Z'_{h,p}$.

Note that if we write $S=F_h+\lr{v}$, then $F_h\cap F_{p-1}^{'}=S\cap F_{p-1}^{'}\iff v\not\in S\cap F_{p-1}^{'}$ which is equivalent to $S\not\subseteq F_h+F_{h+1}\cap F'_{p-1}$. Similarly, $F_h\cap F_p^{'}\neq S\cap F_p^{'}\iff
v\in  S\cap F_p^{'}\iff S\subseteq F_h+F_{h+1}\cap F'_{p}$. Thus, with the notation $F_{i,j}:=F_{i-1}+F_i\cap F'_j$ used in the $n(N+1)$-step isotropic flag \eqref{Fij}, we have
\begin{equation}\label{Z'}
Z'_{h,p}=\{S\in Z_h\mid S\subseteq F_{h+1,p}, S\not\subseteq F_{h+1,p-1}\}.
\end{equation}

   If $h<n$, then $A$ and $A'$ differ only at $a'_{h,p}=a_{h,p}-1$, $a'_{h+1,p}=a_{h+1,p}+1$. Thus,
    $$\begin{aligned}
   \#Z_{h,p}&=\#\{S\in Z_h \mid S\subseteq F_{h+1,p} \}-\#\{S\mid  S\subseteq F_{h+1,p-1}\}\\
   &=(q-1)^{-1}(q^{\dim(F_{h}+F_{h+1}\cap F_p^\prime/F_h)}-q^{\dim(F_{h}+F_{h+1}\cap F_{p-1}^\prime/F_h)})\\
   &=(q-1)^{-1}(q^{\sum_{j\leq p}a^\prime_{h+1,j}}-q^{\sum_{j\leq p-1}a^\prime_{h+1,j}})=q^{\sum_{j<p}a_{h+1,j}}\frac{q^{a_{h+1,p}+1}-1}{q-1}.
   \end{aligned}$$
 If $h=n$, then $S\in Z_n$ may not be isotropic. However, $p\leq n$ implies $F_{n+1,p}=
  F_n+F_{n}^\perp\cap F'_p$ is isotropic. Thus, every $S\in Z'_{n,p}$ is isotropic. Hence, the counting formula above continues to hold for $h=n$ and $p\leq n$.


 It remains to prove the case for $h=n$ and $p\geq n+1$. We extract the central section (the $(n+1)$th row) from \eqref{Fij}:
 $$F_n\subseteq F_{n+1,1}\subseteq F_{n+1,2}\subseteq \cdots\subseteq F_{n+1,n}\subseteq F_{n+1,n+1}\subseteq \cdots\subseteq F_{n+1,N}=F_{n+1}.$$
By Lemma \ref{N^2}, we have $F_{n+1,i}^\perp=F_{n+1,N-i}$, for all $i\in[1,n]$. Since $F_n^\perp=F_{n+1}$ and, for $W:=F_{n+1}/F_n$, $\lr{\quad\;}_\sfJ|_W$ is equivalent to $\lr{\quad\;}_{\sfJ_{2r'}}$ by \cite[Lem.~3.1.1]{FL}, 
where $2r'=\dim W$, the above filtration induces an $n$-step isotropic flag in $W$:
$$0\subseteq W_1\subseteq W_2\subseteq\cdots\subseteq W_{N-1}\subseteq W_N =W\cong\mathbb F_q^{2r'},$$
 where $W_i=\frac{F_n+F_{n+1}\cap F_i^\prime}{F_n}$ for $i\in[1,N]$.
 
 Now we finish the computation with simple applications of Lemma ~\ref{grass}(2).

 If $p=n+1$, then consider the $2$-step isotropic flag
    $$0\overset{a_1}\subseteq 0\overset{a_2}\subseteq W_n\overset{a_3}\subseteq W_{n+1}\overset{a_4}\subseteq W\overset{a_5}\subseteq W$$ in $W\cong\mathcal F_{2,r'}^\jmath$.
    Clearly, since $A'=A-E^\th_{n,n+1}+E^\th_{n+1,n+1}$, we have $a_1=0$,
   $$a_2=\text{dim }W_n=\sum_{j< n+1}a_{n+1,j}^\prime=\sum_{j< n+1}a_{n+1,j},\; \text{ and }\; a_3=\text{dim }(W_{n+1}/W_n)=a_{n+1,n+1}^\prime=a_{n+1,n+1}+2.$$ 
    Thus, \eqref{Z'} becomes in this case
    $Z'_{n,n+1}=\{S\in Z_n\mid S\subseteq F_{n+1,n+1}, S\not\subseteq F_{n+1,n}\}$. Hence,
  $\#Z'_{n,n+1}=\#Z_3$ where $Z_3=\{S\subset W_{n+1}\mid S\in\Gr(1,2r'), S\not\subseteq W_{n}\}$. By Lemma ~\ref{grass}(2)({\romannumeral 1}), we obtain
  $$\#Z_{n,n+1}=q^{\sum_{j<n+1}a_{n+1,j}}\Big(\frac{q^{a_{n+1,n+1}+1}-1}{q-1}+q^{\frac{a_{n+1,n+1}}2}\Big),$$ 
 as desired. 
 
  Finally, 
    suppose $p>n+1$. The computation is similar.  Consider the $2$-step isotropic flag in $W$:
    $$0\overset{a_1}\subseteq  W_{N-p}
    \overset{a_2}\subseteq W_{N-p+1}\overset{a_3}\subseteq W_{p-1}\overset{a_4}\subseteq W_{p}\overset{a_5}\subseteq W_{N}\cong  \mathbb{F}_q^{2r'}.$$
 Here, with $A'=A-E^\th_{n,p}+E^\th_{n+1,p}$ ($p>n+1$), we have
 $$a_1+a_2+a_3=\text{dim }W_{p-1}=\sum_{j< p}a_{n+1,j}^\prime=\sum_{j<p}a_{n+1,j}\; \text{ and }\; a_4= \text{dim }(W_{p}/W_{p-1})=a_{n+1,p}^\prime=a_{n+1,p}+1.$$
    Thus,  by Lemma~\ref{grass}(2)({\romannumeral 2}),
     $\#Z'_{n,p}=\#Z_4$, where $Z_4=\{S\subset W_{p}\mid S\in\Gr(1,2r'),S\not\subseteq W_{p-1}\}$. Hence,
     for $p>n+1$,
      $$\#Z_{n,p}=q^{\sum_{j<p}a_{n+1,j}}\frac{q^{a_{n+1,p}+1}-1}{q-1}.$$ The theorem is proven.
\end{proof}
The proof above implies immediately the following which will be used in \S7.
\begin{cor}\label{claim}Maintain the notation on $A,C,A'=\apu$ etc. set in Theorem \ref{new}.
Suppose $(F,F')\in\sO_{A'}$ and $\al=\ro(A')$. If $\{v_1,v_2,\ldots,v_{2r}\}$ forms a basis for $\mathbb F^{2r}$ such that $F_i=\lr{v_1,v_2,\ldots,v_{\widetilde\al_i}}$ for all $i\in[1,N]$,
then the set $Z_{C,A,A'}$ defined in \eqref{CAA'} has the following description
$$Z_{C,A,A'}=\begin{cases}\{F_h+\lr{v}\mid v=\sum_{i=1}^mb_iv_{\widetilde\al_h+i}+\sum_{j=1}^{a_{h+1,p}}b'_jv_{\widetilde\al_h+m+j}\},&
\text{ if }h<n\text{ or }h=n, p\neq n+1;\\
\{F_h+\lr{v}\mid v=\sum_{i=1}^mc_iv_{\widetilde\al_n+i}+\sum_{j=1}^{a_{h+1,n+1}}c'_jv_{\widetilde\al_n+m+j}\}, &\text{ if }h=n, p=n+1,
\end{cases}$$
 where $m=\begin{cases}a_{h+1,1}\cdots+a_{h+1,p-1},&\text{ if }h<n\text{ or }h=n,p\neq n+1;\\
 a_{n+1,1}\cdots+a_{n+1,n},&\text{ if }h=n, p=n+1,
 \end{cases}$ and the coefficients $b'_j$ satisfy that
 not all $b_j'=0$, and $c'_j$ satisfy that either ${\bf c}=(c'_r,c'_{r-1},\ldots,c'_{r-\frac a2+1})\neq{\bf0}$ $(a=a_{n+1,n+1})$ with $(c'_{r+1},\ldots,c'_{r+\frac a2})\in\text{\rm ker}({\bf c})$, or ${\bf c}={\bf0}$ but not all $c'_{r+1},\ldots,c'_{r+\frac a2}=0$.
\end{cor}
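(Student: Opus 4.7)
The plan is to render the proof of Theorem~\ref{sjcon}(2) explicitly in the given basis. Recall from that proof that
$$Z_{C,A,A'}=Z'_{h,p}=\{S\in Z_h\mid S\subseteq F_{h+1,p},\ S\not\subseteq F_{h+1,p-1}\},$$
where $Z_h$ is the collection of isotropic one-dimensional extensions $S=F_h+\langle v\rangle\subseteq F_{h+1}$, and $F_{h+1,j}=F_h+F_{h+1}\cap F'_j$ are the central-row terms of the refinement \eqref{Fij}. The task is thus to describe $F_{h+1,p-1}$ and $F_{h+1,p}$ explicitly in the basis and then translate the containment conditions on $v$ into coordinates.

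Because $F_i=\langle v_1,\ldots,v_{\widetilde\al_i}\rangle$, we may arrange $F'=F^{A'}$ in this basis, and then \eqref{explicitFij} together with Lemma~\ref{MforB}(2) shows that each $F_{h+1,j}$ is the span of an initial segment $\langle v_1,\ldots,v_{\widetilde\al_h+s_j}\rangle$, where $s_j=\sum_{\ell\leq j}a'_{h+1,\ell}$. An inspection of the matrix change $A'-A=-E^\theta_{h,p}+E^\theta_{h+1,p}$ shows that, outside the central case, the only relevant change in row $h+1$ is at column $p$ (and at its $\theta$-image in a different row), so $s_{p-1}=m=\sum_{j<p}a_{h+1,j}$ and the new basis vectors bridging $F_{h+1,p-1}$ to $F_{h+1,p}$ are precisely the $v_{\widetilde\al_h+m+j}$ in the stated range. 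The condition ``$v\in F_{h+1,p}$ modulo $F_h$ but $v\notin F_{h+1,p-1}$ modulo $F_h$'' then becomes exactly the displayed parametrization, with the non-vanishing condition on $(b'_j)$ being the non-containment. Isotropy of the resulting $S=F_h+\langle v\rangle$ is automatic in each subcase: when $h<n$, because $F_{h+1}$ itself is isotropic; when $h=n,\,p\le n$, because $F_{n+1,p}$ is isotropic; and when $h=n,\,p>n+1$, via the Grassmannian structure on $F_{n+1}/F_n$ invoked in the proof of Theorem~\ref{sjcon} through Lemma~\ref{grass}(2)(ii).

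The central case $h=n,\,p=n+1$ requires genuine work because $\langle v,v\rangle_\sfJ=0$ is a nontrivial constraint. Writing $a=a_{n+1,n+1}$ and using that $A'-A=-E^\theta_{n,n+1}+E^\theta_{n+1,n+1}$ changes row $n+1$ only at column $n+1$, the corresponding block of basis vectors is arranged symmetrically about the midpoint $\{r,r+1\}$ and is paired hyperbolically by the form $\langle\,,\,\rangle_\sfJ$ recorded in \eqref{formJ}. Expanding $\langle v,v\rangle_\sfJ=0$ in these coordinates and following the $\mathbf a,\mathbf b$ split used in the proof of Lemma~\ref{grass}(1) produces the stated dichotomy: either $\mathbf c=(c'_r,\ldots,c'_{r-\frac a2+1})\neq\mathbf 0$, in which case the remaining entries $(c'_{r+1},\ldots,c'_{r+\frac a2})$ must lie in $\ker(\mathbf c)$, or $\mathbf c=\mathbf 0$, in which case non-containment of $v$ in $F_{n+1,n}$ forces at least one of $c'_{r+1},\ldots,c'_{r+\frac a2}$ to be nonzero.

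The main obstacle is the bookkeeping around the involution $E^\theta_{i,j}=E_{i,j}+E_{N+1-i,N+1-j}$, whose action in the central row can cause cancellations in $A'-A$ and shift the relevant indexing ranges of basis vectors; once these are tracked carefully the parametrizations drop out. As a consistency check, totaling the one-dimensional subspaces produced by each of the displayed families reproduces the structure constant $g'_{h,A,\bar p}$ appearing in Theorem~\ref{sjcon}(2).
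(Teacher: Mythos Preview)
Your proposal is correct and follows essentially the same approach as the paper, which states only that the corollary ``follows immediately'' from the proof of Theorem~\ref{sjcon}(2); you have simply made explicit what that proof yields when read against the basis $\{v_1,\ldots,v_{2r}\}$ via \eqref{explicitFij}, and your treatment of the central case $h=n,\,p=n+1$ by unwinding the $\mathbf a,\mathbf b$ dichotomy of Lemma~\ref{grass}(1) is exactly the point. One minor caution: your phrase ``isotropy is automatic'' in the subcase $h=n,\,p>n+1$ is a bit loose, since the image of $v$ in $F_{n+1}/F_n$ must still be isotropic there; the count in Lemma~\ref{grass}(2)(ii) already incorporates this constraint, but it is not literally vacuous in the way it is for $h<n$.
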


\section{Multiplication formulas in the $\bsq$-Schur algebra $S_\bsq^\scd(n,r)$, I} 

We now use the multiplication formulas given in Theorem \ref{sjcon} to derive the corresponding  multiplication formulas in $S_\bsq^\scd(n,r)$. 
We first look for the counterpart of Theorem \ref{sjcon}(1)
\begin{equation}\label{hneqn}
    \e{E^\th_{h,h+1}+\wga} * \e{A}=\de_{\wga,\ro(A)-e^\th_{h+1}}\sum_{1\leq p\leq N}g_{h,A,p}\e{\apl},
\end{equation}
where  $h\in[1,n]$, $\ga\in\La(n+1,r)$, $A=(a_{i,j}) \in \Xi_{N, 2r}$, $\apl=A+E_{h,p}^\theta-E^{\theta}_{h+1,p}$ for $p\in[1,N]$, and 
\begin{equation}\label{coefg}
g_{h,A,p}=\begin{cases}\bsq^{\sum_{j>p} a_{h,j}} \ggi{a_{h,p}+1},&\text{ if }a_{h+1,p}>0;\\
0,&\text{ if }a_{h+1,p}=0.\end{cases}
\end{equation}
\begin{rem}\label{notn} 
 Observe from \eqref{O_A} that, if an $\O_{2r}$-orbit $\sO_A$ in $\sX\times\sX$ splits into two $\SO_{2r}$-orbits $\csO({}^+\!A^\ep)\sqcup\csO({}^-\!A^{-\ep})$, then the orbit function $f_A=f_{{}^+\!A^\ep}+f_{{}^-\!A^{-\ep}}\in \fkF_\cG(\sX\times\sX)$. Thus, we have a $\mathbb Z$-algebra embedding $\fkF_G(\sX\times\sX)\subset\fkF_\cG(\sX\times\sX)$ which is the specialization of the $\sA$-algebra embedding $S_{\bsq,1}^\scb(n,r)\subset S_{\bsq}^\kappa(n,r)$
 by Theorem \ref{geosettingB}. Composing with the isomorphism in Proposition \ref{kappa-D} gives an algebra embedding $\iota:S_{\bsq,1}^\scb(n,r)\hookrightarrow S_{\bsq}^\scd(n,r)$. 
We make the convention that, when we write $e_A=\phi_{{}^+\!A^\ep}+\phi_{{}^-\!A^{-\ep}}$ for $\ep\in\{+,-\}$, we mean that $\iota(e_A)=\phi_{{}^+\!A^\ep}+\phi_{{}^-\!A^{-\ep}}$. Here and below we use the sign convention 
\begin{equation}\label{+-}
++=+=--, \qquad+-=-=-+.
\end{equation}
\end{rem}
 
 Now, in \eqref{hneqn}, let $B=E^\th_{h,h+1}+\wga$ and assume $\la=\ro(B)$ and $\mu=\operatorname{ro}(A)=\operatorname{co}(B)$. Then the upper right corner matrix
 $B^{\text{\normalsize $\llcorner$}}=0$ (and so, $\text{sgn}(B^{\text{\normalsize $\llcorner$}})=+$). Thus, by Proposition \ref{split} and notations in Definitions \ref{split2} and \ref{csO}, we have 
 $$\sO_B=\begin{cases}\begin{aligned}
    \csO(\epsiaepsi{B}{+}{+}) \sqcup\csO(\epsiaepsi{B}{-}{-}), &\text{   if }b:=b_{n+1,n+1}=0,\\
   \csO(\bzero),\qquad\ &\text{   otherwise.}
\end{aligned}\end{cases}$$
Hence, we have
$$\begin{cases}
B\in \bbb\sqcup\hhh,&\text{ if }h<n;\\
B\in\bbh\sqcup\hhh,&\text{ if }h=n.\end{cases}$$
Since we only need to consider $A=(a_{i,j})\in\Xi_{N, 2r}$ with $\mu=\ro(A)$ in \eqref{hneqn}, the three selections of $B$ above determine the column weights $\cw(\cBB)$ of $\cBB\in\{ ^+\!B^+,{}^-\!B^-,\dot B\}$, which further determine certain selections of $A$ and, consequently, of $\apl$. We now derive multiplication formulas in $S_\bsq^\scd(n,r)$ associated with the three selections of $B$ in three subsections below. 

Observe first the following general relations. Since $\apl=A+E^\th_{h,p}-E^\th_{h+1,p}$ (and $\apu=A-E_{h,p}^\theta+E^{\theta}_{h+1,p}$), it follows that
 \begin{equation}\label{sgnhapsgna}
   \text{sgn}(\apl^{\text{\normalsize $\llcorner$}})(=\text{sgn}(\apu^{\text{\normalsize $\llcorner$}}))=\begin{cases}
       \text{sgn}(\urcm), \ &\text{ if $h<n$ or $h=n$ and $1\leq p\leq n+1$},\\
       -\text{sgn}(\urcm), \ &\text{ if $h=n$ and $p>n+1$}.
   \end{cases}
\end{equation}
Further, if $a:=a_{n+1,n+1}=0$ and $\ep=\text{sgn }\urcm$, then we have $G$-orbits  $\sO_A,\sO_{\apl}$ splitting into $\cG$-orbits:
 \begin{equation}\label{sgnhap}
\aligned
(1)\qquad\sO_A&=\csO({}^+\!A^\ep)\sqcup\csO({}^-\!A^{-\ep}),\\
(2)\;\,\quad\sO_{\apl}&=    \begin{cases}
    \csO(\paep{\apl})\sqcup\csO(\mamep{\apl}), &\text{ if $h<n$ or $h=n$ and $1\leq p\leq n+1$},\\
    \csO(\pamep{\apl})\sqcup\csO(\maep{\apl}), &\text{ if $h=n$ and $p>n+1$}.
\end{cases}    \endaligned
\end{equation}
For simplicity, we fix the following notational abbreviation for the {\it central entries} of $A$ and $B$:
$$a:=a_{n+1,n+1}=0,\qquad b:=b_{n+1,n+1}.$$


\subsection{The $B\in\hhh$  case} Since $\sO_B=\csO(\dot B)$ in this case and $\cw(\dot B)=\mu^\bullet\in\La^\bullet(n+1,r)$, \eqref{phiAB} and Lemma \ref{crw} imply that it suffices to consider those $A$ with $A\in\hhh\sqcup\hbb\sqcup\hbh$. 

\begin{thm}\label{thmhhh}
Let $A,B\in\Xi_{N,2r}$ with $B=E^\th_{h,h+1}+\wga$, and assume $\wla=\ro(B)$, $\wmu=\co(B)=\ro(A)$, and $\wnu=\co(A)$, where  $h\in[1,n]$, and $\ga,\la,\mu,\nu\in\La(n+1,r)$. Let $\ep=\text{\rm sgn }(\urcm)$. If $B\in\hhh$, then $\sO_B=\csO(\dot B)$ and the following multiplication formulas hold in $S_\bsq^\scd(n,r)$:
    \begin{itemize}
        \item [(1)]If $A\in\hhh$,
then $\sO_A=\csO(\dot A)$ and 
\begin{equation}\nonumber
    \phi_{\bzero} * \phi_{\azero}=\begin{cases}
  \sum_{1 \leqslant p \leqslant N} \gba \phi_{\apzero}, &\text{ if $h<n$ or $h=n$ and $a\neq2$},\\
  \\
  \sum\limits_{1\leq p\leq N \atop p\neq n+1}\gba \phi_{\apzero}+g_{n,A,n+1}(\phi_{\paep{(_n\! A_{n+1})}}+\phi_{\mamep{(_n \! A_{n+1})}}),&\text{ if $h=n$ and $a=2$};
  \end{cases}
\end{equation} 
\item [(2)] If $A\in \hbb\sqcup\hbh$, then
 \begin{equation}\begin{aligned}\label{bzeropaep}
 \phi_{\bzero} * \phi_{\paep{A}}
=&\begin{cases}\sum_{1\leq p\leq N}\gba \phi_{\papsgn}, &\text{ if }h<n,\\
\sum\limits_{1\leq p<n+1}\!\gba \phi_{\paep{(\apl)}}+\!\sum\limits_{ p>n+1}\!\gba \phi_{\maep{(\apl)}}, &\text{ if }h=n,\end{cases}
\\
\phi_{\bzero} * \phi_{\mamep{A}}
=&\begin{cases}
\sum_{1\leq p\leq N}\gba \phi_{\mapmsgn}, &\text{ if }h< n,\\
\sum\limits_{1\leq p<n+1}\!\gba \phi_{\mamep{(\apl)}}+\hspace{-2mm}\sum\limits_{ p>n+1}\hspace{-2mm}\gba \phi_{\pamep{(\apl)}}, &\text{ if }h=n.
\end{cases} \end{aligned}\end{equation}
    \end{itemize}
    Here all coefficients $\gba$ are given in \eqref{coefg}.
\end{thm}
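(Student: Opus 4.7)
My plan is to transport the type $B$ identity $e_B\,e_A=\sum_p g_{h,A,p}\,e_{\apl}$ from Theorem \ref{sjcon}(1) through the algebra embedding $\iota\colon S^\scb_{\bsq,1}(n,r)\hookrightarrow S^\scd_\bsq(n,r)$ of Remark \ref{notn}, re-express every $e_X$ in the natural basis $\{\phi_{\cBX}\}$ using the orbit splitting rule (Proposition \ref{split} and \eqref{O_A}), and then separate the resulting identity into the two asserted formulas. Under $\iota$ one has $e_X\mapsto\phi_{\dot X}$ when $X\in\hhh$ and $e_X\mapsto\phi_{\paep X}+\phi_{\mamep X}$ otherwise, with leading sign determined by $\text{sgn}(X^\llcorner)$.

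In Case (1), both $\iota(e_B)=\phi_{\dot B}$ and $\iota(e_A)=\phi_{\dot A}$ are single basis vectors, so one only needs to decide for each $p$ whether $\apl\in\hhh$. The central entry $a_{n+1,n+1}$ of $\apl$ agrees with that of $A$ except when $(h,p)=(n,n+1)$, where it decreases by $2$. Hence $\apl\in\hhh$ in all cases except the boundary $h=n,\,p=n+1,\,a=2$; there \eqref{sgnhapsgna} preserves the leading sign $\ep=\text{sgn}(\urcm)$ and \eqref{sgnhap} gives $\iota(e_{\apl})=\phi_{\paep{\apl}}+\phi_{\mamep{\apl}}$. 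Substituting yields the two displayed sub-formulas of (1).

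In Case (2), $\iota(e_A)=\phi_{\paep A}+\phi_{\mamep A}$ since $A$ has central entry zero. The term $(h,p)=(n,n+1)$ has coefficient zero (as $a_{h+1,p}=a_{n+1,n+1}=0$), and every remaining $\apl$ also has central entry zero and splits; by \eqref{sgnhapsgna} the leading sign stays $\ep$ for $h<n$ and for $h=n,\,p<n+1$, and flips to $-\ep$ for $h=n,\,p>n+1$, so that $\iota(e_{\apl})$ equals $\phi_{\paep{\apl}}+\phi_{\mamep{\apl}}$ or $\phi_{\pamep{\apl}}+\phi_{\maep{\apl}}$ accordingly. Collecting produces one combined identity
\[
\phi_{\dot B}\bigl(\phi_{\paep A}+\phi_{\mamep A}\bigr)=\sum_p g_{h,A,p}\,\iota(e_{\apl}),
\]
which must now be split into the two asserted formulas. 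When $A\in\hbb$, Lemma \ref{crw}(3) gives $\cw(\paep A)=\nu^\ep\ne\nu^{-\ep}=\cw(\mamep A)$, and the column weights of the two $\phi$'s inside each $\iota(e_{\apl})$ are similarly distinguished by their second signs; right-multiplying the combined identity by the weight idempotents $1_{\nu^\ep}$ and $1_{\nu^{-\ep}}$ therefore isolates the appropriate summand on both sides, producing the two individual formulas.

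The main obstacle is the $\hbh$ sub-case, where by Lemma \ref{crw}(4) both $\phi_{\paep A}$ and $\phi_{\mamep A}$ have row weight $\mu^\bullet$ and column weight $\nu^\bullet$, so weight idempotents alone cannot decouple them. Here I plan to compute $\phi_{\dot B}\phi_{\paep A}$ directly in the Hecke algebra: using $\phi_{\paep A}(\cx_{\nu^\bullet})=T_{\cW_{\mu^\bullet}d_1\cW_{\nu^\bullet}}$ with $d_1$ the distinguished representative recorded in Definition \ref{defkd}(4), the specific form of $B=E^\theta_{h,h+1}+\wga$, and Corollary \ref{claim} to pin down which isotropic flags $E$ contribute, one identifies precisely which of $\phi_{\paep{\apl}}$ or $\phi_{\mamep{\apl}}$ (resp.\ $\phi_{\pamep{\apl}}$ or $\phi_{\maep{\apl}}$) actually occurs in $\phi_{\dot B}\phi_{\paep A}$; the companion formula for $\phi_{\dot B}\phi_{\mamep A}$ is then obtained by subtraction from the combined identity. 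Verifying that this direct computation systematically selects the correctly signed half of each split $\apl$ orbit is the key technical point of the proof.
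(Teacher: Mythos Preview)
Your outline for part (1) and for the $\hbb$ sub-case of part (2) is exactly what the paper does: transport Theorem \ref{sjcon}(1) through $\iota$, decide for each $p$ whether $\apl$ lies in $\hhh$ or splits, and in the $\hbb$ case right-multiply by the column-weight idempotents $1_{\nu^{\pm\ep}}$ to separate the two halves.

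For the $\hbh$ sub-case, however, the paper does \emph{not} compute in the Hecke algebra, and Corollary \ref{claim} (which is stated for the $e_C$ case) is not the tool used here. Instead the paper proves a purely geometric \textbf{Claim}: if $F,E,F'\in\sF^\jmath_{n,r}$ satisfy $\fkm(F,E)=B$, $\fkm(E,F')=A$, $\fkm(F,F')=\apl$, then
\[
(E,F')\in\csO({}^{\ep_1}\!A^{\ep_2})\ \Longrightarrow\ (F,F')\in
\begin{cases}
\csO({}^{\ep_1}\!(\apl)^{\ep_2}),&h<n\text{ or }h=n,\,p\le n,\\
\csO({}^{-\ep_1}\!(\apl)^{\ep_2}),&h=n,\,p\ge n+2.
\end{cases}
\]
The proof tracks the invariants $d_{F,F'}$ and $d_{F',F}$ of Lemma \ref{O_A2} against $d_{E,F'}$ and $d_{F',E}$. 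For $h<n$ one has $F_j=E_j$ for all $j\ge n$, so both pairs of invariants coincide. For $h=n$ write $F_n=E_n+\langle e\rangle$; the condition on $p$ from Corollary \ref{Cormulti} forces $e\in F'_n$ when $p\le n$ (so again $d_{F,F'}=d_{E,F'}$, $d_{F',F}=d_{F',E}$) and $e\notin F'_{n+1}$ when $p\ge n+2$ (so $d_{F,F'}=d_{E,F'}+1$ while $d_{F',F}=d_{F',E}$). This Claim tells you directly that $\phi_{\dot B}\phi_{\paep A}$ is supported on $\{\phi_{\paep{(\apl)}}:p\le n\}\cup\{\phi_{\maep{(\apl)}}:p>n+1\}$ and its companion on the complementary set; comparing with the combined identity \eqref{th6.2(1)} then forces the coefficients. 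Your plan to work instead via $T_{\cW_{\mu^\bullet}d_1\cW_{\nu^\bullet}}$ would in principle lead to the same answer, but the double-coset bookkeeping in $\cW$ (Definition \ref{defkd}(4)) is considerably harder to execute than the two-line dimension count above, and you have not indicated how to carry it out.
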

\begin{proof}
(1) For  $A\in\hhh$, by the hypotheses, we have $(\rw( \azero), \cw(\azero))=(\mu^\bullet,\nu^\bullet)$.
Since, for $h<n$ or $h=n$ and $a_{n+1,n+1}\neq 2$, the central entry of $\apl$ is nonzero, it follows that $\sO_{\apl}=\csO(\dot\apl)$ is non-split. Hence, we have $e_B=\phi_{\dot B}$, $e_A=\phi_{\dot A}$, and $e_{\apl}=\phi_{\dot\apl}$, the first case in (1) follows immediately from \eqref{hneqn}.

    If $h=n$ and $a_{n+1,n+1}=2$, then all $G(q)$-orbits involved are non-split except $\sO_{{}_n\!A_{n+1}}$ which splits into two $\cG(q)$-orbits by Proposition \ref{split}. Since $\text{sgn}(\ur{_nA_{n+1}})=\text{sgn}(\ura)$ by \eqref{sgnhapsgna}, we have $\sO_{_n\!A_{n+1}}=\csO({\paep{_n A_{n+1}}})\sqcup\csO({\mamep{_n A_{n+1}}})$ and $\cw({\paep{_n A_{n+1}}})=\nu^\bullet=\cw({\mamep{_n A_{n+1}}})$. Thus, $e_{_n\!A_{n+1}}=\phi_{{\paep{_n A_{n+1}}}}+\phi_{{\mamep{_n A_{n+1}}}}$ in $S^\scd_\bsq(n,r)$. Substituting gives the second case in (1).

(2) 
For $A\in \hbb\sqcup\hbh$, applying the orbit splittings in \eqref{sgnhap} to \eqref{hneqn} yields
\begin{equation}    \label{th6.2(1)}
\phi_{\bzero} * (\phi_{\paep{A}}+\phi_{\mamep{A}})=\begin{cases}
    \sum_{1\leq p\leq N}\gba(\phi_{\papsgn}+\phi_{\mapmsgn}), &\text{ if }h<n,\\
    \sum\limits_{1\leq p<n+1}\!\gba (\phi_{\papsgn}+\phi_{\mapmsgn})+\!\sum\limits_{ p>n+1}\!\gba (\phi_{\maep{\apl}}+\phi_{\pamep{\apl}}),&\text{ if }h=n,
\end{cases}
\end{equation}
where we omitted the $p=n+1$ term in the $h=n$ case since the central entry of $_nA_{n+1}=-2$. We want to separate \eqref{th6.2(1)}
into two formulas for $\phi_{\bzero} * \phi_{\paep{A}}$ and $\phi_{\bzero} *\phi_{\mamep{A}}$.

The case for $A\in\hbb$ is easy since this implies all $\apl\in \hbb$. Thus, 
 $$\cw(\paep{A})=\cw(\papsgn)=\cw(\maep{\apl})=\nu^\ep,\;\;\;\cw(\mamep{A})=\cw(\mamep{\apl})=\cw(\pamep{\apl})=\nu^{-\ep}.$$ 
 Hence, by \eqref{phiAB}, multiplying the idempotent $1_{\nu^\ep}$ (resp., $1_{\nu^{-\ep}}$) to the right hand side of \eqref{th6.2(1)} yields the required formula for $\phi_{\bzero} * \phi_{\paep{A}}$
(resp.,  $\phi_{\bzero} * \phi_{\mamep{A}}$).


The case for $A\in\hbh$ cannot be obtained by an idempotent argument as above since all terms on the right hand side of \eqref{th6.2(1)} have a sole column weight $\nu^\bullet$. We need to determine the terms by the definition of convolution products in \eqref{f_Of_O'}.
In other words,
to compute $\phi_{\bzero} * \phi_{\paep{A}}$ and $\phi_{\bzero} *\phi_{\mamep{A}}$, it requires to compute the convolution products
$f_{\csO(\bzero)} * f_{\csO(\paep{A})}$ and $f_{\csO(\bzero)} *f_{\csO(\mamep{A})}$ in $\fkF^\jmath_{\cG(q)}(\sX\times\sX)$ via \eqref{f_Of_O'}. The following claim is sufficient to complete the proof.

\begin{center}
\begin{itemize}
\item[\textbf{Claim}:\!\!]\; For $h\in[1,n]$ and $p\in[1,N]$, if $F, F^\prime, E\in \mathcal F_{n,r}^\jmath$ satisfy $\fkm({F},E)=B$, $\fkm(E,F^\prime)=A$, and $\fkm({F},F^\prime)=\apl$, then
  \begin{equation}\label{bbulabul3}(E,F')\in
{ \csO(^{\ep_1}\!A^{\ep_2})}\implies
        (F,F^\prime)\in\begin{cases}
   \csO( ^{\ep_1}\!{\apl}^{\ep_2}), &\text{ if $h<n$ or $h=n$ and $p\leq n$},\\
   \csO( ^{-\ep_1}\!{\apl}^{\ep_2}), &\text{ if $h=n$ and $p\geq n+2$}.\end{cases}
    \end{equation}
\end{itemize}
\end{center}
By the claim, we see that $\phi_{\bzero} * \phi_{\paep{A}}$ (resp., $\phi_{\bzero} *\phi_{\mamep{A}}$) is a linear combination of $\phi_{\papsgn}$ (resp., $\phi_{\mamep{\apl}}$)  if $h<n$, or a linear combination of $\phi_{\papsgn}$ and $\phi_{\maep{\apl}}$ (resp., $\phi_{\mamep{\apl}}$ and $\phi_{\pamep{\apl}}$)  if $h=n$. Note that, as natural basis elements in $S^\scd_\bsq(n,r)$ defined by distinct $(\cW_{\la^\bullet},\cW_{\mu^\bullet})$ double cosets (see Definition \ref{defkd}(4)), $\phi_{\papsgn}$ and $\phi_{\mamep{\apl}}$ (or $\{\phi_{\papsgn},\phi_{\maep{\apl}}\}$ and $\{\phi_{\mamep{\apl}}, \phi_{\pamep{\apl}}\}$) are linearly independent. Hence, equating with \eqref{th6.2(1)} gives \eqref{bzeropaep}.


It remains to prove the claim.    Since $a=a_{n+1,n+1}=0$ and $\fkm(E,F^\prime)=A$, 
$(E,F')\in
{ \csO(^{\ep_1}\!A^{\ep_2})}$ implies, by Lemma \ref{O_A2},  that  $\ep_11=(-1)^{d_{E,F'}-r}$ and $\ep_21=(-1)^{d_{F',E}-r}$.

   If $h<n$, then  $E\overset1\subset_hF$ ($\iff\fkm({F},E)=B$ by Corollary \ref{Cormulti}) implies $F_j=E_j$, for $j\geq n>p$, and so,
    \begin{equation}\label{h<n}
    F_n+F_{n+1}\cap F_{n}^{'}=E_n+E_{n+1}\cap F_{n}^{'},\ F^{'}_n+F^{'}_{n+1}\cap F_{n}={F}^{'}_n+{F}^{'}_{n+1}\cap E_{n}.\end{equation} 
    Thus, $d_{F,F'}=d_{E,F'}$ and $d_{F',F}=d_{F',E}$. Hence, $(F,F')\in\csO({}^{\epsilon_1}\hspace{-1mm}\apl^{\epsilon_2})$. 
  

    
    Now assume $h=n$ and $F_n=E_n+\lr{e}$. Then $e\in E_{n+1}$ since $F_n^\perp=F_{n+1}=E_{n+1}= E_{n}^\perp$. Note also that the hypothesis in the Claim implies that the index $p$ satisfies the conditions:
    \begin{equation}\label{condition on p}
    F_n\cap F_j^{'}=E_n\cap F_j^{'}\text{ if }j<p\text{ and }F_n\cap F_j^{'}\neq E_n\cap F_j^{'}\text{ if }j\geq p.
    \end{equation}
     Thus,
    if $p\leq n$, then $F_n\cap F_n^{'}\neq E_n\cap F_n^{'}$ and $F_{n}\cap F_{n+1}^{'}\neq E_{n}\cap F_{n+1}^{'}$. 
   Hence, $e\in F^{'}_n$ and consequently, \eqref{h<n} continue to hold. This implies  $(F,F')\in\csO(^{\ep_1}\hspace{-1mm}{{}_n\!A_p}\!^{{\ep_2}})$ for all $p\leq n$.

    Finally, assume $p\geq n+2$. Then $F_n\cap F_{n+1}^{'}=E_n\cap F_{n+1}^{'}$ and so, $e\notin F^{'}_{n+1}$. Thus, $F^{'}_{n+1}\cap F_{n}={F}^{'}_{n+1}\cap  E_{n}$. Hence,
    $$e\in (F_n+F_{n+1}\cap F_{n}^{'})-( E_n+ E_{n+1}\cap F_{n}^{'}),\text{ and } F^{'}_n+F^{'}_{n+1}\cap F_{n}={F}^{'}_n+{F}^{'}_{n+1}\cap  E_{n}.$$
    Since the central entries of $A$ and $\apl$ are 0, this display continues to hold if $F'_n$ is replaced by $F'_{n+1}$. Hence, we have 
    $d_{F,F'}=d_{E,F'}+1$ and $d_{F',F}=d_{F',E}$. (This can be seen by taking $(F,E)=(F^\wla,F^\wmu)$ and $(F,F')=(F^\wla, F^{A'})$. Thus, $F_n=E_n+\lr{e}$ with $e=e_m, m\in I'_{n,p}$; see Proposition \ref{new}.) 
    Consequently, $(F,F')\in\csO(^{-\ep_1}\hspace{-1mm}{{}_n\!A_p}\!^{{\ep_2}})$ for all $p> n+1$.\end{proof}

\subsection{The case $B\in\bbb$ (\!$\implies h<n$)} In this case, $\sO_B=\csO(^+\!B^+)\sqcup \csO(^-\!B^-)$. Thus, it suffices to consider those $A$ in \eqref{hneqn} with $A\in\bbb\sqcup\bbh$.

\begin{thm}\label{th6.3}
Maintain the same assumptions on $A,B,h,\la,\mu,\nu$, and $\ep=\text{\rm sgn }(\urcm)$ as in Theorem \ref{thmhhh}, and assume $B=E^\th_{h,h+1}+\wga\in\bbb$.  Then $h<n$ and, for $A\in\bbh\sqcup\bbb$, the following multiplication formulas hold in $S_\bsq^\scd(n,r)$:

\begin{equation}\label{bbbb}
   \phi_{\pap{B}}*\phi_{\paep{A}}=
    \sum_{1\leq p\leq N}\gba \phi_{\papsgn},
\qquad
\phi_{\mam{B}}*\phi_{\mamep{A}}=
    \sum_{1\leq p\leq N}\gba \phi_{\mapmsgn}.
\end{equation} 
\end{thm}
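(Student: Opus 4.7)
The plan is to derive the two formulas in \eqref{bbbb} from the type $B$ identity in Theorem \ref{sjcon}(1) via the algebra embedding $\iota: S_{\bsq,1}^\scb(n,r) \hookrightarrow S_{\bsq}^\scd(n,r)$ described in Remark \ref{notn}, combined with weight idempotent projection. First, note that $B \in \bbb$ automatically forces $h < n$, since otherwise $\co(B)_{n+1} = \wga_{n+1} + 1$ would be odd, contradicting the fact that $\wmu_{n+1} = 2\mu_{n+1}$ is even. Applying Theorem \ref{sjcon}(1) then yields
\[
e_B * e_A = \sum_{1\leq p \leq N} g_{h,A,p}\, e_{\apl}
\]
in $S_{\bsq,1}^\scb(n,r)$.

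Next, I would rewrite each basis element in the $\phi$-basis of $S^\scd_\bsq(n,r)$ using Proposition \ref{split} and Definition \ref{csO}. Since $B \in \bbb$ has $\ur{B} = 0$ (so $\sgn(\ur{B}) = +$) and $b_{n+1,n+1} = 0$, we obtain $e_B = \phi_{\pap{B}} + \phi_{\mam{B}}$; similarly, for $A \in \bbb \sqcup \bbh$, $e_A = \phi_{\paep{A}} + \phi_{\mamep{A}}$ with $\ep = \sgn(\urcm)$. The crucial structural observation is that since $h < n$, the modification $A \mapsto \apl$ takes place only in rows $h, h+1, N-h, N+1-h$, all distinct from $n+1$; hence $(\apl)_{n+1,n+1} = a_{n+1,n+1} = 0$, $\sgn(\ur{(\apl)}) = \sgn(\urcm) = \ep$, and $\apl$ remains in $\bbb$ (resp., $\bbh$) when $A$ does. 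Thus $e_{\apl} = \phi_{\papsgn} + \phi_{\mapmsgn}$, and substituting produces
\[
(\phi_{\pap{B}} + \phi_{\mam{B}})*(\phi_{\paep{A}} + \phi_{\mamep{A}}) = \sum_p g_{h,A,p}(\phi_{\papsgn} + \phi_{\mapmsgn}).
\]

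Finally, by \eqref{phiAB} the two cross terms on the left vanish: $\phi_{\pap{B}} * \phi_{\mamep{A}} = 0$ since $\cw(\pap{B}) = \mu^+ \neq \mu^- = \rw(\mamep{A})$, and likewise $\phi_{\mam{B}} * \phi_{\paep{A}} = 0$. To separate the two surviving terms, I would left-multiply by the weight idempotents $1_{\la^+}$ and $1_{\la^-}$, where $\la = \ro(B)$. A direct computation from $B = E^\th_{h,h+1} + \wga$ together with $\co(B) = \ro(A)$ gives $\ro(\apl) = \la$, so $\rw(\papsgn) = \la^+$ matches $\rw(\phi_{\pap{B}} * \phi_{\paep{A}})$ and $\rw(\mapmsgn) = \la^-$ matches $\rw(\phi_{\mam{B}} * \phi_{\mamep{A}})$; the idempotent projection then delivers the two desired formulas. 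The main technical point will be the sign/label bookkeeping in the decomposition step: verifying $\sgn(\ur{(\apl)}) = \sgn(\urcm)$ for $h < n$ (so that $e_{\apl}$ splits with the same $\ep$ as $e_A$), and $\ro(\apl) = \ro(B)$. Once these are in place, no further geometric analysis is required, in contrast to the orbit-splitting ``Claim'' invoked in the proof of Theorem \ref{thmhhh}(2), where both summands on the right shared the same column weight.
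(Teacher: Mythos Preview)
Your proposal follows essentially the same route as the paper's proof: decompose $e_B$, $e_A$, and each $e_{\apl}$ into their $\phi$-summands via the embedding of Remark~\ref{notn}, observe that the cross terms $\phi_{\pap{B}}*\phi_{\mamep{A}}$ and $\phi_{\mam{B}}*\phi_{\paep{A}}$ vanish by column/row weight mismatch, and then separate the remaining two products by left-multiplying with the idempotents $1_{\la^+}$ and $1_{\la^-}$.

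One small arithmetic slip in your justification of $h<n$: for $h=n$ the matrix $E^\th_{n,n+1}=E_{n,n+1}+E_{n+2,n+1}$ contributes $2$ (not $1$) to column $n+1$, so $\co(B)_{n+1}=\wga_{n+1}+2\geq 2$, which is even; your parity argument therefore does not apply. The correct reason (as the paper states) is simply that $\co(B)_{n+1}\geq 2$ contradicts $\mu_{n+1}=0$, which is required for $B\in\bbb$.
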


\begin{proof}The assertion $h<n$ is clear since the central component of $\co(B)$ is at least 2 when $h=n$.
    By the hypothesis, we have $(\rw(\pap{B}), \cw(\pap{B}))=(\la^+,\mu^+)$ and $(\rw(\mam{B}), \cw(\mam{B}))=(\la^-,\mu^-)$.
    If $A\in\bbh\sqcup\bbb$, then $\sO_A=\csO(\paep{A})\sqcup\csO(\mamep{A})$ with
    $$(\rw(\paep{A}),\cw(\paep{A}))=\begin{cases}(\mu^+,\nu^{\ep}),&\text{ if }A\in\bbb;\\
    (\mu^+,\nu^\bullet),&\text{ if }A\in\bbh.\end{cases}\qquad
    (\rw(\csO(\mamep{A}),\cw(\csO(\mamep{A})))=\begin{cases}(\mu^-,\mu^{-\ep}),&\text{ if }A\in\bbb;\\
    (\mu^-,\nu^{\bullet}),&\text{ if }A\in\bbh.\end{cases}$$


By \eqref{sgnhap}(2), \eqref{hneqn} has the following decomposition in $S_\bsq^\scd(n,r)$, 
\begin{equation}\label{bbbeq1}
    (\phi_{\pap{B}}+\phi_{\mam{B}})*(\phi_{\paep{A}}+\phi_{\mamep{A}})=
        \sum_{1\leq p\leq N}\gba (\phi_{\paep{\apl}}+\phi_{\mamep{\apl}}).
\end{equation}
Clearly, LHS$= \phi_{\pap{B}}*\phi_{\paep{A}}+\phi_{\mam{B}}*\phi_{\mamep{A}}$ since $\phi_{\pap{B}}*\phi_{\mamep{A}}=0=\phi_{\mam{B}}*\phi_{\paep{A}}$. Also, every natural basis element $\phi_{\cBA'}$ on the right hand side has row weight
$\rw(\cBA')=\la^+$ or $\la^-$.
Thus,  \eqref{bbbb} is obtained by multiplying idempotents $1_{\la^+}$ or
$1_{\la^-}$ to the left hand side of \eqref{bbbeq1}.
\end{proof}




\subsection{The case $B\in\bbh$(\!\!$\implies h=n$)} We have in this case $\sO_B=\csO(^+\!B^+)\sqcup \csO(^-\!B^-)$ with $(\rw(^+\!B^+),\cw(^+\!B^+))=(\la^+,\mu^\bullet)$ and $(\rw(^-\!B^-),\cw(^-\!B^-))=(\la^-,\mu^\bullet)$. Thus, it suffices to consider those $A$ with $A\in\hhh\sqcup\hbh\sqcup\hbb$.

\begin{thm}\label{thbbh}
Let $A,B\in\Xi_{N,2r}$ and $\ep=\text{\rm sgn }(\urcm)$, and assume $\co(B)=\ro(A)$, $B\in\bbh$, and $B-E^\th_{n,n+1}$ is diagonal.  Then the following multiplication formulas hold in $S_\bsq^\scd(n,r)$:
\begin{itemize}
    \item[(1)]If $A\in\hhh$, then $\sO_A=\csO(\dot A)$, and 
   $$ \phi_{\pap{B}}* \phi_{\azero}=g_{_{n,A,{n+1}}}
  \phi_{\paep{_n A_{n+1}}},\qquad \phi_{\mam{B}}* \phi_{\azero}=g_{_{ n,  A,{n+1}}}\phi_{\mamep{_n A_{n+1}}}.$$
  \item[(2)] If $A\in\hbh\sqcup\hbb$,  then $\sO_A=\csO(\paep{A})\sqcup\csO(\mamep{A})$ and
  \begin{equation}\begin{aligned}\nonumber
   \phi_{\pap{B}}*\phi_{\paep{A}}&=
  \sum \limits_{1\leq p<n+1}\!g_{_n,A,p} \phi_{\paep{_n\!A_p}},
\\
\phi_{\mam{B}}*\phi_{\mamep{A}}&=
  \sum \limits_{1\leq p<n+1}\!g_{n,A,p} \phi_{\mamep{\apl}}, 
   \\
\phi_{\pap{B}}* \phi_{\mamep{A}}&=
   \sum \limits_{ p>n+1}\!g_{n,A,p} \phi_{\pamep{\apl}}, 
    \\
\phi_{\mam{B}}* \phi_{\paep{A}}&=
\sum \limits_{ p>n+1}\!g_{n,A,p} \phi_{\maep{\apl}}.
  \end{aligned}
\end{equation} 
\end{itemize}\end{thm}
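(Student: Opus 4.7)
The plan is to apply Theorem \ref{sjcon}(1) to $e_B\ast e_A$ in $S^\scb_{\bsq,1}(n,r)$ and then transport the resulting identity to $S^\scd_\bsq(n,r)$ via the algebra embedding $\iota$ of Remark \ref{notn}; since $B^\llcorner=0$, one has $\iota(e_B)=\phi_{\pap B}+\phi_{\mam B}$. The hypothesis $B\in\bbh$ with $B-E^\th_{n,n+1}$ diagonal forces $h=n$ and, by computing the column-$(n+1)$ sum of $B$, forces $\mu_{n+1}=1$, so row $n+1$ of $A$ has entry sum $\widehat\mu_{n+1}=2$. For Case (1), $A\in\hhh$ (so $a_{n+1,n+1}$ is even and positive) therefore forces $a_{n+1,n+1}=2$ with all other row-$(n+1)$ entries of $A$ vanishing; the sum in Theorem \ref{sjcon}(1) collapses to the single term $p=n+1$, and one checks that ${}_nA_{n+1}\in\bbh$ with $\text{\rm sgn}(({}_nA_{n+1})^\llcorner)=\ep$, whence $\iota(e_{{}_nA_{n+1}})=\phi_{\paep{{}_nA_{n+1}}}+\phi_{\mamep{{}_nA_{n+1}}}$. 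Separating $(\phi_{\pap B}+\phi_{\mam B})\phi_{\dot A}=g_{n,A,n+1}(\phi_{\paep{{}_nA_{n+1}}}+\phi_{\mamep{{}_nA_{n+1}}})$ by the left idempotents $1_{\la^+}$ and $1_{\la^-}$ then gives both formulas of Case (1).

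For Case (2) ($A\in\hbh\sqcup\hbb$, so $a_{n+1,n+1}=0$), the $p=n+1$ contribution in Theorem \ref{sjcon}(1) vanishes since $g_{n,A,n+1}=0$. For each $p\ne n+1$, inspection gives ${}_nA_p\in\bbh$ when $A\in\hbh$ and ${}_nA_p\in\bbb$ when $A\in\hbb$, while \eqref{sgnhapsgna} yields $\text{\rm sgn}(({}_nA_p)^\llcorner)=\ep$ for $p\le n$ and $=-\ep$ for $p\ge n+2$; accordingly $\iota(e_{{}_nA_p})$ equals $\phi_{\paep{{}_nA_p}}+\phi_{\mamep{{}_nA_p}}$ for $p\le n$ and $\phi_{\pamep{{}_nA_p}}+\phi_{\maep{{}_nA_p}}$ for $p\ge n+2$. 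Applying $\iota$ to Theorem \ref{sjcon}(1) and multiplying by $1_{\la^{\pm}}$ on the left (using Lemma \ref{crw} to identify row weights) isolates the two row-weight-restricted identities $\phi_{\pap B}(\phi_{\paep A}+\phi_{\mamep A})=\cdots$ and $\phi_{\mam B}(\phi_{\paep A}+\phi_{\mamep A})=\cdots$. In the subcase $A\in\hbb$, Lemma \ref{crw} gives $\cw(\paep A)=\nu^\ep\ne\nu^{-\ep}=\cw(\mamep A)$ together with a matching splitting on the right ($\cw(\paep{{}_nA_p})=\cw(\maep{{}_nA_p})=\nu^\ep$, $\cw(\mamep{{}_nA_p})=\cw(\pamep{{}_nA_p})=\nu^{-\ep}$), so further multiplication by $1_{\nu^{\pm\ep}}$ on the right cleanly separates the identities into the four advertised formulas.

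The main obstacle is the subcase $A\in\hbh$, where all four products $\phi_{\pap B}\phi_{\paep A}$, $\phi_{\pap B}\phi_{\mamep A}$, $\phi_{\mam B}\phi_{\paep A}$, $\phi_{\mam B}\phi_{\mamep A}$ share column weight $\nu^\bullet$ and right idempotents are useless. Here I will invoke the geometric claim established in the proof of Theorem \ref{thmhhh}(2), whose argument is purely local in $F,E,F'$ and remains valid for any $B$ of the form $E^\th_{n,n+1}+\mathrm{diag}$: given $\fkm(F,E)=B$, $\fkm(E,F')=A$ and $\fkm(F,F')={}_nA_p$, the $\cG(q)$-orbit of $(E,F')$ determines that of $(F,F')$ via $(E,F')\in\csO({}^{\ep_1}\!A^{\ep_2})\Rightarrow(F,F')\in\csO({}^{\ep_1}\!({}_nA_p)^{\ep_2})$ for $p\le n$ and $\csO({}^{-\ep_1}\!({}_nA_p)^{\ep_2})$ for $p\ge n+2$. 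The additional ingredient is an analysis of which $\cG(q)$-orbit $(F,E)$ lies in: since $B\in\bbh$ forces $\la_{n+1}=0$, hence $F_n=F_{n+1}$ is maximal isotropic, and since $b_{n,n+1}=1$ forces the extra vector of $F_n$ over $E_n$ to lie in $E_{n+1}$ so that $F_n\subseteq E_{n+1}$, a short computation yields $d_{F,E}=d_{E,F}=\dim(F_n\cap M_r)=d_{F,F'}$; by Lemma \ref{O_A2}, this forces $(F,E)\in\csO(\pap B)$ precisely when the first sign in the $\cG(q)$-orbit label of $(F,F')$ is $+$. Matching this compatibility against the four sign combinations of the claim shows that $\phi_{\pap B}\phi_{\paep A}$ and $\phi_{\mam B}\phi_{\mamep A}$ each receive contributions only from $p\le n$ (producing $\phi_{\paep{{}_nA_p}}$ and $\phi_{\mamep{{}_nA_p}}$ respectively), while $\phi_{\pap B}\phi_{\mamep A}$ and $\phi_{\mam B}\phi_{\paep A}$ each receive contributions only from $p\ge n+2$ (producing $\phi_{\pamep{{}_nA_p}}$ and $\phi_{\maep{{}_nA_p}}$ respectively); the coefficients $g_{n,A,p}$ are inherited from Theorem \ref{sjcon}(1) through $\iota(e_B\ast e_A)=\iota(e_B)\iota(e_A)$, completing the proof.
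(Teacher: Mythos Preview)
Your proof is correct. For part~(1) and for part~(2) with $A\in\hbb$, your argument coincides with the paper's: decompose $e_B\ast e_A$ via $\iota$, then separate by multiplying with the weight idempotents $1_{\la^\pm}$ (and $1_{\nu^{\pm\ep}}$ in the $\hbb$ case).

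For part~(2) with $A\in\hbh$, your route differs genuinely from the paper's. The paper first multiplies by $1_{\la^+}$ to obtain the sum $\phi_{\pap B}\ast\phi_{\paep A}+\phi_{\pap B}\ast\phi_{\mamep A}$ explicitly, then invokes the Claim from Theorem~\ref{thmhhh}(2) to bound the support of $\phi_{\pap B}\ast\phi_{\paep A}$ inside $\{\paep{{}_n\!A_p}\mid p\le n\}\cup\{\maep{{}_n\!A_p}\mid p>n+1\}$; intersecting with the support of the sum (which lies in $\{\paep{{}_n\!A_p}\mid p\le n\}\cup\{\pamep{{}_n\!A_p}\mid p>n+1\}$) forces $\phi_{\pap B}\ast\phi_{\paep A}$ to be supported on $\{\paep{{}_n\!A_p}\mid p\le n\}$ alone, and subtraction gives the companion formula. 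You instead prove a new geometric fact: since $\la_{n+1}=0$ makes $F_n=F_{n+1}$ maximal isotropic, one has $d_{F,E}=d_{E,F}=d_{F,F'}=\dim(F_n\cap M_r)$, so the left sign of $(F,E)$ always equals the left sign of $(F,F')$. Combined with the Claim (which fixes the left sign of $(F,F')$ in terms of the left sign of $(E,F')$ and whether $p\le n$ or $p\ge n+2$), this pins down the $\cG(q)$-orbit of $(F,E)$ completely and hence determines the support of each of the four products directly, after which the coefficients are read off from $\iota(e_B\ast e_A)$ by disjointness. Your approach requires establishing one extra (easy) geometric identity but then avoids the intersection step; the paper's approach is more parsimonious, reusing only the Claim and the idempotent decomposition. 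Both are valid and yield the same result.
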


\begin{proof}Recall in this case $B=E^\th_{n,n+1}+\wga$ with $\wla=\ro(B),\wmu=\co(B)=\ro(A)$, and $\nu=\co(A)$.  

    (1) If $A\in\hhh$, then $\text{ro}(A)_{n+1}\neq0$ and $(\rw(\azero), \cw(\azero))=(\mu^\bullet, \nu^\bullet).$
Since $\la_{n+1}=0$ and $\text{ro}(A)_{n+1}=\text{co}(B)_{n+1}=2$, it follows that $a_{n+1,n+1}=2$ and $a_{n+1,p}=0$, for all $p\neq n+1$. Thus, the $(n+1,p)$ entry of $\apl$ is negative, for all $p\neq n+1$. Hence, \eqref{hneqn} becomes 
    $e_{{B}}* e_{A}=g_{ n,  A,{n+1}}
  e_{{_n A_{n+1}}}$.  
 Since the central entry of $_nA_{n+1}$ is 0, we have $\sO_{_nA_{n+1}}=\csO(\paep{_n A_{n+1}})\sqcup\csO(\mamep{_n A_{n+1}})$
 splits into two orbits and $\text{sgn}(\ur{_nA_{n+1}})=\text{sgn}(\ura)$, we have in $S_\bsq^\scd(n,r)$,  
 $$(\phi_{\pap{B}}+\phi_{\mam{B}})* \phi_{\azero}=g_{_{ B,  A,_n A_{n+1}}}
  (\phi_{\paep{_n A_{n+1}}}+\phi_{\mamep{_n A_{n+1}}}).$$
  Multiplying idempotents $1_{\la^+}$ or
$1_{\la^-}$ to the left hand side gives (1).

  (2) If $A\in\hbh\sqcup\hbb$, then the central entry of $_n A_{n+1}$ is $-2$. Thus, the term $e_{_n A_{n+1}}$in \eqref{hneqn}  is omitted.
 For every other term $\apl$, its central entry is 0. Thus, the corresponding $G$-orbits are all split. Applying  \eqref{sgnhap} to \eqref{hneqn} yields the following formulas in $S_\bsq^\scd(n,r)$, 
 \begin{equation}\label{bbheq1}
 \aligned
    (\phi_{\pap{B}}+\phi_{\mam{B}})*&(\phi_{\paep{A}}+\phi_{\mamep{A}})=\phi_{\pap{B}}*\phi_{\paep{A}}+\phi_{\mam{B}}*\phi_{\paep{A}}+\phi_{\pap{B}}*\phi_{\mamep{A}}+\phi_{\mam{B}}*\phi_{\mamep{A}}\\
&= 
        \sum_{1\leq p<n+1}g_{n,A,p} (\phi_{\paep{\apl}}+\phi_{\mamep{\apl}}) +\sum_{p>n+1}g_{n,A,p}(\phi_{\pamep{\apl}}+\phi_{\maep{\apl}}),
    \endaligned
\end{equation}

 If $A\in\hbb$, then both sides have the row weight $\la^+$ or $\la^-$ and the column weight $\nu^+$ or $\nu^-$.
 Multiplying idempotents $1_{\la^\pm}$ and $1_{\nu^\pm}$ to left and right hand sides of \eqref{bbheq1}, respectively, gives (2) in this case.


 Finally, let $A\in\hbh$.  Then both sides still have the row weight $\la^+$ or $\la^-$, but a sole column weight $\nu^\bullet$. We cannot use only weights to separate them. However, we may combined it with the Claim in the proof of Theorem \ref{thmhhh}(2).
 
Multiplying $1_{\la^+}$ to the left hand side of \eqref{bbheq1} yields that $\phi_{\pap{B}}*\phi_{\paep{A}}+\phi_{\pap{B}}*\phi_{\mamep{A}}$ is a linear combination of $\phi_{\cBA'}$ with 
\begin{equation*}\begin{aligned}
    \label{bbheq2}
    &\cBA'\in\{{\paep{\apl}} |p\in[1,n+1] \}\cup \{{\pamep{\apl}} |p>n+1 \}.
\end{aligned}\end{equation*} 
By the Claim in the proof of Theorem~\ref{thmhhh}, $\phi_{\pap{B}}*\phi_{\paep{A}}$ is a linear combination of $\phi_{\cBA'}$ with
\begin{equation*}\begin{aligned}
    \label{bbheq3}
    &{\cBA'}\in\{{\paep{\apl}} |p\in[1,n+1] \}\cup \{{\maep{\apl}} |p>n+1 \}.
\end{aligned}\end{equation*} 
Hence, we have $$\phi_{\pap{B}}*\phi_{\paep{A}}=
   \sum \limits_{1\leq p<n+1}\!\gba \phi_{\paep{\apl}},
   $$
and, consequently, we have $$\phi_{\pap{B}}* \phi_{\mamep{A}}=
    \sum \limits_{ p>n+1}\!\gba \phi_{\pamep{\apl}}.
    $$

The case for $\phi_{\mam{B}}*\phi_{\mamep{A}}$ and $\phi_{\mam{B}}* \phi_{\paep{A}}$ can be proved similarly.
\end{proof}

\section{Multiplication formulas in the $\bsq$-Schur algebra $S_\bsq^\scd(n,r)$, II}
In this section, we derive the multiplication in $S_\bsq^\scd(n,r)$ formulas arising from the one in Theorem~\ref{sjcon}(2): 
\begin{equation}\label{ecea}
    e_{E_{h+1,h}^\theta+\wga} * e_{A}=\delta_{\wga,\ro(A)-e^\theta_{h}}\sum \limits_{1 \leqslant p \leqslant N} \gca e_{\apu},
\end{equation}
where $h\in[1,n]$, $\ga\in\Lambda(n+1,r), A=(a_{i,j})\in \Xi_{N,2r}$, $\apu=A-E_{h,p}^\theta+E^{\theta}_{h+1,p}$, for all $p\in[1,N]$, and 
\begin{equation}\label{coefg'}\gca=\begin{cases}
   \bsq^{\sum_{j<p} a_{h+1,j}} \ggi{a_{h+1,p}+1} ,&\text{ if $h<n$ or }h=n, p\neq n+1\text{ and }a_{h,p}>0;\\
\bsq^{\sum\limits_{j<n+1} a_{n+1,j}}\big( \ggi{a_{n+1,n+1}+1}+\bsq^{\frac{a_{n+1,n+1}}2}\big) ,&\text{ if }h=n, p=n+1, \text{ and }a_{n,n+1}>0;\\
0,&\text{ if } a_{h,p}=0.
\end{cases}
\end{equation}

In \eqref{ecea}, let $ C=E_{h+1,h}^{\theta}+\wga$ and assume $\la=\text{ro}(B)$ and  $\mu=\text{ro}(A)=\text{co}(B)$. Then the upper right corner matrix $C^{\text{\normalsize $\llcorner$}}=0$ (and so, $\text{sgn}(C^{\text{\normalsize $\llcorner$}})=+$). Thus, by Proposition~\ref{split}, we have 
$${\mathcal{O}}_C=\begin{cases}
    \check{\mathcal{O}}(\pap{C})\sqcup\check{\mathcal{O}}(\mam{C}), &\text{ if }c_{n+1,n+1}=0,\\
    \qquad  \check{\mathcal{O}}(\dot{C}), &\text{ otherwise}
\end{cases}\quad\text{ and }\quad \begin{cases}
    C\in \bbb\sqcup \hhh, &\text{ if }h<n,\\
    C\in \hbb\sqcup \hhh,  &\text{ if }h=n.
\end{cases}$$

Since $\apu=A-E_{h,p}^\theta+E^{\theta}_{h+1,p}$, it follows that 
\begin{equation}
   \text{sgn}(\apu^{\text{\normalsize $\llcorner$}})=\begin{cases}
       \text{sgn}(\urcm), \ &\text{ if $h<n$ or $h=n$ and $1\leq p\leq n+1$},\\
       -\text{sgn}(\urcm), \ &\text{ if $h=n$ and $p>n+1$}.
   \end{cases}
\end{equation} 
If $a_{n+1,n+1}=0$ and $\ep=\text{sgn }(\urcm)$, then we have $G(q)$-orbits $\mathcal{O}_A,$ $\mathcal{O}_{\apu}$ splitting into $\cG(q)$-orbits:
\begin{equation}\label{oapu}\begin{aligned}
(1)\quad  \mathcal O_{A}&=\check{\mathcal O}(\paep{A})\sqcup \check{\mathcal O}(\mamep{A}),\\
(2)\  \mathcal O_{\apu}&=\begin{cases}
    \check{\mathcal O}(\paep{\apu})\sqcup \check{\mathcal O}(\mamep{\apu}), &\text{ if $h<n$ or $h=n$ and }1\leq p\leq n+1,\\
    \check{\mathcal O}(\pamep{\apu})\sqcup \check{\mathcal O}(\maep{\apu}), &\text{ if $h=n$ and }p>n+1.
\end{cases}
\end{aligned}\end{equation}


\subsection{The $C\in\hhh$ case.} Since ${\mathcal{O}}_C=\check{\mathcal{O}}(\dot{C})$ in this case and $\cw(\czero)=\mu^\bullet\in \Lambda^\bullet(n+1,r)$, it suffices to consider those $A$ in \eqref{ecea} with $A\in \hhh\sqcup \hbb \sqcup \hbh$. The following result is the counterpart of Theorem \ref{thmhhh}. Note that the two cases  $A\in \hbb$ and $A\in \hbh$ are quite different from the corresponding cases there.

\begin{thm}\label{thmchhh}Let $A,C\in \Xi_{N,2r}$ with $ C=E_{h+1,h}^{\theta}+\wga$, and assume $\wla=\ro(C)$, $ \wmu=\co(C)=\ro(A)$, and $\wnu=\co(A)$, where $h\in[1,n]$, and $\ga,\la, \mu, \nu\in \Lambda(n+1,r).$ Let  $\ep=\text{sgn }(\urcm)$. If $C\in\hhh$, then ${\mathcal{O}}_C=\check{\mathcal{O}}(\dot{C})$ and the following multiplication formulas hold in $S_\bsq^\scd(n,r)$:
    \begin{itemize}
        \item [(1)]If $A\in\hhh$, then ${\mathcal{O}}_A=\check{\mathcal{O}}(\dot{A})$ and
  $  \phi_{\czero} * \phi_{\azero}=
  \sum_{1 \leqslant p \leqslant N} \gca \phi_{ _h\!{\dot A}_{\overline {p}}}.$


\item [(2)] If $A\in\hbb\sqcup \hbh$,  then
 \begin{equation}\begin{aligned}\nonumber
 \phi_{\czero} * \phi_{\paep{A}}
=&\begin{cases}\sum\limits_{1\leq p\leq N}\gca \phi_{\paep{\apu}}, &\text{ if }h<n,\\
\sum\limits_{1\leq p<n+1}\!\gca \phi_{\paep{\apu}}+\frac{1}{2}{g'_{n,A, {\overline{n+1}}}}\phi_{_n {\dot A}_{\overline{n+1}}}+\!\sum\limits_{ p>n+1}\!\gca \phi_{\maep{\apu}}, &\text{ if }h=n,\end{cases}
\\
\phi_{\czero} * \phi_{\mamep{A}}
=&\begin{cases}
\sum\limits_{1\leq p\leq N}\gca \phi_{\mapmsgn}, &\text{ if }h< n,\\
\sum\limits_{1\leq p<n+1}\!\gca \phi_{\mamep{\apu}}+\frac{1}{2}{g'_{n,A,{\overline{n+1}}}}\phi_{_n {\dot A}_{\overline{n+1}}}+\hspace{-2mm}\sum\limits_{ p>n+1}\hspace{-2mm}\gca \phi_{\pamep{\apu}}, &\text{ if }h=n.
\end{cases} \end{aligned}\end{equation}
    \end{itemize}
Here all coefficients $\gca$ are given in \eqref{coefg'}.
\end{thm}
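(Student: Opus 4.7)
The strategy is to transport \eqref{ecea} to $S_\bsq^\scd(n,r)$ via the algebra embedding $\iota$ of Remark~\ref{notn} and decompose both sides according to the orbit splittings in Proposition~\ref{split} and \eqref{oapu}. Since $C\in\hhh$, $\sO_C=\csO(\dot C)$ is non-split and $\iota(e_C)=\phi_{\dot C}$; the central entry of $\apu$ equals $a_{n+1,n+1}$ except when $h=n$, $p=n+1$, where it equals $a_{n+1,n+1}+2$. Part~(1) is then immediate: $A\in\hhh$ forces every $\apu$ to have nonzero central entry, all orbits in sight are non-split, and \eqref{ecea} translates verbatim. For part~(2) with $A\in\hbb$, the hypothesis $\co(A)_{n+1}=0$ forces $a_{n,n+1}=0$, so by \eqref{coefg'} the coefficient $g'_{n,A,\overline{n+1}}$ vanishes and the $\frac{1}{2}$-term disappears; each remaining $\apu$ lies in $\hbb$, so \eqref{oapu} decomposes $\iota(e_{\apu})$ and the separation $\cw(\paep{A})=\nu^+\neq\nu^-=\cw(\mamep{A})$ from Lemma~\ref{crw} allows right multiplication by $1_{\nu^+}$ and $1_{\nu^-}$ to extract $\phi_{\dot C}\phi_{\paep{A}}$ and $\phi_{\dot C}\phi_{\mamep{A}}$ separately.

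The delicate case is part~(2) with $A\in\hbh$: here $\co(A)_{n+1}\neq 0$, so every $\phi_{\paep{A}}$, $\phi_{\mamep{A}}$, $\phi_{\paep{\apu}}$, $\phi_{\mamep{\apu}}$ shares the column weight $\nu^\bullet$, and at $h=n$, $p=n+1$ the matrix ${}_nA_{\overline{n+1}}$ has central entry $2$, lies in $\hhh$, and contributes the non-split basis element $\phi_{_n\!\dot A_{\overline{n+1}}}$. To identify which split terms appear in each product I will establish a claim parallel to the one in the proof of Theorem~\ref{thmhhh}: whenever $\fkm(F,E)=C$, $\fkm(E,F')=A$, $\fkm(F,F')=\apu$ and $(E,F')\in\csO({}^{\ep_1}\!A^{\ep_2})$, one has $(F,F')\in\csO({}^{\ep_1}\apu^{\ep_2})$ when $h<n$ or $h=n$, $p\le n$, and $(F,F')\in\csO({}^{-\ep_1}\apu^{\ep_2})$ when $h=n$, $p>n+1$. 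Since $C=E^\theta_{h+1,h}+\wga$ encodes $F\overset{1}\subset_h E$ by Corollary~\ref{Cormulti}, one has $E_n=F_n+\lr e$ when $h=n$; the argument then compares $F_n+F_{n+1}\cap F'_{n+1}$ with $E_n+E_{n+1}\cap F'_{n+1}$ and uses Proposition~\ref{new} to decide whether $e\in F'_{n+1}$, hence whether $d_{F,F'}=d_{E,F'}$ (no sign change) or $d_{F,F'}=d_{E,F'}+1$ (sign flip), exactly as in the proof of Theorem~\ref{thmhhh}.

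The coefficient of the non-split $\phi_{_n\!\dot A_{\overline{n+1}}}$ is pinned down by a $\mathbb Z/2$-symmetry. The formula $(\sigma f)(x,y):=f(\dot s_r^{-1}x,\dot s_r^{-1}y)$ defines an algebra automorphism $\sigma$ of $\fkF_{\cG(q)}(\sX\times\sX)$ whose fixed subalgebra is $\fkF_{G(q)}(\sX\times\sX)\cong\iota(S_{q,1}^\scb(n,r))$; it fixes every basis element coming from a non-split orbit, in particular $\phi_{\dot C}$ and $\phi_{_n\!\dot A_{\overline{n+1}}}$, and swaps $\phi_{\paep{A}}\leftrightarrow\phi_{\mamep{A}}$, $\phi_{\paep{\apu}}\leftrightarrow\phi_{\mamep{\apu}}$ and $\phi_{\maep{\apu}}\leftrightarrow\phi_{\pamep{\apu}}$ inside each split $G$-orbit. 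Applying $\sigma$ to $\phi_{\dot C}\phi_{\paep{A}}$ yields $\phi_{\dot C}\phi_{\mamep{A}}$ with the coefficient of $\phi_{_n\!\dot A_{\overline{n+1}}}$ preserved; since their sum is $\iota$ of the right-hand side of \eqref{ecea}, in which $\phi_{_n\!\dot A_{\overline{n+1}}}$ appears with coefficient $g'_{n,A,\overline{n+1}}$, each product must contribute exactly $\frac{1}{2}g'_{n,A,\overline{n+1}}\phi_{_n\!\dot A_{\overline{n+1}}}$. Combining the Claim with the same $\sigma$-symmetry applied to the split terms matches each individual split coefficient with $g'_{h,A,p}$ as stated. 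The main obstacle is the case-by-case verification of the Claim (transposing the analysis of Theorem~\ref{thmhhh} through the parametrization of Proposition~\ref{new} and Corollary~\ref{claim}) and checking that $\sigma$ correctly pairs up the $\pm$ labels at the split $\apu$; once these are in place, the formulas follow by specialization.
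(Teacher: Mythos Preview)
Your proof is correct and follows the paper's line for parts (1), (2) with $A\in\hbb$, and the Claim (your Claim is the paper's Claim~1, proved the same way by swapping the roles of $F$ and $E$ in the analysis of Theorem~\ref{thmhhh}).

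The one genuine difference is how you obtain the coefficient $\tfrac12 g'_{n,A,\overline{n+1}}$ of the non-split term $\phi_{_n\!\dot A_{\overline{n+1}}}$. The paper proves a Claim~2: for fixed $(F,F')\in\sO_{_n\!A_{\overline{n+1}}}$ it constructs an explicit bijection between $\sE=\{E\mid (F,E)\in\csO(\dot C),\ (E,F')\in\csO({}^+\!A^\ep)\}$ and $\sE'=\{E\mid (F,E)\in\csO(\dot C),\ (E,F')\in\csO({}^-\!A^{-\ep})\}$, by writing $E_n=F_n+\lr{v}$ with $v=\sum c_i v_{\tilde\la_n+i}+c_r v_r+c_{r+1}v_{r+1}$ and swapping the $v_r$- and $v_{r+1}$-coefficients. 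You instead invoke the algebra involution $\sigma$ on $\fkF_{\cG(q)}(\sX\times\sX)$ induced by the $\dot s_r$-action, observe that it fixes both $\phi_{\dot C}$ and $\phi_{_n\!\dot A_{\overline{n+1}}}$ while swapping $\phi_{\paep{A}}\leftrightarrow\phi_{\mamep{A}}$, and conclude that the two products share the same coefficient at $\phi_{_n\!\dot A_{\overline{n+1}}}$. Your argument is cleaner and structural, and in fact the paper's bijection is precisely the $\dot s_r$-action on the fibre over $(F,F')$, so the two arguments are secretly the same phenomenon; the paper's version has the advantage of being a direct counting proof that requires no appeal to an ambient symmetry. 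Note also that once the Claim is established, the split coefficients for $p\neq n+1$ are already forced by the decomposition of $\iota(e_C e_A)$ and linear independence, so invoking $\sigma$ there is unnecessary.
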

\begin{proof}
(1) Unlike the case in Theorem \ref{thmhhh}(1), we have all $\apu\in\hhh$ in this case. Thus, its proof is almost identical to the first paragraph of the corresponding proof there.

(2)
If $A\in \hbh\sqcup\hbb$, then $_nA_{\overline{n+1}}=A-E^\th_{n,n+1}+E^\th_{n+1,n+1}$ has 2 at the central entry and a negative $(n,n+1)$ entry if $A\in\hbb$. All other $\apu$ has a 0 central entry. Thus,
applying \eqref{oapu} to \eqref{ecea} yields
\begin{equation}\label{chhhah1}
   \phi_{\czero} * (\phi_{\paep{A}}+\phi_{\mamep{A}})=\begin{cases}
    \sum\limits_{1\leq p\leq N}\gca(\phi_{\paep{\apu}}+\phi_{\mamep{\apu}}), &\text{ if }h<n,\\
    \sum\limits_{1\leq p<n+1}\!\gca (\phi_{\paep{\apu}}+\phi_{\mamep{\apu}})
    +g'_{n,A,\overline{n+1}}\phi_{_n {\dot A}_{\overline{n+1}}}&\\
    \qquad\qquad\qquad
    +\sum\limits_{ p>n+1}\!\gca (\phi_{\maep{\apu}}+\phi_{\pamep{\apu}}),&\text{ if }h=n.
\end{cases}
\end{equation}

If $A\in\hbb$, then $g'_{n,A,\overline{n+1}}=0$. 
Thus, this case is proved by using an idempotent argument as in the proof of Theorem \ref{thmhhh}(2). 

 The case for $A\in\hbh$ is a bit more complicated just like its counterpart in Theorem \ref{thmhhh}. First, we need to modify the Claim there as follows:
\begin{center}
\begin{itemize}
\item[\textbf{Claim}\!\!]{\bf 1.} \; For $h\in[1,n]$ and $p\in[1,N]$, if $F, F^\prime, E\in \mathcal F_{n,r}^\jmath$ satisfy $\fkm({F},E)=C$, $\fkm(E,F^\prime)=A$ and $\fkm({F},F^\prime)=\apu$, then   \begin{equation}\nonumber
        (E,F^\prime)\in \mathcal {\check O}(^{\ep_1}\!A^{\ep_2}) \Longrightarrow
        (F,F^\prime)\in\begin{cases}
    \mathcal {\check O}(^{\ep_1}\!{\apu}^{\ep_2}), &\text{ if $h<n$ or $h=n$ and $p\leq n$};\\
    \mathcal {\check O}({_n {\dot A}_{\overline{n+1}}}), &\text{ if $h=n$, $p=n+1$ (and $a_{n,n+1}\geq1$)};\\
    \mathcal {\check O}(^{-\ep_1}\!{\apu}^{\ep_2}), &\text{ if $h=n$ and $p\geq n+2$}.\end{cases}
    \end{equation}
    \end{itemize}
    \end{center}
The proof of the claim is similar to that of the Claim in subsection 6.1 with the roles of $F$ and $E$ swapped.

 Next, we need to split the coefficient $g'_{n,A,\overline{n+1}}$ in \eqref{chhhah1}. 
\medskip
\begin{itemize}
\item[\textbf{Claim}\!\!]{\bf 2.} \; For $(F,F')\in\sO_{_n\!A_{\overline{n+1}}}$, the set $\sF=\{E\in\sF^\jmath_{n,r}\mid (F,E)\in\sO_C,(E,F')\in\sO_{_n\!A_{\overline{n+1}}}\}$, which defines $g'_{n,A,n+1}$,  is a disjoint union $\sF=\sE\sqcup\hat\sE$ of two subsets with equal cardinality, where
$$\aligned\sE&=
\{E\in\sF^\jmath_{n,r}\mid (F,E)\in\csO(\dot C),(E,F')\in\csO({}^+\!A^\ep)\},\\
\sE'&=
\{E\in\sF^\jmath_{n,r}\mid (F,E)\in\csO(\dot C),(E,F')\in\csO({}^-\!A^{-\ep})\}.\endaligned$$
 \end{itemize}

\medskip    
Indeed, the disjoint union is clear. To see the assertion of equal cardinality, we construct a bijection 
$$f:\sE\longrightarrow\sE',\;\;\; E\longmapsto E'.$$
We now follow the notation used in Proposition \ref{new}. Thus, $F_i=\lr{v_1,\ldots,v_{\widetilde\al_i}}$ $(\al=\wla)$ and $F'=F^{A'}$, where
$A'=_n\!A_{\overline{n+1}}$ has central entry 2.
For $E\in\sF$, by Corollary \ref{claim}, we have $E_n=F_n+\lr{v}$, where $v$ has the form $v=c_1v_{\tla_{n}+1}+\cdots+c_mv_{\tla_n+m}+c_{r}v_{r}+c_{r+1}v_{r+1}$ with $m=a_{n+1,1}+\cdots+a_{n+1,n}$ and either $c_r=0$ or $c_{r+1}=0$, but not both 0. 

Suppose now $E\in\sE$ with $c_r\neq0$ (we may assume $c_r=1$)  and define
$E_{n+1}$ by removing $v_{r+1}$ from $F_{n+1}$, and $E_i=F_i$, for all $i\neq n, n+1$. 
 Then $E_n=F_n+\lr{v}\subset E_{n+1}$. Note that $E$ is an $n$-step isotropic flag since 
$$E_n^\perp=F_n^\perp\cap\lr{v}^\perp= F_{n+1}\cap\lr{v_k,v\mid k\in[1,N]\setminus\{r,r+1\}}=\lr{v_k,v\mid k\in[1,\widetilde{\al}_{n+1}]\setminus\{r+1\}}=E_{n+1}.$$
Let $v'=c_1v_{\tla_{n}+1}+\cdots+c_mv_{\tla_n+m}+v_{r+1}$ and define $E'\in\sF$ by setting 
$$E'_n=F_n+\lr{v'}, \;E'_{n+1}=\lr{v_k, v'\mid k\in[1,\widetilde\al_{n+1}]\setminus\{r\}},\;\text{ and }\;E'_i=F_i, \;\forall i\neq n.$$ Then one checks easily that $E'$ is $n$-step isotropic and $E'\in\sE'$. It is clear that the map $E\mapsto E'$ is bijective, proving Claim 2.

By Claim 1, $ \phi_{\czero} * \phi_{\paep{A}}$ is a linear combination of $\phi_{\paep{\apu}}$ if $h<n$, or a linear combination of $\phi_{\paep{\apu}}$, $\phi_{\maep{\apu}}$ and $\phi_{_n {\dot A}_{\overline{n+1}}}$ if $h=n$. By Claim 2, the coefficient $g'_{n,A,n+1}$ is halved. The argument for $\phi_{\czero} * \phi_{\mamep{A}}$ is similar. The proof is complete.
\end{proof}

\subsection{The case $C\in\bbb$ (\!$\implies h<n$).} In this case, ${\mathcal{O}}_C=\check{\mathcal{O}}(\pap{C})\sqcup\check{\mathcal{O}}(\mam{C}).$ Enough to consider those $A$ with $A\in\bbb\sqcup\bbh.$ This case is entirely similar to Theorem \ref{th6.3}. We omit the proof.

\begin{thm} Maintain the same assumptions on $A,C,h,\ga,\la,\mu,\nu,$ and $\ep=\text{sgn }(\urcm)$ as in Theorem~\ref{thmchhh}, and assume $C=E_{h+1,h}^{\theta}+\check \gamma\in\bbb$. Then $h<n$, and, for $A\in\bbh\sqcup\bbb$, the following multiplication formulas hold in   $S_\bsq^\scd(n,r)$:
\begin{equation}\begin{aligned}\nonumber
    \phi_{\pap{C}} * \phi_{\paep{A}}&=\sum\limits_{1\leq p\leq N}\gca \phi_{^{+}\!{(\apu)}^{\ep}}, \qquad
    \phi_{\mam{C}}*\phi_{\mamep{A}}=
    \sum\limits_{1\leq p\leq N}\gca \phi_{^{-}\!{(\apu)}^{-\ep}}.\\
    \end{aligned}\end{equation}
\end{thm}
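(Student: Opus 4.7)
The plan is to adapt the argument of Theorem~\ref{th6.3} line by line, with $B$ and $\apl$ replaced by $C$ and $\apu$. The first step is the structural observation $h<n$: if $h=n$, then $E^\theta_{n+1,n}=E_{n+1,n}+E_{n+1,n+2}$ contributes at least $2$ to the $(n+1)$-st row sum, so $\ro(C)_{n+1}=\la_{n+1}\geq 2$, contradicting $C\in\bbb$ (which forces $\la_{n+1}=0$).

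Next I would verify stability of the splitting category. Since $h<n$, the modification $\apu=A-E^\theta_{h,p}+E^\theta_{h+1,p}$ touches only rows $h,h+1,N-h,N+1-h$, all distinct from $n+1$; column sums are preserved since $E^\theta_{h,p}$ and $E^\theta_{h+1,p}$ have identical column sums. Therefore $a_{n+1,n+1}$, $\co(A)_{n+1}$ and the classification $A\in\bbb$ (resp.\ $A\in\bbh$) are all inherited by each $\apu$. Moreover, by \eqref{sgnhapsgna} we have $\text{sgn}(\apu^{\text{\normalsize $\llcorner$}})=\text{sgn}(\urcm)=\ep$ (since $h<n$), so \eqref{oapu}(2) yields the uniform decomposition
\[
\sO_{\apu}=\csO(\paep{\apu})\sqcup\csO(\mamep{\apu}),\qquad 1\leq p\leq N.
\]

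With these splittings in place, $C\in\bbb$ gives $e_C=\phi_{\pap{C}}+\phi_{\mam{C}}$, and for $A\in\bbb\sqcup\bbh$ we have $e_A=\phi_{\paep{A}}+\phi_{\mamep{A}}$ and $e_{\apu}=\phi_{\paep{\apu}}+\phi_{\mamep{\apu}}$. Substituting into the type-$B$ formula \eqref{ecea}, viewed inside $S^\scd_\bsq(n,r)$ via the embedding $\iota$ of Remark~\ref{notn}, gives
\begin{equation*}
(\phi_{\pap{C}}+\phi_{\mam{C}})*(\phi_{\paep{A}}+\phi_{\mamep{A}})=\sum_{1\leq p\leq N}\gca\bigl(\phi_{\paep{\apu}}+\phi_{\mamep{\apu}}\bigr).
\end{equation*}
Now I would eliminate the cross terms by weight inspection: $\cw(\pap{C})=\mu^+\neq\mu^-=\rw(\mamep{A})$ forces $\phi_{\pap{C}}*\phi_{\mamep{A}}=0$ by \eqref{phiAB}, and symmetrically $\phi_{\mam{C}}*\phi_{\paep{A}}=0$. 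Finally, separating the two surviving products by the idempotent argument -- left multiplication by $1_{\la^+}$ and $1_{\la^-}$ (note $\rw(\pap{C})=\la^+$, $\rw(\mam{C})=\la^-$, $\rw(\paep{\apu})=\la^+$, $\rw(\mamep{\apu})=\la^-$) -- yields the two asserted formulas.

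The potential obstacle is essentially bookkeeping: one must confirm that for $A\in\bbh$ no sole-column-weight ambiguity arises (as it did in Theorem~\ref{thmhhh}(2) for $A\in\hbh$). This is automatic here because the column weights $\cw(\paep{\apu})$ and $\cw(\mamep{\apu})$ differ (both equal the corresponding weights of $\paep{A},\mamep{A}$ by the preservation of the column structure under $\apu$), so a right-multiplication by $1_{\nu^\ep}$ or $1_{\nu^{-\ep}}$ (in the $\bbb$ case) or a left-multiplication by $1_{\la^\pm}$ (uniformly) suffices without needing the geometric Claim used in Theorem~\ref{thmhhh}. Hence no new obstacle appears and the proof concludes as above.
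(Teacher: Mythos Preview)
Your proposal is correct and follows essentially the same approach as the paper, which explicitly says ``This case is entirely similar to Theorem~\ref{th6.3}. We omit the proof.'' One small slip: for $A\in\bbh$ the column weights $\cw(\paep{\apu})$ and $\cw(\mamep{\apu})$ do \emph{not} differ (both equal $\nu^\bullet$), but this is harmless since, as you correctly note, left multiplication by $1_{\la^\pm}$ separates the two products uniformly in both cases.
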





\subsection{The case $C\in\hbb$ (\!$\implies h=n$).} We have in this case ${\mathcal{O}}_C=\check{\mathcal{O}}(\pap{C})\sqcup\check{\mathcal{O}}(\mam{C})$ with  $(\text{rw}(\pap{C}), \text{cw}(\pap{C}))=(\la^\bullet,\mu^+)$ and $(\text{rw}(\mam{C}), \text{cw}(\mam{C}))=(\la^\bullet,\mu^-)$. Thus, enough to consider those $A$ with $A\in\bbb\sqcup\bbh.$

\begin{thm}\label{thchbb} Let $A,C\in\Xi_{N,2r}$ and $\ep=\text{\rm sgn }(\urcm)$, and assume $\co(C)=\ro(A)$, $C\in\hbb$, and $C-E^\th_{n+1,n}$ is diagonal.  Then, for $A\in\bbb\sqcup\bbh$, $\mathcal O_{A}=\check{\mathcal O}(\paep{A})\sqcup \check{\mathcal O}(\mamep{A})$ and the following multiplication formulas hold in $S_\bsq^\scd(n,r)$:
  \begin{equation}\begin{aligned}\nonumber
    \phi_{\pap{C}} * \phi_{\paep{A}}&=
    \sum\limits_{1\leq p<n+1}\gca \phi_{^{+}\!{(\apu)}^{\ep}}+\frac{1}{2}{g'_{n,A,\overline{n+1}}}\phi_{_n {\dot A}_{\overline{n+1}}}+\hspace{-3mm}\sum\limits_{p>n+1}\gca \phi_{^{-}\!{(\apu)}^{\ep}},\quad \\
    \phi_{\mam{C}}*\phi_{\mamep{A}}&=
    \sum\limits_{1\leq p<n+1}\gca \phi_{^{-}\!{(\apu)}^{-\ep}}+\frac{1}{2}{g_{n,A, {\overline{n+1}}}}\phi_{_n {\dot A}_{\overline{n+1}}}+\hspace{-3mm}\sum\limits_{p>n+1}\gca \phi_{^{+}\!{(\apu)}^{-\ep}}.
    \end{aligned}\end{equation}
\end{thm}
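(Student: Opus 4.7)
The strategy mirrors the proof of Theorem~\ref{thmchhh}(2), adapted to the fact that now both $\sO_C$ and $\sO_A$ are split $\cG$-orbits. First I would lift the type~$B$ identity \eqref{ecea} with $h=n$ into $S_\bsq^\scd(n,r)$ via the embedding $\iota$ of Remark~\ref{notn}: writing $e_C=\phi_{\pap C}+\phi_{\mam C}$ and $e_A=\phi_{\paep A}+\phi_{\mamep A}$, and decomposing each $e_{\apu}$ by \eqref{sgnhapsgna}--\eqref{oapu}. For $p\in[1,n]$, $\apu$ has zero central entry with $\operatorname{sgn}(\apu^{\llcorner})=\ep$, giving $\phi_{\paep{\apu}}+\phi_{\mamep{\apu}}$; for $p\in[n+2,N]$ the sign flips, giving $\phi_{\pamep{\apu}}+\phi_{\maep{\apu}}$; for $p=n+1$ the central entry jumps to $2$, so ${}_nA_{\overline{n+1}}\in\hhh$ and the term is $\phi_{_n\dot A_{\overline{n+1}}}$. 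This produces a master identity analogous to \eqref{chhhah1}, whose left-hand side collapses to $\phi_{\pap C}*\phi_{\paep A}+\phi_{\mam C}*\phi_{\mamep A}$ via the column/row weight orthogonality \eqref{phiAB} (since $\mu^+\neq\mu^-$).

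The two cases now split according to whether column weights separate the identity. When $A\in\bbb$, $\nu_{n+1}=0$ forces $a_{n,n+1}=0$, hence $g'_{n,A,\overline{n+1}}=0$ and the middle term vanishes; every $\apu$ lies in $\hbb$ and the terms on either side distinguish themselves by column weights $\nu^\ep$ versus $\nu^{-\ep}$ (Lemma~\ref{crw}(3)), so right-multiplying by $1_{\nu^\pm}$ separates the identity into the two asserted formulas. When $A\in\bbh$, every $\apu$ (for $p\ne n+1$) instead lies in $\hbh$ with column weight $\nu^\bullet$ (Lemma~\ref{crw}(4)), so idempotents cannot separate the two products. Instead I would invoke Claim~1 from the proof of Theorem~\ref{thmchhh}: its verification uses only the relation $F\overset1\subset_n E$ (equivalent to $C-E^\th_{n+1,n}$ being diagonal) and therefore transfers verbatim, showing that $\phi_{\pap C}*\phi_{\paep A}$ is supported on the basis subset $\{\phi_{\paep{\apu}}:p\le n\}\cup\{\phi_{_n\dot A_{\overline{n+1}}}\}\cup\{\phi_{\maep{\apu}}:p\ge n+2\}$, symmetrically for $\phi_{\mam C}*\phi_{\mamep A}$. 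Matching coefficients in the master identity then forces all off-middle coefficients to equal $\gca$.

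The main obstacle is the distribution of the middle coefficient $g'_{n,A,\overline{n+1}}$. For $(F,F')\in\sO_{{}_nA_{\overline{n+1}}}$, the set $\sF=\{E:\fkm(F,E)=C,\fkm(E,F')=A\}$ has cardinality $g'_{n,A,\overline{n+1}}$ and decomposes as $\sF=\sE_1\sqcup\sE_2$, where $\sE_1=\{E\in\sF:(F,E)\in\csO(\pap C),\,(E,F')\in\csO(\paep A)\}$ and $\sE_2$ is defined symmetrically with both signs flipped; the two mixed possibilities are empty because they would place $E$ simultaneously in the disjoint $\cG$-orbits $\csO_{\mu^+}$ and $\csO_{\mu^-}$ of $\sX$. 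Adapting Claim~2 from the proof of Theorem~\ref{thmchhh}, the involution $v_r\leftrightarrow v_{r+1}$ applied to the generator of $E_n/F_n$ together with the complementary adjustment in $E_{n+1}$ provides a bijection $\sE_1\to\sE_2$: a direct dimension count shows that $\dim(E_{n+1}\cap M_r)$ changes by exactly one under the swap, simultaneously flipping the signs of $(F,E)$ and of $(E,F')$. Hence $|\sE_1|=|\sE_2|=\tfrac12 g'_{n,A,\overline{n+1}}$, yielding the asserted $\tfrac12$ coefficients and completing the proof.
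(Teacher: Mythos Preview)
Your proposal is correct and follows essentially the same route as the paper's proof: lift \eqref{ecea} via $\iota$, kill the cross terms $\phi_{\pap C}*\phi_{\mamep A}$ and $\phi_{\mam C}*\phi_{\paep A}$ by row/column weight mismatch, then separate by idempotents when $A\in\bbb$ and by Claims~1 and~2 of Theorem~\ref{thmchhh} when $A\in\bbh$. Your treatment of the $\tfrac12$-splitting is in fact more careful than the paper's, which simply invokes Claim~2 without comment; you correctly observe that here, unlike in Theorem~\ref{thmchhh}, the orbit $\sO_C$ is also split, so one must check that the bijection of Claim~2 flips the sign of $(F,E)$ simultaneously with that of $(E,F')$ --- and your observation that both signs are governed by $\dim(E_n\cap M_r)$ (since $\mu_{n+1}=0$ forces $E_n=E_{n+1}$ maximal isotropic) is exactly the point needed.
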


\begin{proof}
 If $A\in\bbb$, then $(\text{rw}(\paep{A}), \text{cw}(\paep{A}))=(\mu^+, \nu^\ep)$, $(\text{rw}(\mamep{A}), \text{cw}(\mamep{A}))=(\mu^-, \nu^{-\ep})$, and the central (or $(n+1)$th) component of $\co(A)$ is 0. Thus, the $(n,n+1)$ entry  of $_n { A}_{\overline{n+1}}$ is negative. Hence, $\phi_{_n {\dot A}_{\overline{n+1}}}$ does not occur in this case and so we will omit the $p=n+1$ term.

By \eqref{oapu}(2), \eqref{ecea} has the following decomposition in  $S_\bsq^\scd(n,r)$,
    \begin{equation}\label{chbb1}
    (\phi_{\pap{C}}+\phi_{\mam{C}})*(\phi_{\paep{A}}+\phi_{\mamep{A}})=
        \sum\limits_{1\leq p<n+1}\gca (\phi_{\paep{\apu}}+\phi_{\mamep{\apu}})
      +\hspace{-3mm}\sum\limits_{p>n+1}\gca(\phi_{\pamep{\apu}}+\phi_{\maep{\apu}}).
\end{equation}
Clearly, LHS=$\phi_{\pap{C}}* \phi_{\paep{A}}+\phi_{\mam{C}} * \phi_{\mamep{A}}$ since
 $\phi_{\pap{C}} * \phi_{\maep{A}}= \phi_{\mam{C}} * \phi_{\paep{A}}=0.$
 Moreover, every natural basis element $\phi_{\check{\mathbb A}^\prime}$ on the right hand side satisfies cw$(\check{\mathbb A}^\prime)=\nu^\ep$ or $\nu^{-\ep}$.
Multiplying idempotents $1_{\nu^\ep}(\text{resp., }1_{\nu^{-\ep}})$ to the right hand sides of \eqref{chbb1} yields the required formulas for $\phi_{\pap{C}} * \phi_{\paep{A}}(\text{resp., }\phi_{\mam{C}}*\phi_{\mamep{A}})$.

 If $A\in\bbh$, then $(\text{rw}(\paep{A}), \text{cw}(\paep{A}))=(\mu^+, \nu^\bullet)$ and $(\text{rw}(\mamep{A}), \text{cw}(\mamep{A}))=(\mu^-, \nu^{\bullet}).$
By \eqref{oapu}(2), \eqref{ecea} has the following decomposition in  $S_\bsq^\scd(n,r)$,
    \begin{equation}\begin{aligned}\label{chbb2}
    (\phi_{\pap{C}}+\phi_{\mam{C}})*(\phi_{\paep{A}}+\phi_{\mamep{A}})=&
        \sum\limits_{1\leq p<n+1}\gca (\phi_{\paep{\apu}}+\phi_{\mamep{\apu}})\\
      &+{g'_{n, A,{\overline{n+1}}}}\phi_{_n {\dot A}_{\overline{n+1}}}+\hspace{-3mm}\sum\limits_{p>n+1}\gca(\phi_{\pamep{\apu}}+\phi_{\maep{\apu}}),
\end{aligned}\end{equation}
where the central entry of $\apu$ is $0$ except $_n { A}_{\overline{n+1}}$ which is $2.$ 

Again, LHS=$\phi_{\pap{C}}* \phi_{\paep{A}}+\phi_{\mam{C}} * \phi_{\mamep{A}}$ since
 $\phi_{\pap{C}} * \phi_{\maep{A}}= \phi_{\mam{C}} * \phi_{\paep{A}}=0.$ We cannot only use weights to separate the rest two items. Instead, by Claim 1 and Claim 2 in the proof of Theorem~\ref{thmchhh}(2)\&(3),  $\phi_{\pap{C}} * \phi_{\paep{A}}$ is a linear combination of $\phi_{\check{\mathbb A}^\prime}$ with 
\begin{equation}\begin{aligned}\nonumber
    {\check{\mathbb A}^\prime}\in \{{\paep{\apl}} \mid p\in[1,n] \}\cup \{{_n {\dot A}_{\overline{n+1}}}\}\cup\{{\maep{\apl}} \mid p>n+1 \}, 
\end{aligned}\end{equation} which gives the first equation in (2).
The case for $\phi_{\mam{C}}*\phi_{\mamep{A}}$ can be similarly obtained.
\end{proof}

\begin{rem}\label{error2}
    In \cite[Prop.~4.3.2]{FL}, Z. Fan and the second-named author gave three sets of multiplication formulas in $S^\scd_\bsq(n,r)$ which do not cover all the cases discussed in \S\S6--7. For example, the case when $A\in\hbh$ is not mentioned at all. Even for those presented there, the $h=n$ subcase is also missing.
\end{rem}

\end{document}